\documentclass{dhbenelux}

\usepackage[T1]{fontenc}
\usepackage{tgpagella}
\usepackage{tgheros}
\usepackage{microtype}
\usepackage{graphicx}
\usepackage{amsthm}
\usepackage{tikz-cd}

\usepackage{mathtools, mathrsfs}
\usepackage{amsthm} 
\usepackage{braket}
\usepackage[all]{xy}

\usepackage{enumitem}

\newtheorem{theorem}{Theorem}[section]
\newtheorem{proposition}[theorem]{Proposition}
\newtheorem{lemma}[theorem]{Lemma}

\newtheorem{remark}[theorem]{Remark}
\newtheorem{definition}[theorem]{Definition}
\newtheorem{example}[theorem]{Example}
\newtheorem{notation}[theorem]{Notation}
\newtheorem{corollary}[theorem]{Corollary}

\newtheorem*{theorem*}{Theorem}

\newcommand{\cA}{\mathcal{A}}
\newcommand{\cB}{\mathcal{B}}

\newcommand{\cF}{\mathcal{F}}
\newcommand{\cG}{\mathcal{G}}
\newcommand{\cH}{\mathcal{H}}

\newcommand{\cL}{\mathcal{L}}
\newcommand{\cM}{\mathcal{M}}
\newcommand{\cN}{\mathcal{N}}

\newcommand{\cS}{\mathcal{S}}
\newcommand{\cT}{\mathcal{T}}

\newcommand{\R}{\mathbb{R}}
\newcommand{\C}{\mathbb{C}}

\newcommand{\HP}[1]{\mathbb{H}P^{#1}}

\newcommand{\Hom}{\mathrm{Hom}}

\usepackage{hyperref}

\author{Christos T. Aravanis$^{\spadesuit}$ \and Ioannis S. Gkeneralis$^{\ast}$}

\title{Sheaves on quoric manifolds}
\date{}

\begin{document}
\maketitle

\begin{abstract}
Quoric manifolds are quaternionic analogues of quasitoric manifolds introduced by Hopkinson. Given a quoric manifold over a simple polytope, we study the stratification induced by the faces of the polytope. We then show that the equivariant derived category of constructible sheaves with respect to the face stratification is equivalent to the derived category of constructible DG-modules over a DG-algebra defined on the orbit space. Finally, we prove that the associated DG-algebra is formal.
\end{abstract}

\medskip

\noindent ${}^{\spadesuit}$Sheffield International College, University of Sheffield, England\\
\phantom{${}^{\bullet}$}Email: \href{mailto:c.aravanis@sheﬃeld.ac.uk}{\texttt{c.aravanis@sheﬃeld.ac.uk}}\medskip

\noindent ${}^{\ast}$Department of Mathematics, Aristotle University of Thessaloniki, Greece\\
\phantom{${}^{\bullet}$}Email: \href{mailto:igkeneralis@math.auth.gr}
{\texttt{igkeneralis@math.auth.gr}}\\

\noindent \textbf{Mathematics Subject Classification (2020):} 57S25, 32S60, 14F08, 55N91,  57M60\\

\noindent\textbf{Keywords:} quoric manifolds, equivariant sheaves, constructible sheaves, equivariant derived categories, finite spaces, DG-algebras, formality.

\tableofcontents
\section{Introduction} There were two motivations for this work. The first was to study quoric manifolds \cite{hopkinson2012quoric} from a sheaf-theoretic perspective and in particular, to understand how their equivariant topology is reflected in the associated categories of equivariant and constructible sheaves. The second was to investigate whether Lunts' techniques for toric varieties \cite{Lunts1995} can be applied to quoric manifolds. In particular, we wanted to determine whether the equivariant derived category of constructible sheaves on quoric manifolds admits a description in terms of the geometry of the orbit space and the isotropy data of the underlying group action.

In this paper we show that the equivariant derived category of constructible sheaves on a quoric manifold is equivalent to the derived category of constructible DG-modules over a sheaf of DG-algebras on the face poset of the orbit polytope. We further show that this sheaf of DG-algebras is formal, that is, quasi-isomorphic to its cohomology sheaf.

\subsection{Equivariant derived categories of sheaves}

Sheaf theory was introduced by Leray in the 1940s and developed in the following decade. Its basic principle is to organise local data on a space $X$ into global invariants: sheaves on $X$ form an abelian category $\mathrm{Sh}(X)$, and the global sections functor assigns to each sheaf its global invariants. This functor, and more generally the direct image $f_*$ along a continuous map $f$, is only left exact, and its failure to preserve short exact sequences is measured by higher direct images, in particular by sheaf cohomology. These higher terms are organised in the derived category, obtained from complexes of sheaves by localising at quasi-isomorphisms. There they become single functors $f_*$, which together with $f^*$, $\otimes^L$, $f_!$, $f^!$ and Verdier duality form the six operations \cite{kashiwara2013sheaves}.

This formalism is especially fruitful when $X$ carries extra structure. If $X$ is stratified, one restricts to complexes whose cohomology sheaves are constructible, that is, locally constant of finite rank along the strata. For suitable stratifications this subcategory is stable under the six operations and relates the local topology of the strata to the global structure of $X$. If a group $G$ acts on $X$, one obtains the abelian category $\mathrm{Sh}_G(X)$ of $G$-equivariant sheaves. Its naive derived category does not retain equivariant cohomology: for connected $G$ and $X=\mathrm{pt}$ it gives $D(\mathrm{Vect})$, whereas the equivariant cohomology of a point is $H^*(BG)$. This was addressed by Bernstein and Lunts \cite{Bernstein1994}, who define $D_G(X)$ by resolving $X$ by the free $G$-spaces $X\times E$ and descending to the quotients $X\times_G E$, where $E$ approximates $EG$. The resulting category carries the functors $f^*,f_*,f^!,f_!$ and the usual adjunctions. Combining the two structures gives the equivariant derived category $D_{G,c}(X)$ of constructible sheaves.

Toric varieties are the model case in which the group action, the stratification and the equivariant derived category are all controlled by combinatorics. A normal toric variety $X=X_\Sigma$ is a complex algebraic variety with an action of the algebraic torus $T^n:=(\C^\ast)^n$ and a dense open orbit. It has finitely many $T^n$-orbits $O_\sigma$, in bijection with the cones $\sigma$ of the fan $\Sigma$, with $\dim O_\sigma=n-\dim\sigma$. Lunts \cite{Lunts1995} described the bounded $T^n$-equivariant derived category $D^b_{T^n,c}(X)$ of sheaves constructible with respect to the orbit stratification algebraically. It is equivalent to the derived category of DG-modules over a differential graded algebra, which is the Ext-algebra of the direct sum of the simple equivariant perverse sheaves attached to the orbits, and which is formal. Lunts also conjectured that an analogous description holds for a complex reductive group acting on a projective variety with finitely many orbits; this conjecture was extended in a number of directions \cite{Guillermou05, Balthasar09, Schnurer2011, Joshua22}

In this paper we take a first step towards a real-topological analogue of this conjecture. We replace the complex reductive group by the compact Lie group $Q^n$, a product of copies of the group $Q\cong\mathrm{SU}(2)$ of unit quaternions, and the projective variety by a quoric manifold $M^{4n}$, the quaternionic analogue of a quasitoric manifold the latter being a topological analogue of a toric variety \cite{DJ91}. This is a $4n$-dimensional manifold with an action of $Q^n$ whose orbit space is a simple $n$-polytope $P^n$. As in the toric case, the faces of $P^n$ induce a finite stratification of $M^{4n}$. Our results are an equivalence of $D^b_{Q^n,c}(M^{4n})$ with the derived category of DG-modules over a sheaf of DG-algebras on the face poset of $P^n$ (Theorem~\ref{thm:main-theorem}), and the formality of this sheaf of DG-algebras (Theorem~\ref{prop:formality}). 

\subsection{Quoric manifolds, stratifications and the Borel construction}

Quoric manifolds were introduced by Hopkinson \cite{hopkinson2012quoric}, building on earlier work of Scott \cite{Scott95}, as quaternionic analogues of the quasitoric manifolds of Davis and Januszkiewicz \cite{DJ91}, which are themselves topological analogues of toric varieties \cite{fulton1993toric}. A quoric manifold is a smooth $4n$-dimensional manifold with a smooth locally regular action of $Q^n=(S^3)^n$ whose orbit space is a simple $n$-dimensional polytope (Definition~\ref{def:quoric}). This contrasts with a $2n$-dimensional quasitoric manifold, which carries an action of the compact torus $T^n=(S^1)^n$. The identification of the orbit space with the polytope is understood to preserve the face structure.

As in toric topology \cite{BuchstaberPanov2015}, the face structure of the orbit polytope records the isotropy data of the action (Section~\ref{sec:quoric-mnflds}). Since $Q^n$ is non-abelian, stabilisers are recorded up to conjugacy. Over the relative interior of each face their conjugacy class is constant, although the stabiliser subgroups need not be equal, and distinct faces may carry the same conjugacy class. Thus the partition by stabiliser conjugacy classes need not have connected pieces.

Let $\pi\colon M^{4n}\to P^n$ be the orbit map, let $\cF(P^n)$ be the set of nonempty faces of $P^n$ including $P^n$, and for a face $F$ with relative interior $\mathring F$ put $M_F:=\pi^{-1}(\mathring F)$. Then $M^{4n}=\bigsqcup_{F\in\cF(P^n)}M_F$. We encode this decomposition by a finite topological space: the face poset $(\cF(P^n),\preceq)$, ordered by inclusion, with the Alexandrov topology whose open sets are the up-sets (Section~\ref{sec:finite-spaces}). Denote it by $\cF_\preceq$. The map $\rho_P\colon P^n\to\cF_\preceq$, $\rho_P(x)=F$ for $x\in\mathring F$, is continuous and open, and so is
\begin{equation*}
\pi_{\cF}:=\rho_P\circ\pi\colon M^{4n}\longrightarrow\cF_\preceq,
\end{equation*}
whose fibres are precisely the subsets $M_F$. Our first main result is the following.

\begin{theorem*}[Theorem~\ref{thm:quoric-stratified}]
Let $M^{4n}$ be a quoric manifold over a simple polytope $P^n$. Then $\pi_{\cF}\colon M^{4n}\to\cF_\preceq$ defines a poset-stratified space, and the decomposition $M^{4n}=\bigsqcup_{F}\pi_{\cF}^{-1}(F)$ is a finite stratification of $M^{4n}$.
\end{theorem*}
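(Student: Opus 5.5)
The plan is to check the definition of a poset-stratified space directly: a space $X$ with a continuous map $s\colon X\to P$ to a poset carrying the Alexandrov topology. The finite-stratification part then comes from the topology of the individual fibres $M_F$ and the frontier condition. The proof splits into continuity of $\pi_{\cF}$, and properties of the fibres and their closures.

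\textbf{Step 1 (continuity).} The orbit map $\pi\colon M^{4n}\to P^n$ is continuous; it is a quotient map for a compact group action, hence open. So it suffices to show that $\rho_P$ is continuous. An open set of $\cF_\preceq$ is an up-set $U$. Then
\[
\rho_P^{-1}(U)=\bigcup_{F\in U}\mathring F
=P^n\setminus\bigcup_{G\notin U} G ,
\]
where the second equality holds because $U$ is an up-set: a point lies in some $G\notin U$ exactly when its carrier face is $\preceq G$, hence is not in $U$. This is a complement of a finite union of closed faces, so it is open. The same description, $\rho_P^{-1}(\mathrm{up}(F))=P^n\setminus\bigcup_{G\not\succeq F}G$, also gives openness of $\rho_P$, since images of small open sets around $x\in\mathring F$ are exactly $\mathrm{up}(F)$. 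Composing, $\pi_{\cF}$ is continuous and open, and its fibres are the sets $M_F$. This already gives the poset-stratified structure.

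\textbf{Step 2 (strata are manifolds, finiteness).} There are finitely many faces, so the decomposition is finite. For each stratum I would use the local standard model of a quoric manifold from Definition~\ref{def:quoric} and Section~\ref{sec:quoric-mnflds}. Near a point over $\mathring F$ with $\operatorname{codim} F=k$, $M^{4n}$ is $Q^n$-equivariantly modelled on $\mathbb{H}^k\times Q^{n-k}\times\R^{n-k}$, up to an automorphism of $Q^n$, with orbit map given by quaternionic norms. The preimage of $\mathring F$ is then $\{0\}\times Q^{n-k}\times\R^{n-k}$. So $M_F$ is a locally closed submanifold of dimension $4(n-k)$, and each of its connected components meets the fibre over a contractible face interior. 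To see that $M_F$ is connected, I would note that $M_F\to\mathring F$ has fibre $Q^n/\mathrm{Stab}\cong$ a product of copies of $S^3$, which is connected, over the contractible base $\mathring F$.

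\textbf{Step 3 (frontier condition).} I need $\overline{M_F}=\bigcup_{G\preceq F}M_G=\pi^{-1}(F)$. The inclusion $\subseteq$ follows from continuity of $\pi$ and closedness of $F$. For $\supseteq$, take a point $y\in M_G$ with $G\preceq F$. In the local model, points over $\mathring F$ accumulate at $y$: in the chart, let some of the zero quaternionic coordinates become nonzero along a path tending to zero. Then $\pi_{\cF}^{-1}(\le F)$ is closed and every stratum closure is a union of strata.

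The main obstacle is Step 2 and Step 3's reliance on the precise local model. The action is non-abelian, so stabilisers over $\mathring F$ are only conjugate, not equal. One must therefore check that the chart identification is compatible with varying stabilisers along the face. I would handle this by using the locally regular condition, which gives equivariant charts over neighbourhoods of points in $\mathring F$, and by arguing only locally, so that conjugacy issues never arise.
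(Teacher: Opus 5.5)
Your proof is correct, but it is organised differently from the paper's.

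\textbf{How the paper argues.} It obtains continuity and openness of $\rho_P$ by identifying $P^n$ with the order complex $|\Delta(\cF(P^n),\preceq)|$ and using that the McCord map is continuous and open (Proposition~\ref{prop:polytope-to-poset}, Lemma~\ref{lem:mccord-open}). It notes that $\pi$ is open. It then invokes Woolf's Theorem~B (Theorem~\ref{thm:poset-stratified-yields-stratified}), which turns ``continuous and open map to a locally finite poset'' directly into the stratification axioms.

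\textbf{What your route changes.} Your formula $\rho_P^{-1}(U)=P^n\setminus\bigcup_{G\notin U}G$ for an up-set $U$ is more elementary and bypasses the barycentric subdivision entirely. You then check the axioms by hand instead of citing Woolf.

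\textbf{Steps 2 and 3 do not need charts.} Step 1 already suffices for both.
\begin{itemize}
\item[(a)] $M_F$ is locally closed: $\{F\}=U_F\cap\{G: G\preceq F\}$ is the intersection of an open set and a closed down-set in $\cF_\preceq$, so its preimage under the continuous map $\pi_{\cF}$ is locally closed.
\item[(b)] $\pi^{-1}(F)\subseteq\overline{M_F}$: for $y\in\pi^{-1}(F)$ and an open $O\ni y$, the open set $\pi(O)$ contains $\pi(y)\in F=\overline{\mathring F}$. Hence $\pi(O)$ meets $\mathring F$, so $O$ meets $M_F$.
\end{itemize}
This is exactly what Woolf's theorem packages.

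\textbf{What charts buy, and where your sketch is imprecise.} The chart route does give more: strata are connected embedded submanifolds, which the paper proves separately in Lemmas~\ref{lem:isotropy-of-strata} and~\ref{lem:strata-simply-connected}. But the equivariant model ``$\mathbb H^k\times Q^{n-k}\times\R^{n-k}$ up to an automorphism of $Q^n$'' is wrong in general. Automorphisms of $Q^n$ permute the simple factors, so they cannot carry a coordinate factor to a twisted diagonal $\{(q,aqa^{-1})\}$. That twisted diagonal is precisely the stabiliser over the edge $e_{12}$ for $\mathbb{HP}^2$ (Example~\ref{ex:HP2-stratifications}). You only use the underlying topology of a regular chart, where the preimage of a face interior is cut out by the vanishing of the corresponding quaternionic coordinates, so this does not break the argument. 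For the statement at hand the charts can simply be dropped.
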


The face stratification coincides with the connected components of the orbit-type Whitney stratification (Corollary~\ref{cor:face-orbit-type-comparison}) and extends to the Borel construction. Let $E_NQ^n=(S^{4N+3})^n$ and $EQ^n=\bigcup_NE_NQ^n$. In Section~\ref{sec:borel-construction} we study the Borel space construction of a  quoric manifold
\begin{equation*}
M_Q:=EQ^n\times_{Q^n}M^{4n}=\bigcup_NE_NQ^n\times_{Q^n}M^{4n}
\end{equation*}
together with the projection $b_Q\colon M_Q\to\cF_\preceq$, $[e,m]\mapsto\pi_{\cF}(m)$. For a face $F$ let $U_F=\{G\in\cF(P^n)\mid F\preceq G\}$ be its minimal open neighborhood and $V_{F,Q}:=b_Q^{-1}(U_F)$ the corresponding open star. Our second main result is the following.

\begin{theorem*}[Propositions~\ref{prop:loc-on-orbit-type-strata} and~\ref{prop:borel-deformation-retract}]
Let $F$ be a face of $P^n$ of codimension $k$ and $f\in\mathring F$. Then $EQ^n\times_{Q^n}M_F\simeq B\lambda^b(F)\cong BQ^k$, and the open star $V_{F,Q}$ deformation retracts onto $EQ^n\times_{Q^n}O_f\simeq B\lambda^b(F)$, where $O_f=\pi^{-1}(f)$ and $\lambda^b(F)\cong Q^k$ is the isotropy group of the stratum over $F$. In particular, both are connected and simply connected.
\end{theorem*}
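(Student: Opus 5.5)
The plan is to reduce both homotopy equivalences to the standard fact that for a closed subgroup $H\le G$ of a compact Lie group, $EG\times_G (G/H)\cong EG/H\simeq BH$, combined with the local structure of the action near an orbit. First I will describe the stratum $M_F=\pi^{-1}(\mathring F)$. By local standardness of the quoric action (Definition~\ref{def:quoric} and the isotropy description of Section~\ref{sec:quoric-mnflds}), over $\mathring F$ the stabilisers are conjugate to $\lambda^b(F)\cong Q^k$. Since $\mathring F$ is contractible, the orbit map restricts to a fibre bundle $M_F\to\mathring F$ with fibre $O_f\cong Q^n/\lambda^b(F)$, and this bundle is trivial. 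So there is a $Q^n$-equivariant homeomorphism $M_F\cong \mathring F\times Q^n/\lambda^b(F)$, where $Q^n$ acts trivially on $\mathring F$. To get it, I will choose a continuous section of $M_F\to\mathring F$, using a local standard chart near $f$ together with the fact that $\mathring F$ is a contractible cell. If the stabilisers vary within their conjugacy class, I will conjugate along the contractible base; this works because the normaliser acts transitively on the conjugates in question. Then
\[
EQ^n\times_{Q^n}M_F\cong \mathring F\times EQ^n/\lambda^b(F)\simeq B\lambda^b(F)\cong BQ^k,
\]
since $EQ^n$ is a free contractible $\lambda^b(F)$-space.

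For the open star, $V_{F,Q}=EQ^n\times_{Q^n}\pi^{-1}(\rho_P^{-1}(U_F))$, and $\rho_P^{-1}(U_F)=\bigcup_{G\succeq F}\mathring G$ is the open star $\mathrm{st}(F)$ of $F$ in $P^n$. Because $P^n$ is simple, $\mathrm{st}(F)$ is homeomorphic to $\mathring F\times \R_{\ge 0}^k$, with $f$ corresponding to $(f,0)$, and this homeomorphism respects faces. The local model of the quoric action then gives a $Q^n$-equivariant homeomorphism
\[
\pi^{-1}(\mathrm{st}(F))\cong \mathring F\times Q^n\times_{\lambda^b(F)}\mathbb H^k,
\]
where $\lambda^b(F)\cong Q^k$ acts on $\mathbb H^k$ by coordinatewise quaternionic multiplication. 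Its orbit space is $\mathbb H^k/Q^k=\R_{\ge0}^k$ via $(q_i)\mapsto(|q_i|)$. The radial scaling $t\cdot v$ on $\mathbb H^k$ commutes with the $Q^k$-action. The same holds for contracting $\mathring F$ to $f$, since $Q^n$ acts trivially on that factor. Together these give a $Q^n$-equivariant strong deformation retraction of $\pi^{-1}(\mathrm{st}(F))$ onto $\{f\}\times Q^n\times_{\lambda^b(F)}\{0\}=O_f$. Applying $EQ^n\times_{Q^n}(-)$, which preserves equivariant homotopies levelwise in $N$ and hence on the union, yields a deformation retraction of $V_{F,Q}$ onto $EQ^n\times_{Q^n}O_f\simeq B\lambda^b(F)$.

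Finally, $BQ^k=(\HP{\infty})^k$ is connected and simply connected; indeed it is $3$-connected, since $Q$ is $2$-connected. Hence both spaces are connected and simply connected.

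The main obstacle is the global equivariant triviality over $\mathring F$ and over $\mathrm{st}(F)$. Local standardness gives the model only near $f$, while the stabilisers along $\mathring F$ are merely conjugate, not equal. To handle this, I would first prove the claim for a small standard neighbourhood of $f$. I would then use that $\mathrm{st}(F)$ retracts, face-preservingly, into such a neighbourhood along $\R_{\ge0}^k$-rays and $\mathring F$. The retraction lifts equivariantly by the covering homotopy theorem for $G$-spaces with a given orbit type (Bredon). If this lifting step fails, I would fall back on the slice theorem to build the equivariant retraction directly.
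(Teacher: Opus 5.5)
For the stratum you do essentially what the paper does. Lemma~\ref{lem:isotropy-of-strata} trivialises, over the contractible $\mathring F$, the principal $N_{Q^n}(H)/H$-bundle of points whose stabiliser is exactly $H=\lambda^b(F)$, and this is equivalent to your section with constant stabiliser. Your justification there is slightly off: it is $Q^n$, not the normaliser, that acts transitively on the conjugates of $H$. What you actually need is to lift the stabiliser map $\mathring F\to Q^n/N_{Q^n}(H)$ to $Q^n$, which is possible because $\mathring F$ is contractible.

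For the open star your route is genuinely different. The paper never builds a product model of $\pi^{-1}(\mathrm{st}(F))$. It uses Hopkinson's theorem $M^{4n}\cong\mathcal M(\lambda)=(Q^n\times P^n)/\!\sim_\lambda$, notes that the star $U_F$ is convex, and lifts the affine contraction $h(t,x)=tx+(1-t)f$ to $[q,x]\mapsto[q,h(t,x)]$. Well-definedness only needs two facts: $h(t,\cdot)$ keeps each $\mathring K$ in itself for $t>0$, and $\lambda^b(K)\subseteq\lambda^b(F)$ for $F\preceq K$. This gives an explicit strong equivariant deformation retraction that also preserves closures of strata, a property used later in Proposition~\ref{prop:gamma-on-generators}.

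You instead globalise local data via the Palais--Bredon covering homotopy theorem. This needs no canonical model or compatible representatives, so it applies to any locally standard action of this type, but it costs more. That theorem lifts only orbit-type preserving homotopies. So you must stop the base contraction once its image lies in the standard neighbourhood $N$ of $f$; the same affine contraction works, since it is face-preserving for $t>0$. You then lift only that part and perform the final collapse to $O_f$ inside the local model. What comes out is a priori an equivariant homotopy equivalence $O_f\hookrightarrow\pi^{-1}(\mathrm{st}(F))$. That already suffices for $V_{F,Q}\simeq B\lambda^b(F)$, and it can be upgraded to a deformation retraction because $O_f$ is a closed invariant submanifold, hence an equivariant cofibration. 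Once you use the affine contraction, the face-preserving homeomorphism $\mathrm{st}(F)\cong\mathring F\times\R^k_{\ge0}$ you assert is no longer needed.
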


This is the topological input for the sheaf of DG-algebras of Section~\ref{sec:quoric-dg-algebra}.

\subsection{Sheaves on quoric manifolds}
Sheaves that retain information about both the $Q^n$-action and the face stratification are our main interest. In Section~\ref{sect:sheaves-quoric} we study $Q^n$-equivariant sheaves on $M^{4n}$, with the face stratification $\{M_F\}_{F\in\cF(P^n)}$ as the organising structure. The aims are to compute the categories of sheaves and local systems on orbits and strata, and to use this to describe constructible and perverse sheaves.

Every stabiliser of the $Q^n$-action is connected, being isomorphic to $Q^k=\mathrm{SU}(2)^k$, and each stratum splits equivariantly as $M_F\cong\mathring F\times Q^n/\lambda^b(F)$. Since $\mathrm{SU}(2)$ is simply connected and $\mathring F$ is contractible, every stratum is simply connected. This gives two computations.
\begin{itemize}
\item \emph{Orbits.} The equivariant category of an orbit $Q^n/H$ is $\mathrm{Rep}(\pi_0(H))$, and $\pi_0(H)$ is trivial for every stabiliser. Hence $\mathrm{Sh}_{Q^n}(Q^n\cdot m)$, $\mathrm{Loc}(Q^n\cdot m)$ and $\mathrm{Loc}_{Q^n}(Q^n\cdot m)$ are all equivalent to $\mathrm{Vect}_{\C}$, so an equivariant sheaf on an orbit is constant.
\item \emph{Strata.} $\mathrm{Sh}_{Q^n}(M_F)\simeq\mathrm{Sh}(\mathring F)$, so the group action carries no extra data on a stratum, and $\mathrm{Loc}(M_F)\simeq\mathrm{Loc}_{Q^n}(M_F)\simeq\mathrm{Vect}_{\C}$. The proofs use the vanishing of $\pi_1$ of the stratum and of its Borel construction $EQ^n\times_{Q^n}M_F\simeq B\lambda^b(F)$.
\end{itemize}

All of the above have been well studied in general. Our aim is to carry out explicit calculations in the case of quoric manifolds. Our third main result is the following.

\begin{theorem*}[Theorem~\ref{thm:constructible-quoric}]
Let $M^{4n}$ be a quoric manifold with the natural face stratification $\{M_F\}_{F\in\cF(P^n)}$.
\begin{enumerate}
\item A sheaf $\cG$ is constructible if and only if, for every face $F$, the restriction $\cG|_{M_F}$ is a constant sheaf with fibre a finite-dimensional $\C$-vector space $V_F$.
\item The simple objects of $\mathrm{Perv}_{\mathcal S}(M^{4n})$, the perverse sheaves constructible with respect to $\mathcal S=\{M_F\}$, are, up to isomorphism, the complexes $\mathrm{IC}_F=\mathrm{IC}(M_F,\underline\C)$, one for each face $F$ of $P^n$, and they are pairwise non-isomorphic.
\end{enumerate}
\end{theorem*}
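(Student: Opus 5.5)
For part (1) I will work with the standard definition: a sheaf $\cG$ on $M^{4n}$ is constructible with respect to $\mathcal S=\{M_F\}$ when each restriction $\cG|_{M_F}$ is a local system of finite rank. By Theorem~\ref{thm:quoric-stratified} the pieces $M_F$ form a finite stratification, so it is enough to understand local systems on a single stratum. The splitting $M_F\cong\mathring F\times Q^n/\lambda^b(F)$ together with $\lambda^b(F)\cong Q^k$ gives, from the long exact homotopy sequence of $Q^k\to Q^n\to Q^n/Q^k$ and $\pi_1(Q^n)=\pi_0(Q^k)=0$, that $Q^n/\lambda^b(F)$ is connected and simply connected. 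Since $\mathring F$ is contractible, $M_F$ is connected and simply connected. A local system of finite rank on a connected, simply connected, locally contractible space is constant; this is the equivalence $\mathrm{Loc}(M_F)\simeq\mathrm{Vect}_{\C}$ recorded above, obtained from the monodromy representation of the trivial group $\pi_1$. So $\cG|_{M_F}\cong\underline{V_F}$ with $V_F$ finite-dimensional. The converse holds because constant sheaves of finite rank are local systems. Both directions are then immediate.

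For part (2) I will use the Beilinson--Bernstein--Deligne classification. For a stratification satisfying the usual conditions (the face stratification is Whitney by Corollary~\ref{cor:face-orbit-type-comparison}), every simple object of $\mathrm{Perv}_{\mathcal S}(M^{4n})$ has the form $\mathrm{IC}(\overline{M_F},\cL)$ for a unique stratum $M_F$ and an irreducible local system $\cL$ on $M_F$, and there are no other simple objects. The strata all have even real dimension $4(n-\operatorname{codim}F)+(n-\operatorname{codim} F)$... I will not need to check this explicitly. The degree shift convention is fixed so that the middle perversity is well defined; in the real setting one takes the perversity attached to real codimension.

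By part (1), or directly by $\mathrm{Loc}(M_F)\simeq\mathrm{Vect}_{\C}$, the only irreducible local system on $M_F$ is $\underline{\C}$. This yields exactly one simple object $\mathrm{IC}_F$ for each face $F$.

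To see that these objects are pairwise non-isomorphic, I will look at supports. The support of $\mathrm{IC}_F$ is $\overline{M_F}=\pi^{-1}(F)$, and its restriction to the open dense stratum $M_F$ of that support is $\underline{\C}$ placed in a single degree. Distinct faces have distinct closures $\pi^{-1}(F)$, so the supports differ and the objects cannot be isomorphic.

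The main obstacle is justifying the BBD framework in this real, non-algebraic setting. This requires three things: that the stratification satisfies the frontier condition and is topologically locally trivial (which follows from the Whitney property via Thom--Mather), that the middle perversity makes sense (here all strata have codimensions that are multiples of $5$... or more precisely the codimension is $5\operatorname{codim}F$, since each codimension-one step loses one polytope direction and one $S^3$ of orbit), and that intermediate extension is defined. My first step will be to check that the real codimension of $M_G\subset\overline{M_F}$ is $5(\operatorname{codim}G-\operatorname{codim}F)$ and to use a perversity function constant on strata of equal codimension. The Goresky--MacPherson/BBD arguments then go through verbatim for topologically stratified pseudomanifolds.
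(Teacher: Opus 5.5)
Your route is the paper's. Part (1) follows from the simple connectivity of $M_F\cong\mathring F\times Q^n/\lambda^b(F)$. Part (2) follows from the intermediate-extension classification of simple perverse sheaves, the fact that $\underline{\C}$ is the only irreducible local system on a simply connected stratum, and the distinct supports $\overline{M_F}=\pi^{-1}(F)$. Those parts are fine.

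The one concrete computation the paper carries out in part (2) is wrong in your proposal, and it is exactly the check you list as your first step. With $k=\operatorname{codim}F$ one has $\dim_{\mathbb R}M_F=\dim\mathring F+\dim Q^n/\lambda^b(F)=(n-k)+(3n-3k)=4\dim F$. Hence the codimension of $M_G$ in $\overline{M_F}$ is $4(\operatorname{codim}G-\operatorname{codim}F)$, not $5(\operatorname{codim}G-\operatorname{codim}F)$. Each step up in codimension loses one polytope direction and one $S^3$ from the orbit, which is $1+3=4$ real dimensions.

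Your formula $4(n-k)+(n-k)$ counts one of these twice. It also contradicts your own claim that all strata have even dimension. A quick sanity check: the top stratum $M_{P^n}$ is open in $M^{4n}$, so its dimension is $4n$, not $5n$. In $\mathbb{H}P^2$ the strata have dimensions $8,4,0$.

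This error is not cosmetic. The objects in the statement are $\mathrm{IC}_F=j_{F!*}\underline{\C}_{M_F}[\tfrac12\dim_{\mathbb R}M_F]$. The paper computes $\dim_{\mathbb R}M_F=4\dim F$ precisely to see that the shift $\mathbf p(M_F)=2\dim F$ is an integer, so the middle perversity is well defined. Under your count, the middle shift would be $\tfrac52\dim F$, which is non-integral for odd-dimensional faces. Any other perversity you chose instead gives a different heart, whose simples are $j_{F!*}\underline{\C}_{M_F}$ shifted by amounts other than those in the statement.

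The structural part of your argument is unaffected, since the intermediate-extension classification of simples does not depend on which perversity is used. Redo the count as $4\dim F$ and use $\mathbf p(M_F)=2\dim F$; then your proof coincides with the paper's.
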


Thus constructibility on a quoric manifold involves no monodromy: a constructible sheaf is determined by the vector spaces $V_F$ and the gluing of the strata, and the simple perverse sheaves are parametrised by the faces of the polytope. The section also recalls the background on sheaves, equivariant sheaves, local systems, recollement and $t$-structures used in the proofs.

\subsection{The equivariant derived category of a quoric manifold}
The study of the equivariant derived category of a quoric manifold can be approached through the combinatorial structure of its orbit space. The face poset $\cF_\preceq$ provides a finite model for organising the local equivariant data, while a sheaf of DG-algebras on this poset encodes the corresponding algebraic information.

By the Borel deformation retraction (Proposition~\ref{prop:borel-deformation-retract}), the local equivariant data over $U_F$ are those of $B\lambda^b(F)$, so they are organised by the finite space $\cF_\preceq$. In Section~\ref{sec:quoric-dg-algebra} we associate to $b_Q$ the sheaf of DG-algebras $\cA=b_{Q,*}\Omega^\bullet_{M_Q}$, where $\Omega^\bullet_{M_Q}$ is the complex-valued de Rham sheaf of $M_Q$, defined as the limit of the de Rham complexes of the approximations $M_Q^N=E_NQ^n\times_{Q^n}M^{4n}$. Its stalks are computed in Proposition~\ref{prop:stalks-of-cA}: $H^*(\cA_F)\cong\C[x_1,\dots,x_k]$ with $k=\mathrm{codim}\,F$ and $\deg x_i=4$. A sheaf on the finite space $\cF_\preceq$ is determined by finitely many stalks and restriction maps. The modules $\cA_{U_F}$ form a finite collection of DG-$\cA$-modules which generates the constructible derived category $D_{\cA,c}$ (Lemma~\ref{lem:generation-by-minimal-opens}).

Our fourth main result is the algebraic description of the $Q^n$-equivariant derived category of constructible sheaves on a quoric manifold. 
\begin{theorem*}[Theorem~\ref{thm:main-theorem}]
Let $M^{4n}$ be a quoric manifold over a simple polytope $P^n$, and let $\cA=b_{Q,*}\Omega^\bullet_{M_Q}$ be the associated sheaf of DG-algebras on $\cF_\preceq$. Then the functors $\Gamma$ and $L$ restrict to mutually quasi-inverse equivalences of triangulated categories
\begin{equation*}
\Gamma\colon D^b_{Q^n,c}(M^{4n})\ \xrightarrow{\ \sim\ }\ D_{\cA,c},
\end{equation*}
where $D_{\cA,c}$ is the derived category of constructible DG-$\cA$-modules.
\end{theorem*}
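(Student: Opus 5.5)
We follow Lunts' strategy for toric varieties \cite{Lunts1995}, adapted to the finite space $\cF_\preceq$. The functor $\Gamma$ sends an equivariant complex $K$, viewed on the Borel space $M_Q$ (or on the compatible system $M_Q^N$), to $b_{Q,*}(\Omega^\bullet_{M_Q}\otimes K)$. This is a DG-module over $\cA=b_{Q,*}\Omega^\bullet_{M_Q}$. Its left adjoint $L$ is $L(\cN)=\Omega^\bullet_{M_Q}\otimes_{b_Q^{-1}\cA}b_Q^{-1}\cN$, computed on a K-projective (semi-free) replacement of $\cN$.

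The first step is to set up the adjunction $(L,\Gamma)$ on the full derived categories, with unit $\cN\to\Gamma L\cN$ and counit $L\Gamma K\to K$. Both functors are triangulated. By a standard d\'evissage it then suffices to do two things: check that the unit and counit are quasi-isomorphisms on a set of generators, and check that both functors preserve the constructible subcategories.

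\textbf{Generators.} On the sheaf side, take $j_{F!}$-type extensions: the equivariant constant sheaves $\C_{V_F}$ on the open stars $V_F=\pi_\cF^{-1}(U_F)$, extended by zero. These generate $D^b_{Q^n,c}(M^{4n})$. To see this, argue by induction on the number of strata via recollement, using that each stratum is equivariantly simply connected (so by Theorem~\ref{thm:constructible-quoric}(1) and the orbit and strata computations, every constructible equivariant sheaf on $M_F$ is constant). On the algebraic side, Lemma~\ref{lem:generation-by-minimal-opens} says that the modules $\cA_{U_F}$ generate $D_{\cA,c}$.

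\textbf{Matching generators.} We check that $\Gamma(\C_{V_F})\cong\cA_{U_F}$. Since $U_F$ is the minimal open set around $F$, the sections of $b_{Q,*}$ over opens are controlled by the preimages $V_{G,Q}$. The deformation retraction of Proposition~\ref{prop:borel-deformation-retract} together with the stalk computation of Proposition~\ref{prop:stalks-of-cA} show that the stalks agree, namely $H^*(B\lambda^b(G))$ for $G\succeq F$ and $0$ otherwise, compatibly with restriction maps. Hence $L\cA_{U_F}\simeq\C_{V_F}$, because $\cA_{U_F}$ is already K-projective, essentially being $j_{U_F!}$ of a free module.

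\textbf{Full faithfulness on generators.} We must show
\[
\Hom_{D_{Q^n}}(\C_{V_F},\C_{V_G}[i])\cong\Hom_{D_\cA}(\cA_{U_F},\cA_{U_G}[i]).
\]
The right-hand side equals $H^i(\cA_{U_G}(U_F))$, which is $H^i(\cA_F)$ if $G\preceq F$ and zero otherwise. The left-hand side is the equivariant cohomology $H^i_{Q^n}(V_F;\C_{V_G})$, which by the retraction of $V_{F,Q}$ onto $EQ^n\times_{Q^n}O_f$ is $H^*(B\lambda^b(F))$ or $0$ in the same cases. Checking that $\Gamma$ induces exactly this isomorphism is where the de Rham model on the approximations $M_Q^N$ and the limit over $N$ must be handled carefully.

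\textbf{Conclusion.} Once full faithfulness holds on generators, the five lemma and thick-subcategory closure give that $\Gamma$ is fully faithful on the triangulated hull, which is $D^b_{Q^n,c}(M^{4n})$. Its essential image contains the generators of $D_{\cA,c}$, so $\Gamma$ is essentially surjective, and $L$ is a quasi-inverse.

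\textbf{Main obstacle.} I expect the hardest step to be the technical one: making $\Gamma$ and $L$ well defined on $D_{Q^n}$ via the limit of de Rham complexes. This requires compatibility with the Bernstein–Lunts definition using the approximations $E_NQ^n$. I would first use the fact that $H^i$ stabilises for $N\gg i$ on each $V_{F,Q}$, since the connectivity of $S^{4N+3}$ grows with $N$; this should give Mittag-Leffler, so no $\lim^1$ terms arise.
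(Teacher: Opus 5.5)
Your architecture matches the paper's: the same two generating families $\{j_{F!}\underline\C_{V_{F,Q}}\}$ and $\{\cA_{U_F}\}$, the same matching $\Gamma(j_{F!}\underline\C_{V_{F,Q}})\simeq\cA_{U_F}$ and $L(\cA_{U_F})\simeq j_{F!}\underline\C_{V_{F,Q}}$, and a d\'evissage over generators. Your recollement induction for generation is a harmless variant of Lemma~\ref{lem:generation-sheaf-side}. Note that $L(\cA_{U_F})\simeq\cG_F$ must be computed directly, using that $\cA_{U_F}$ is K-projective. It cannot follow (``Hence'') from the $\Gamma$-computation, since that would already presuppose the counit is an isomorphism.

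The gap is the step you call full faithfulness on generators, which is the actual content of the theorem and which you leave open. An abstract isomorphism $\Hom(\cG_F,\cG_G[i])\cong\Hom(\cA_{U_F},\cA_{U_G}[i])$ does not give full faithfulness; you need the map induced by $\Gamma$. Given $L\dashv\Gamma$, that map is precomposition with the counit $c_{\cG_F}$, so what is required is that $c_{\cG_F}$ be an isomorphism. This is formal, not a question about limits of de Rham complexes. Take $\gamma\colon\cA_{U_F}\to\Gamma(\cG_F)$ as the K-projective resolution. Then $\mu\colon s\mapsto 1\otimes s$ followed by $\mathrm{id}\otimes b_Q^{-1}\gamma$ and the multiplication map is the canonical quasi-isomorphism $\cG_F\to\Omega^\bullet_{M_Q}\otimes_\C\cG_F$, so $c_{\cG_F}\circ\mu=\mathrm{id}$. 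Likewise $\eta$ factors as $b_{Q,*}\theta\circ b_{Q,*}(\mathrm{id}\otimes\mu)\circ\gamma$ (Lemma~\ref{lem:u-c-on-generators}). The paper proves both $u$ and $c$ are isomorphisms on generators, which makes $L$ a quasi-inverse directly. Your route additionally needs $L\dashv\Gamma$ to be an honest adjunction on the derived categories, both to identify $\Gamma$ on $\Hom$'s with $c^*$ and to conclude about $L$.

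A second point is thin. The vanishing of the stalk of $\Gamma(\cG_F)$ at $G\notin U_F$ does not follow from Proposition~\ref{prop:stalks-of-cA}, and the same is true of $\Hom(\cG_F,\cG_G[i])$ for $G\not\preceq F$. Put $W=V_{G,Q}$, $O=W\cap V_{F,Q}$ and $Z=W\setminus O$. The vanishing amounts to $Z\hookrightarrow W$ being a homotopy equivalence. The paper gets this from part~(3) of Proposition~\ref{prop:borel-deformation-retract}: the retraction onto $EQ^n\times_{Q^n}O_g$, $g\in\mathring G$, preserves the closures of the strata making up $Z$, so it restricts to $Z$. Moreover $Z$ contains the target because $F\not\subseteq G$.

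The Mittag-Leffler issue you single out as the main obstacle is treated in the paper as foundational input, namely softness and the Poincar\'e lemma in Lemma~\ref{lem:de-rham-properties}, via Bernstein--Lunts. It is not where this proof's weight lies.
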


We outline the proof, which follows the strategy of Lunts \cite{Lunts1995}. The functors
\begin{equation*}
\Gamma\colon D^+(M_Q)\to D_{\cA},\qquad L\colon D_{\cA}\to D(M_Q),
\end{equation*}
are given by $\Gamma(\cG^\bullet)=b_{Q,*}(\Omega^\bullet_{M_Q}\otimes_{\C}\cG^\bullet)$ and $L(\cN)=\Omega^\bullet_{M_Q}\otimes_{b_Q^{-1}\cA}b_Q^{-1}R(\cN)$, where $R(\cN)\to\cN$ is a projective resolution. They are defined on the full categories, and the theorem asserts that they are inverse to each other on the constructible subcategories.

The proof reduces to a finite collection of generators. On the DG-module side, $D_{\cA,c}$ is generated by $\{\cA_{U_F}\}_{F\in\cF(P^n)}$. On the equivariant side, let $j_F\colon V_{F,Q}\hookrightarrow M_Q$ be the open embedding of the open star. Then $\{j_{F!}\underline{\C}_{V_{F,Q}}\}_{F\in\cF(P^n)}$ generates $D^b_{Q^n,c}(M^{4n})$ (Lemma~\ref{lem:generation-sheaf-side}). We show that the two functors exchange these families, namely $\cA_{U_F}\simeq\Gamma(j_{F!}\underline{\C}_{V_{F,Q}})$ and $j_{F!}\underline{\C}_{V_{F,Q}}\simeq L(\cA_{U_F})$. Finally, we construct natural transformations $u\colon\mathrm{Id}\to\Gamma L$ and $c\colon L\Gamma\to\mathrm{Id}$ and show that they are isomorphisms on the generators. The full subcategories on which they are isomorphisms are triangulated, so  are isomorphisms on all of $D_{\cA,c}$ and $D^b_{Q^n,c}(M^{4n})$.

\subsection{Formality of the DG-algebra}

The fifth and last main result of this paper concerns the sheaf of DG-algebras $\cA$ itself. By Proposition~\ref{prop:stalks-of-cA}, the cohomology of each stalk $\cA_F$ is a polynomial ring on generators of degree $4$. Motivated by \cite{Lunts1995} we show that $\cA$ is quasi-isomorphic to its cohomology. In particular we have the following.

\begin{theorem*}[Theorem~\ref{prop:formality}]
Let $M^{4n}$ be a quoric manifold over a simple polytope $P^n$. The sheaf of commutative DG-algebras $\cA=b_{Q,*}\Omega^\bullet_{M_Q}$ on $\cF_\preceq$ is formal, that is, quasi-isomorphic to its cohomology sheaf $\cH=H^\ast(\cA)$ with zero differential.
\end{theorem*}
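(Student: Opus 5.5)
We follow Lunts' strategy for toric varieties: we construct a zigzag of quasi-isomorphisms of sheaves of commutative DG-algebras on $\cF_\preceq$,
\begin{equation*}
\cA \longleftarrow \cB \longrightarrow \cH,
\end{equation*}
compatible with restriction maps. Since $\cF_\preceq$ is a finite space, a sheaf is the same as a functor on the face poset, with stalk $\cA_F=\cA(U_F)$ and restriction maps $\cA_F\to\cA_G$ for $F\preceq G$. So it suffices to work with diagrams of CDGAs indexed by $\cF(P^n)$ and check quasi-isomorphism stalkwise.

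\textbf{Step 1 (cohomology sheaf).} By Proposition~\ref{prop:stalks-of-cA} and Proposition~\ref{prop:borel-deformation-retract}, $H^*(\cA_F)\cong H^*(B\lambda^b(F);\C)\cong H^*(BQ^k;\C)=\C[x_1,\dots,x_k]$. For $F\preceq G$ we have $\lambda^b(G)\subseteq\lambda^b(F)$, a coordinate subtorus-type inclusion $Q^{k'}\hookrightarrow Q^k$ up to conjugacy, and the restriction $H^*(\cA_F)\to H^*(\cA_G)$ is induced by $B\lambda^b(G)\to B\lambda^b(F)$. This is a surjection of polynomial rings killing some generators. So $\cH$ is a sheaf of polynomial algebras with surjective restrictions, and the generators can be chosen compatibly. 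Indeed, each facet $F_i$ containing $F$ contributes one factor $Q$ (the image of $\lambda(F_i)$), and we let $x_i$ be the pullback of the degree-$4$ class $c_2$ (second Chern class of $\mathrm{SU}(2)$) along $B\lambda^b(F)\to BQ$. Conjugation does not affect these classes, since $c_2$ is conjugation invariant.

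\textbf{Step 2 (global classes).} The key idea is to realize the $x_i$ globally. For each facet $F_i$ of $P^n$, the projection of $Q^n$ onto the factor determined by $\lambda(F_i)$ gives a map $M_Q\to EQ^n\times_{Q^n}\mathrm{pt}\to BQ$, or more directly a rank-$2$ complex vector bundle on $M_Q$. A Chern–Weil representative of $c_2$, chosen compatibly on the approximations $M_Q^N$, gives a closed $4$-form $\omega_i\in\Gamma(M_Q,\Omega^4)$. Its restriction to $V_{F,Q}$ represents $x_i$ when $F\subseteq F_i$ and is exact otherwise, because the corresponding factor acts freely there.

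Define $\cB$ to be the subsheaf of $\cA$ whose sections over $U_F$ are the subalgebra generated by the restrictions $\omega_i|_{V_{F,Q}}$ for $F\subseteq F_i$, together with a choice of primitives $\eta_i$ with $d\eta_i=\omega_i|_{V_{F,Q}}$ for the remaining facets. The difficulty is that the primitives must be compatible with restriction.

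\textbf{Step 3 (main obstacle).} We must make the choices compatible so that $\cB$ is a sheaf and the evident map $\cB\to\cH$ (sending $\omega_i\mapsto x_i$ and $\eta_i\mapsto0$) is a well-defined DG-algebra map. Avoiding primitives altogether is better. We take $\cB(U_F)$ to be the subalgebra generated by all $\omega_i|_{V_{F,Q}}$ with $F\subseteq F_i$, and we aim to show that $\omega_i|_{V_{F,Q}}=0$ on the nose when $F\not\subseteq F_i$. We would try to arrange this by choosing the connection on the $i$-th bundle to be pulled back from a flat trivialization over $b_Q^{-1}(\{G: G\not\subseteq F_i\})$, using that the $i$-th factor acts freely off $\pi^{-1}(F_i)$ and a partition of unity subordinate to a neighbourhood of $\pi^{-1}(F_i)$. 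If exact vanishing fails on all of $V_{F,Q}$, we fall back on Lunts' method: represent $\cH$ as a quotient of the constant sheaf $\C[x_1,\dots,x_m]$ ($m$ the number of facets) and use the Stanley–Reisner-type presentation. The map $\C[x_1,\ldots,x_m]\to\Gamma(M_Q,\cA)$, $x_i\mapsto\omega_i$, induces $\C[x]\otimes\C_{U_F}\to\cA$. Its kernel on cohomology is generated by $x_i$ with $F\not\subseteq F_i$, and we resolve these by a Koszul-type CDGA $\cB=\C[x_1,\dots,x_m]\otimes\Lambda[y_i]$ with $dy_i=x_i$ on the appropriate opens. We then extend $\cB\to\cA$ by sending $y_i$ to compatible primitives, which exist stalkwise because $H^3=0$ and restriction maps of primitives can be corrected inductively over the finite poset, using surjectivity of restrictions in cohomology.

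\textbf{Step 4 (conclusion).} Once $\cA\leftarrow\cB\to\cH$ is constructed, both maps are quasi-isomorphisms on each stalk $\cA_F$ by Step 1, since they induce the identity $\C[x_i: F\subseteq F_i]$ on cohomology. Stalkwise quasi-isomorphism on a finite space is quasi-isomorphism of sheaves, so $\cA$ is formal.
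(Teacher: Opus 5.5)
Your overall architecture, a zigzag $\cA\leftarrow\cB\to\cH$ checked stalkwise, is the right one. But there is a genuine gap in Step 2, and Step 3 inherits it.

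The forms you build are Chern--Weil pullbacks along $M_Q\to BQ^n\to BQ$, so each $\omega_i$ is the pullback of a nonzero class in $H^4(BQ^n)$. At any vertex $v$ the isotropy $\lambda^b(v)$ is all of $Q^n$. Hence $V_{v,Q}\simeq EQ^n\times_{Q^n}O_v=BQ^n$, and the restriction $H^\ast(BQ^n)\to H^\ast(\cA_v)$ is an isomorphism. No facet contains every vertex, so for each $i$ there is a vertex $v\not\subseteq F_i$ at which $\omega_i|_{V_{v,Q}}$ is not exact. So ``exact otherwise'' is false. Your justification ``the corresponding factor acts freely there'' is also false, since $O_v$ is a fixed point. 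In addition, for non-coordinate isotropy such as $\Delta Q\le Q^2$ in $\HP{2}$ there is no factor ``determined by $\lambda(F_i)$'' to project onto, and there are only $n$ independent pulled-back generators for $m$ facets.

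Consequently neither branch of Step 3 can work. No connection makes $\omega_i$ vanish on $b_Q^{-1}(\{G:G\not\subseteq F_i\})$, because its class is nonzero there. The Koszul model needs primitives for the $y_i$, and these do not exist at those vertex stalks. And the kernel of $\C[x_1,\dots,x_m]\to H^\ast(\cA_F)$ is not generated by the $x_i$ with $F\not\subseteq F_i$.

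Even where stalkwise primitives exist, your compatibility argument does not go through. Existence of a primitive requires the class in $H^4$ to vanish, so $H^3=0$ is beside the point. ``Inductive correction'' also fails: restriction of forms $\cA_F\to\cA_G$ is not surjective, so a correction $d\beta$ found on $V_{G,Q}$ cannot be extended to $V_{F,Q}$. A compatible family of primitives over an open $U\subseteq\cF_\preceq$ is the same as a single primitive on $b_Q^{-1}(U)$, which is a global condition you have not verified. Facet-indexed classes with that property would have to be something like equivariant Thom forms of $\pi^{-1}(F_i)$, not pullbacks from $BQ^n$.

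The paper sidesteps all of this in two moves:
\begin{itemize}
\item It keeps only the $n$ global forms $\Omega_i$ pulled back from $BQ^n$. Using Hopkinson's description of subgroups $Q^k\le Q^n$, it shows their restrictions generate every $H^\ast(\cA_F)$, with kernel generated by its linear part $I_F$.
\item It replaces any choice of primitives by the canonical subcomplex $C_F$: functions vanishing at a base point, all $1$- and $2$-forms, the $3$-forms $\kappa$ with $d\kappa\in E_F$, and $E_F$ in degree $4$.
\end{itemize}
This subcomplex is automatically compatible with restriction. It has cohomology $W/I_F$, concentrated in degree $4$, precisely because $H^1=H^2=H^3=0$. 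Then $\mathrm{Sym}(C_F)$ maps quasi-isomorphically to both $\cA_F$ and $\cH(U_F)$.
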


 The steps of the proof are outlined below:
\begin{enumerate}
\item The restrictions of the closed global $4$-forms $\Omega_i=\pi^*\omega_i$, which represent the generators of $H^\ast(BQ^n;\R)$, generate the cohomology of every stalk (Proposition~\ref{prop:generation}). This uses Hopkinson's description of the subgroups of $Q^n$ isomorphic to $Q^k$.
\item Using the vanishing of $H^1,H^2,H^3$ of the stalks, we choose in each $\cA_F$ a subcomplex $C_F$, with no choices involved and hence compatible with restriction, whose cohomology is concentrated in degree $4$ (Lemma~\ref{lem:cohomology-CF}). In Lunts's toric case the generators have degree $2$, so only $H^1$ is needed.
\item The free graded-commutative algebras $\cB(U_F)=\operatorname{Sym}(C_F)$ form a sheaf $\cB$, and the maps $\cB\to\cA$ and $\cB\to\cH$ are quasi-isomorphisms, checked on the stalks.
\end{enumerate}

Combined with Theorem~\ref{thm:main-theorem}, formality gives a purely algebraic model: $\cA$ may be replaced by the sheaf of graded algebras $\cH$, which is determined by the isotropy data of the action.
\subsection{Future directions and applications}

We conclude with possible applications to other spaces with a locally standard torus action and a face stratification. For a quasitoric manifold $X^{2n}$ over a simple polytope $P^n$ \cite{DJ91}, the isotropy group of a face $F$ of codimension $k$ is a compact torus $T_F\cong T^k$ with $BT_F\simeq(\mathbb CP^\infty)^k$. Replacing $(\mathbb HP^\infty)^k$ by $(\mathbb CP^\infty)^k$, one anticipates the same finite-space and DG-algebraic description, without any need for a quoric structure. Quaternionic moment-angle manifolds \cite{hopkinson2012quoric,gken_rigidityCH_moment-angle} form a closely related class, with a face stratification and isotropy groups of the same type, and are linked to the quaternionic Davis--Januszkiewicz space $DJ_{\mathbb H}(K)=(\mathbb HP^\infty,*)^K$, where $K$ is the simplicial complex dual to the boundary of $P^n$. Whether the corresponding DG-algebraic equivalence holds for them has not been verified.

\subsection{Organisation of the paper}

Section~\ref{sec:quoric-stuff} treats quoric manifolds, their face stratification and the Borel construction. Section~\ref{sect:sheaves-quoric} studies equivariant and constructible sheaves on them. Section~\ref{sec:main-section} contains the main equivalence, Theorem~\ref{thm:main-theorem}, and Section~\ref{sec:formality} proves the formality of $\cA$.

\subsection*{Acknowledgments} This project was initiated during the 16th Panhellenic Geometry Conference, held at the University of Athens in 2024. The authors are grateful to P.~Batakidis and S.~Prassidis for their comments, questions and suggestions in the early stages of this work. We also thank  I.~Emmanouil and P.~Gianniotis for providing office space at the Department of Mathematics of NKUA during several visits. The first named author is especially grateful to S.~Guillermou and J.~Woolf for their patience in answering his questions, and to I.~Androulidakis for generously sharing his insights. 

What little we understand about equivariant derived categories owes much to the foundational work of V.~Lunts~\cite{Lunts1995}, to whom we feel indebted; without his work, this paper would not have been possible.

\section{Quoric manifolds, finite spaces and stratifications}
\label{sec:quoric-stuff}

\subsection{Quaternionic actions}\label{sec:quaternionic-actions}

Let $\mathbb H$ be the associative algebra of quaternions and denote by $Q$ the quaternions of unit length, namely $Q:=\{q\in\mathbb H:\|q\|=1\}$.  The set $Q$ with the multiplication of quaternions forms a compact, non-commutative, real algebraic and reductive Lie group which is isomorphic to $S^3$ and hence with $\mathrm{SU}(2)$; symbolically
\begin{equation*}
Q\cong S^{3}\cong \mathrm{SU}(2).
\end{equation*}

\begin{notation}
In what follows, we will denote by $Q^n$ the $n$-fold product. 
\end{notation}
 
\noindent  The \emph{standard coordinatewise action} of $Q^{n}$ on $\mathbb H^{n}$ is defined by
\begin{equation*}
(q_{1},\ldots,q_{n})\cdot(h_{1},\ldots,h_{n})=(q_{1}h_{1},\ldots,q_{n}h_{n}).
\end{equation*} 
For a point $h=(h_{1},\ldots,h_{n})\in \mathbb H^{n}$, we denote its isotropy subgroup by
\begin{equation*}
Q^{n}_{h} = \{(q_{1},\ldots,q_{n})\in Q^{n} : q_{i}h_{i}=h_{i} \text{ for all }i\}.
\end{equation*} 
Since the isotropy subgroup of a nonzero quaternion under left multiplication is trivial, whereas the origin is fixed by the whole group $Q$, we obtain $Q_h^n\cong Q^k$, where $k=|\{i\in[n]:h_i=0\}|$; see \cite[\S 2.2, pp.~27--28]{hopkinson2012quoric}.

Consequently, the isotropy subgroup of a point $h\in\mathbb H^n$ is determined by the set of its vanishing coordinates. In particular, points with all coordinates nonzero have trivial isotropy and belong to the principal orbit type of the standard $Q^n$-action.

For a general action of a Lie group $G$ on a space $X$, the isotropy subgroups of points belonging to the same orbit are conjugate. Indeed, for $g\in G$ and $x\in X$, one has $G_{g\cdot x}=gG_xg^{-1}$. Thus, the conjugacy class $(G_x):=\{gG_xg^{-1}:g\in G\}$ is naturally associated with the orbit $G\cdot x$. 

\begin{notation}
Throughout this paper we shall refer to such conjugacy classes simply as \emph{isotropy classes}.
\end{notation}

As explained in \cite[\S 2.2, pp.~27--28]{hopkinson2012quoric}, the orbit space of the standard $Q^n$-action on $\mathbb H^n$ is naturally identified with the positive orthant $\mathbb R_{\geq0}^n$ through the map
\begin{equation*}
\mu:\mathbb H^n\longrightarrow\mathbb R_{\geq0}^n,\qquad\mu(h_1,\ldots,h_n)=(|h_1|,\ldots,|h_n|).
\end{equation*}
Under this identification, the relative interior of each face of $\mathbb R_{\geq0}^n$ consists of points with a prescribed set of vanishing coordinates. Consequently, the isotropy subgroup is constant over the preimage of each such relative interior.

The faces of $\mathbb R_{\geq0}^n$ are determined by the vanishing of coordinate functions. More precisely, for each $i\in[n]:=\{1,\ldots,n\}$, let $F_i=\{(r_1,\ldots,r_n)\in\mathbb R_{\geq0}^n:r_i=0\}$ be the corresponding facet. Every face can be written as
\begin{equation*}
F_\sigma=\bigcap_{i\in\sigma}F_i,
\qquad \sigma\subseteq[n],
\end{equation*}
with the convention $F_\emptyset=\mathbb R_{\geq0}^n$. In particular,
\begin{equation*}
\mathrm{codim}F_\sigma=|\sigma|,
\qquad
F_\sigma\subseteq F_\tau
\Longleftrightarrow
\tau\subseteq\sigma.
\end{equation*}

Thus, the faces of $\mathbb R_{\geq0}^n$, ordered by inclusion, form a poset. Moreover, the isotropy subgroup associated with the relative interior of $F_\sigma$ is the coordinate subgroup
\begin{equation*}
Q_\sigma=\{(q_1,\ldots,q_n)\in Q^n:q_i=1 \text{ for }i\notin\sigma\}\cong Q^{|\sigma|}.
\end{equation*}
In particular, the isotropy subgroup associated with a face of codimension $k$ is isomorphic to $Q^k$; see \cite[\S 2.2, p.~28]{hopkinson2012quoric}.

Thus, the face structure of the positive orthant $\mathbb R_{\geq0}^n$ encodes the isotropy subgroups of the standard $Q^n$-action. Passing to conjugacy classes, this correspondence associates to each face $F_\sigma$ the isotropy class $(Q_\sigma)$. Moreover, if $F_\sigma\subseteq F_\tau$, then $Q_\tau\subseteq Q_\sigma$.

Let $\cF(\mathbb R_{\geq0}^n)$ denote the category whose objects are the faces of $\mathbb R_{\geq0}^n$ and whose morphisms are inclusions. Since
\begin{equation*}
F_\sigma\subseteq F_\tau
\quad\Longleftrightarrow\quad
\tau\subseteq\sigma,
\end{equation*}
there is a canonical isomorphism of categories $\cF(\mathbb R_{\geq0}^n)^{\mathrm{op}}\cong \mathrm{cat}([n])$, where $\mathrm{cat}([n])$ is the category of subsets of $[n]$ ordered by inclusion.

Following \cite[Definition~3.1.18]{hopkinson2012quoric}, we have the following definition:

\begin{definition}\label{def:acceptability}
Let $\mathrm{conj}(Q^n)$ denote the category whose objects are conjugacy classes of closed subgroups of $Q^n$ isomorphic to $Q^k$, for some $0\leq k\leq n$, and whose morphisms are determined by the subclass relation. An \emph{isotropy functor} is a functor
\begin{equation*}
\ell:\mathrm{cat}([n])
\longrightarrow \mathrm{conj}(Q^n).
\end{equation*}
It is called \emph{acceptable} if it is injective on objects.
\end{definition}

\begin{remark}
\label{rem:acceptable-isotropy}
Recall that $\mathrm{cat}([n])\cong\cF(\mathbb R_{\geq0}^n)^{\mathrm{op}}$. Thus, an acceptable isotropy functor assigns to each face $F_\sigma$ of the positive orthant an isotropy class $\ell(\sigma)$, in a manner compatible with the reverse inclusion of faces.

Moreover, by
\cite[Proposition~3.1.19]{hopkinson2012quoric}, if $F_\sigma$ has codimension $k=|\sigma|$, then the isotropy class $\ell(\sigma)$ has rank $k$. In particular, the rank of the isotropy class increases as one passes to faces of higher codimension.
\end{remark}

\begin{remark}
It should be mentioned that Hopkinson gives several equivalent characterizations of acceptability, including formulations in terms of matrices and graphs \cite[p.60]{hopkinson2012quoric}. We shall not need these descriptions here, and therefore use only the isotropy-functor formulation.
\end{remark}

The isotropy functor describes the local orbit structure of the standard $Q^n$-action. The corresponding global object for a quoric manifold is the characteristic functor, which we introduce in the next subsection.

\begin{remark}
Hopkinson allows also more general actions of $Q^n$ on $\mathbb H^n$ of the form

\begin{equation} \label{eq:general_action}
((s_1,...,s_n),(h_1,...,h_n))\mapsto (s_{l_1}h_1s_{r_1}^{-1},...,s_{l_n}h_ns_{r_n}^{-1})
\end{equation}  
\noindent
where each $l_i$ and $r_i$ is either a coordinate index or the empty set (with $s_\emptyset=1$). Notice  that for $l_i=i$ and $r_i=\emptyset$ for all $i$ recovers the coordinatewise action
\begin{equation*}
(s_1,\ldots,s_n)\cdot(h_1,\ldots,h_n)=(s_1h_1,\ldots,s_nh_n).
\end{equation*}
\end{remark}

However, arbitrary actions of the form \eqref{eq:general_action} need not share the orbit-space structure of the standard coordinatewise action.

We next explain how an isotropy functor arises from a group action. Suppose that a $Q^n$-action has orbit space identified with $\mathbb R_{\geq0}^n$ and that its isotropy class is constant over the preimage of the relative interior of each face. Then the action determines an isotropy assignment
\begin{equation*}
\ell(\sigma)=(Q^n_h),
\qquad
\mu(h)\in\mathrm{relint}(F_\sigma),
\end{equation*}
where $\mu$ denotes the orbit projection. When this assignment preserves the subclass relations, it defines an isotropy functor
\begin{equation*}
\ell:\mathrm{cat}([n])\longrightarrow\mathrm{conj}(Q^n).
\end{equation*}

\begin{definition}
A smooth action of $Q^n$ on $\mathbb H^n$ admitting an isotropy functor as described above is called a \emph{regular action} if its associated isotropy functor is acceptable.
\end{definition}

\begin{definition}\label{def:regular_corner}
The space $\mathbb H^n$ equipped with a regular action is called a \emph{regular $Q^n$-corner}.
\end{definition}

\subsection{Quoric manifolds}\label{sec:quoric-mnflds}

Here we recall the definition of quoric manifolds, following Hopkinson \cite{hopkinson2012quoric}, and introduce the geometric and combinatorial structures needed in the sequel. 

\begin{definition} [cf.~{\cite[Definition~4.2.1]{hopkinson2012quoric}}] \label{def:loc_reg}
Let $Q^n$ act smoothly on a smooth $4n$-manifold $M^{4n}$. A \emph{regular chart} is a pair $(U,\varphi)$, where $U\subseteq M^{4n}$ is a $Q^n$-invariant open subset and $\varphi\colon U\to\mathbb{H}^n$ is an equivariant diffeomorphism onto a regular $n$-corner, that is, $\varphi(t\cdot y)=t\cdot\varphi(y)$ for all $t\in Q^n$, $y\in U$, where $Q^n$ acts on $\mathbb{H}^n$ as in \cite[Definition 3.1.22]{hopkinson2012quoric}. The action is \emph{locally regular} if every point of $M^{4n}$ lies in a regular chart and the transition maps between overlapping charts are smooth.
\end{definition}

\begin{remark}
If $(U,\varphi)$ is a regular chart, then equivariance and injectivity of $\varphi$ give $(Q^n)_x=(Q^n)_{\varphi(x)}$ for all $x\in U$, so the isotropy subgroups of the action on $M^{4n}$ are those of the regular corner.
\end{remark}

We now define quoric manifolds.

\begin{definition}[cf. {\cite[Definition 4.2.2]{hopkinson2012quoric}}]\label{def:quoric}
Let $P^n$ be a simple polytope. A \emph{quoric manifold} over $P^n$ is a $4n$-dimensional manifold $M^{4n}$ equipped with a $Q^n$-action such that:
\begin{itemize}
\item[(1)] the action of $Q^n$ on $M^{4n}$ is locally regular (in the sense of Definition \ref{def:loc_reg});
\item[(2)] there is an orbit space projection map
\begin{equation*}\label{def:orbit-map}
\pi:M^{4n}\to P^n
\end{equation*} that is its fibres are  the $Q^n$-orbits. 
\end{itemize}
\end{definition}

\begin{remark}\label{rem:orbit-space-diff-polytope}
Condition (2) of Definition~\ref{def:quoric} says that $\pi$ is an orbit space projection, that is, it identifies the orbit space $M^{4n}/Q^n$ with $P^n$. Explicitly, if $p\colon M^{4n}\to M^{4n}/Q^n$ denotes the orbit map, then $\pi=\bar\pi\circ p$ for a homeomorphism $\bar\pi\colon M^{4n}/Q^n\to P^n$, so the following triangle commutes:
\begin{equation*}
\begin{tikzcd}[row sep=huge, column sep=huge]
  M^{4n} \arrow[r, "\pi"] \arrow[d, "p"'] & P^n \\
  M^{4n}/Q^n \arrow[ur, "\bar\pi"', "\cong"] &
\end{tikzcd}
\end{equation*}
\end{remark}

The prototypical example of a quoric manifold is the following.

\begin{example}\label{projective_quoric}
Let $\mathbb HP^n$ be the quaternionic projective space, the quotient of $\mathbb H^{n+1}\setminus\{0\}$ by right multiplication by $\mathbb H^*$. The action of $Q^n$ by left multiplication on the coordinates,
\begin{equation*}
(q_1,\dots,q_n)\cdot[h_0:h_1:\dots:h_n]=[h_0:q_1h_1:\dots:q_nh_n],
\end{equation*}
makes $\mathbb HP^n$ a $4n$-dimensional quoric manifold over the $n$-simplex
$\Delta^n$, with orbit space projection
\begin{equation*}
\pi([h_0:\dots:h_n])=(|h_0|^2,\dots,|h_n|^2).
\end{equation*}
See \cite[Example 4.2.5]{hopkinson2012quoric} for details.
\end{example}

As discussed in the preceding subsection, the orbit space of the standard coordinatewise $Q^n$-action on $\mathbb H^n$ is naturally identified with the positive orthant $\mathbb R_{\geq0}^n$. More generally, the orbit space of a regular $Q^n$-corner has the same local manifold-with-corners structure; see \cite[\S 3.1]{hopkinson2012quoric}. Consequently, if a $Q^n$-action on a smooth $4n$-manifold $M$ is locally regular, its orbit space $M/Q^n$ is locally modelled on open subsets of $\mathbb R_{\geq0}^n$ and hence carries the structure of an $n$-dimensional topological manifold with corners.

In the case of a quoric manifold over a simple polytope $P^n$, the orbit space is identified with $P^n$. We now recall the combinatorial structure of its faces, following \cite[p.74]{hopkinson2012quoric}.

Let $F_1,\ldots,F_m$ denote the facets of $P^n$. Since $P^n$ is simple, each face of codimension $k$ is the intersection of precisely $k$ facets containing it. Thus, every face can be written uniquely as
\begin{equation*}
F_\sigma=\bigcap_{i\in\sigma}F_i,
\qquad \sigma\subseteq[m],
\end{equation*}
provided that the intersection is nonempty, with $F_\emptyset=P^n$.

The faces of $P^n$, ordered by inclusion, form the \emph{face poset} $\cF(P^n)$. We denote by $\cF(P^n)$ the corresponding category, whose morphisms are face inclusions. The face poset of $P^n$ determines the orbit types of a locally regular $Q^n$-action through the isotropy assignment.

\begin{definition}\label{def:char_function}
A \emph{characteristic functor} on a simple polytope $P^n$ is a functor
\begin{equation*}
\lambda:
\cF(P^n)^{\mathrm{op}}
\longrightarrow
\mathrm{conj}(Q^n)
\end{equation*}
such that, for every vertex $v$ of $P^n$, its restriction to the faces containing $v$ is an acceptable isotropy functor, under the natural identification of the corresponding opposite face category with $\mathrm{cat}([n])$. The pair $(P^n,\lambda)$ is called a \emph{characteristic pair}.
\end{definition}

\begin{remark}
The acceptability condition for a characteristic functor $\lambda$ is local: for each vertex $v$ of $P^n$, the restriction of $\lambda$ to the faces containing $v$ is injective on objects. In particular, $\lambda$ need not be globally injective, since distinct faces not belonging to a common vertex neighbourhood may have the same isotropy class. Moreover, no surjectivity condition is imposed on $\lambda$; not every conjugacy class of subgroups of $Q^n$ need occur as the image of a face.
\end{remark}

Given a characteristic pair $(P^n,\lambda)$, one may construct a quoric manifold. To be precise, for each point $x\in P^n$, let $F_x$ denote the unique face whose relative interior contains $x$. Following \cite[Construction~4.2.8]{hopkinson2012quoric}, let $\lambda^b(F_x)<Q^n$ denote the canonical subgroup representing the isotropy class $\lambda(F_x)$. We use the compatible subgroup representatives $\lambda^b(F)\leq Q^n$ provided by the canonical construction, so that
\begin{equation*}
F\subseteq G
\quad\Longrightarrow\quad
\lambda^b(G)\subseteq\lambda^b(F).
\end{equation*}
Define an equivalence relation $\sim_\lambda$ on $Q^n\times P^n$ by
\begin{equation*}
(q,x)\sim_\lambda(q',x')
\quad\Longleftrightarrow\quad
x=x'
\text{ and }
q^{-1}q'\in\lambda^b(F_x).
\end{equation*}

\begin{definition}\label{def:canonical-model}
The quotient space
\begin{equation*}
\mathcal M(\lambda):=(Q^n\times P^n)/\!\sim_\lambda
\end{equation*}
is called the \emph{canonical model}, or \emph{derived space}, associated to the characteristic pair $(P^n,\lambda)$.
\end{definition}

The derived space $\cM(\lambda)$ is a quoric manifold with a $Q^n$-action defined by left multiplication on the first factor, while leaving the second factor unchanged; see \cite[Proposition 4.2.9]{hopkinson2012quoric}. Its orbit space $\cM(\lambda)/Q^n$ is naturally identified with $P^n$. Moreover, for a quoric manifold $M^{4n}$ over a simple polytope $P^n$ and a characteristic functor $\lambda$, there is a $Q^n$-equivariant diffeomorphism between $M^{4n}$ and its derived space $\mathcal{M}(\lambda)$; see \cite[Proposition 4.2.10]{hopkinson2012quoric}.

In the next lemma, we use the canonical model to prove an equivariant deformation retraction result.

\begin{lemma}[Equivariant deformation retraction]
\label{lemma:deformation-retract} Let $F$ be a face of the simple polytope $P^n$, and define its star by $U_F:=\bigcup_{G\succeq F}\mathring G\subseteq P^n$. Set $V_F:=\pi^{-1}(U_F)\subseteq M^{4n}$. Fix $f\in\mathring F$ and write
\begin{equation*}
O_f:=\pi^{-1}(f)\cong Q^n/\lambda^b(F).
\end{equation*}
There exists a $Q^n$-equivariant homotopy
\begin{equation*}
H\colon[0,1]\times V_F\longrightarrow V_F
\end{equation*}
such that
\begin{equation*}
H(1,\cdot)=\mathrm{id}_{V_F},
\qquad H(0,V_F)=O_f,
\qquad H(t,\cdot)|_{O_f}=\mathrm{id}_{O_f}
\end{equation*}
for all $t\in[0,1]$. Moreover, for every face $G\succeq F$,
\begin{equation*}
H_t\bigl(\overline{M_G}\cap V_F\bigr)
\subseteq \overline{M_G}\cap V_F
\qquad (t\in[0,1]),
\end{equation*}
where $M_G:=\pi^{-1}(\mathring G)$. 
\end{lemma}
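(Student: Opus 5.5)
We work in the canonical model: by \cite[Proposition 4.2.10]{hopkinson2012quoric} there is a $Q^n$-equivariant diffeomorphism $M^{4n}\cong\mathcal M(\lambda)=(Q^n\times P^n)/\!\sim_\lambda$ commuting with the orbit maps. It therefore suffices to construct $H$ on $\mathcal M(\lambda)$. Under this identification $V_F=(Q^n\times U_F)/\!\sim_\lambda$ and $O_f=(Q^n\times\{f\})/\!\sim_\lambda$. The strategy is to lift a deformation retraction of the star $U_F$ onto $f$ by letting it act only on the second factor, $H(t,[q,x])=[q,h_t(x)]$.

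\textbf{Step 1 (a retraction of the star).} We need a homotopy $h\colon[0,1]\times U_F\to U_F$ with $h_1=\mathrm{id}$, $h_0\equiv f$ and $h_t(f)=f$. The key requirement is that it be \emph{face-monotone}: if $x\in\mathring G$ with $G\succeq F$, then $h_t(x)$ lies in some $\mathring{G'}$ with $F\preceq G'\preceq G$. The natural first choice is the straight-line contraction $h_t(x)=f+t(x-f)$. By convexity it maps $\overline G\cap U_F$ into $\overline G$, and since $f\in\overline G$ it preserves each closed face containing $F$. If $x\in\mathring G$ and $t>0$, the point $h_t(x)$ lies on the open segment from $f$ to $x$; because $f\in\overline G$, this segment is contained in $\mathring G$. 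Hence $h_t(\mathring G)\subseteq\mathring G$ for $t>0$, and $h_0\equiv f\in\mathring F$. Since $U_F$ is the union of the relative interiors of the faces containing $F$, the image stays in $U_F$. Continuity of $h$ in $(t,x)$ is clear.

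\textbf{Step 2 (well-definedness and continuity of the lift).} Set $H(t,[q,x])=[q,h_t(x)]$. To see this is well defined, suppose $(q,x)\sim_\lambda(q',x)$, so $q^{-1}q'\in\lambda^b(F_x)$. The face $F_{h_t(x)}$ is either $F_x$ (for $t>0$) or $F\preceq F_x$ (for $t=0$). The compatibility $F\subseteq G\Rightarrow\lambda^b(G)\subseteq\lambda^b(F)$ gives $\lambda^b(F_x)\subseteq\lambda^b(F_{h_t(x)})$, so $(q,h_t(x))\sim_\lambda(q',h_t(x))$. For continuity, note that $[0,1]\times(Q^n\times U_F)\to[0,1]\times V_F$ is a quotient map: the quotient map $Q^n\times U_F\to V_F$ is closed because $Q^n$ is compact, and taking the product with the locally compact space $[0,1]$ preserves quotient maps. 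The lift $(t,q,x)\mapsto[q,h_t(x)]$ is continuous, so $H$ is continuous. Equivariance is immediate, since $Q^n$ acts on the first factor and $h$ acts only on the second.

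\textbf{Step 3 (the stated properties).} We have $H_1=\mathrm{id}$. Since $h_0\equiv f$, $H_0(V_F)=O_f$. Since $h_t(f)=f$, $H_t$ restricts to the identity on $O_f$. Finally, $\overline{M_G}\cap V_F=\pi^{-1}(\overline G\cap U_F)$, and $h_t$ preserves $\overline G\cap U_F$ by Step 1, which gives the stratum-closure statement. The identification $O_f\cong Q^n/\lambda^b(F)$ follows directly from the definition of $\sim_\lambda$.

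\textbf{Main obstacle.} The delicate point is Step 2, namely that collapsing onto a smaller face respects $\sim_\lambda$. This is exactly where face-monotonicity of $h$ and the inclusion-reversing choice of representatives $\lambda^b$ are needed. One must use the canonical compatible representatives, not arbitrary conjugates, since for arbitrary representatives the required subgroup inclusions could fail. A secondary issue is that the equivariant diffeomorphism of \cite[Proposition 4.2.10]{hopkinson2012quoric} must be compatible with $\pi$ (a homeomorphism would suffice for the lemma). We take this compatibility from the cited construction.
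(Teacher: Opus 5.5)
Your proposal is correct and follows essentially the same route as the paper: pass to Hopkinson's canonical model, contract the star linearly onto $f$, and lift via $H(t,[q,x])=[q,h(t,x)]$, using the inclusion-reversing representatives $\lambda^b$ for well-definedness at $t=0$. The differences are cosmetic. The paper checks $h(t,\mathring K)\subseteq\mathring K$ for $t>0$ with the facet inequalities instead of the relative-interior segment lemma, and it gets the quotient map from openness of the saturated set $Q^n\times U_F$ instead of closedness. It also proves the identity $\overline{M_G}\cap V_F=\pi^{-1}(G)\cap V_F$ (via $G=\overline{\mathring G}$), which you assert without argument.
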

\begin{proof}
Recall from Definition~\ref{def:canonical-model} that
\begin{equation*}
\mathcal M(\lambda)=(Q^n\times P^n)/\!\sim,
\end{equation*}
where $(q,x)\sim(q',x)$ if $x\in\mathring K$ for some face $K$ and $q^{-1}q'\in\lambda^b(K)$. Since $M^{4n}$ is diffeomorphic to $\mathcal M(\lambda)$, we identify these spaces via the given diffeomorphism. Let
\begin{equation*}
p\colon Q^n\times P^n\longrightarrow\mathcal M(\lambda),
\qquad p(q,x)=[q,x],
\end{equation*}
denote the quotient projection. Under this identification, the $Q^n$-action is given by $s\cdot[q,x]=[sq,x]$, and $\pi([q,x])=x$. Define
\begin{equation*}
h\colon[0,1]\times U_F\longrightarrow U_F,
\qquad h(t,x)=tx+(1-t)f.
\end{equation*}
The set $U_F$ is convex and contains $f$, so $h$ is well-defined. It satisfies
\begin{equation*}
h(1,x)=x,\qquad h(0,x)=f,\qquad h(t,f)=f.
\end{equation*}
Furthermore, let $x\in\mathring K$ with $K\succeq F$, so $f\in F\subseteq K$. Write $P^n=\{\ell_i\ge0\}_{i\in[m]}$ for the facet inequalities. If $K\subseteq F_i$, then $\ell_i(x)=\ell_i(f)=0$, so $\ell_i(h(t,x))=0$. If $K\not\subseteq F_i$, then $\ell_i(x)>0$ and $\ell_i(f)\ge0$, so $\ell_i(h(t,x))=t\ell_i(x)+(1-t)\ell_i(f)>0$ for $t>0$. Thus $h(t,x)$ lies on exactly the same facets as $x$, that is, $h(t,x)\in\mathring K$ for all $t>0$.

Now, we define
\begin{equation*}
H(t,[q,x]):=[q,h(t,x)].
\end{equation*}
For well-definedness, let $[q,x]=[q',x']$. Then $x=x'\in\mathring K$ for some face $K\succeq F$, and $q^{-1}q'\in\lambda^b(K)$. If $t>0$, then $h(t,x)\in\mathring K$, so
\begin{equation*}
[q,h(t,x)]=[q',h(t,x)].
\end{equation*}
If $t=0$, then $h(0,x)=f\in\mathring F$. The compatibility assumption
\begin{equation*}
\lambda^b(K)\subseteq\lambda^b(F)
\qquad(F\preceq K)
\end{equation*}
therefore gives $[q,f]=[q',f]$. Hence $H$ is well-defined.

Next, we show the $Q^n$-equivariance.  For $s\in Q^n$, we have
\begin{equation*}
H(t,s\cdot[q,x]) =H(t,[sq,x])=[sq,h(t,x)]=s\cdot[q,h(t,x)]=s\cdot H(t,[q,x]).
\end{equation*}

For continuity, set $p_F:=p|_{Q^n\times U_F}\colon Q^n\times U_F\to V_F$. Since $U_F$ is open in $P^n$ and $\sim_\lambda$ only identifies points with the same $P^n$-coordinate, $Q^n\times U_F$ is an open saturated subset of $Q^n\times P^n$, so $p_F$ is a quotient map. As $[0,1]$ is locally compact Hausdorff, $\mathrm{id}_{[0,1]}\times p_F$ is a quotient map as well. The map
\begin{equation*}
\widetilde H\colon[0,1]\times Q^n\times U_F\longrightarrow V_F,\qquad
\widetilde H(t,q,x)=p(q,h(t,x)),
\end{equation*}
is continuous, and it is constant on the fibres of $\mathrm{id}_{[0,1]}\times p_F$ by the well-definedness argument above. Hence it descends to a continuous map $H\colon[0,1]\times V_F\to V_F$.

Finally, at the endpoints, we have
\begin{equation*}
H(1,[q,x])=[q,x], \qquad H(0,[q,x])=[q,f].
\end{equation*}
and as a result,
\begin{equation*}
H(1,\cdot)=\mathrm{id}_{V_F}, \qquad H(0,V_F)=\pi^{-1}(f)=O_f.
\end{equation*}
Since $h(t,f)=f$, the homotopy fixes $O_f$ pointwise for all $t\in[0,1]$. This proves that $H$ is a $Q^n$-equivariant deformation retraction. Finally, let $G\succeq F$. We claim that
\begin{equation*}
\overline{M_G}\cap V_F=\overline{\pi^{-1}(\mathring{G})}\cap V_F = \pi^{-1}(G)\cap V_F.
\end{equation*}
Indeed, since $\pi$ is continuous and $G$ is closed in $P^n$, we have $\overline{M_G}\subseteq\pi^{-1}(G)$. Conversely, using $G=\overline{\mathring G}$, we obtain
\begin{equation*}
\pi^{-1}(G) =p(Q^n\times G) \subseteq\overline{p(Q^n\times\mathring G)} =\overline{M_G}.
\end{equation*}
Thus, the claimed equality follows.

Now, let $[q,x]\in\overline{M_G}\cap V_F$. Then $x\in G$, and since $f\in F\subseteq G$, the convexity of $G$ implies that
\begin{equation*}
h(t,x)=tx+(1-t)f\in G \qquad\text{for all }t\in[0,1].
\end{equation*}
Moreover, $h(t,x)\in U_F$. Hence
\begin{equation*}
H(t,[q,x]) =[q,h(t,x)] \in\pi^{-1}(G)\cap V_F =\overline{M_G}\cap V_F.
\end{equation*}
It follows that
\begin{equation*}
H_t\bigl(\overline{M_G}\cap V_F\bigr) \subseteq\overline{M_G}\cap V_F
\end{equation*}
for every face $G\succeq F$ and every $t\in[0,1]$. This completes the proof.
\end{proof}

\subsection{Finite spaces}\label{sec:finite-spaces}

The orbit space of a quoric manifold $M^{4n}$ is a simple polytope $P^n$. The set of faces $\cF(P^n)$ with partial order the face inclusion is a poset. As we explain in this section, the face poset $(\cF(P^n),\preceq)$ naturally gives rise to a finite topological space. Finite topological spaces will play a fundamental role in Section~\ref{sec:main-section} where we describe the equivariant derived category of constructible sheaves on a quoric manifold. We start with the necessary definitions from \cite{MayFiniteSpaces2008}.

\begin{definition}
Let $(X,\tau)$ be a topological space. Then, $X$ will be called 
\begin{itemize}
    \item[(1)] a \textbf{finite topological space} if the underlying set $X$ is finite;
    \item[(2)] an \textbf{Alexandrov  topological space} if $\tau$ is closed under arbitrary intersections.
\end{itemize}
\end{definition}

Every finite topological space is an Alexandrov space, see \cite[Lemma~1.3]{MayFiniteSpaces2008}. More precisely, there is a bijective correspondence between preorders on a finite set $X$ and Alexandrov topologies on $X$, under which a preorder $\le$ corresponds to the topology whose minimal open neighbourhood of each point $x\in X$ is
\begin{equation*}
U_x = \{\, y \in X \mid x \le y \,\}.
\end{equation*}

Let $M^{4n}$ be a quoric manifold with characteristic pair $(P^n,\lambda)$. Let $\cF(P^n)$ denote the poset of nonempty faces of $P^n$, ordered by inclusion:
\begin{equation*}
F\preceq G \quad\Longleftrightarrow\quad
F\subseteq G.
\end{equation*}
We equip $\cF(P^n)$ with the corresponding upper Alexandrov topology and denote the resulting finite space by $\cF_{\preceq}$. Its minimal open neighborhoods are
\begin{equation*}
U_F=\{G\in\cF(P^n)\mid F\subseteq G\}.
\end{equation*}
\noindent
Although the topology of $\cF_{\preceq}$ depends only on $P^n$, the characteristic functor $\lambda$ equips this finite space with additional isotropy data. 

The following example illustrates this topology.

\begin{example}\label{ex:HP2}
Let $M=\mathbb{HP}^2$ with orbit space $P^2=\Delta^2$ the standard $2$-simplex, with vertices $v_0,v_1,v_2$ and edges $e_{01},e_{02},e_{12}$. The face poset $\cF(\Delta^2)$ consists of seven elements: the three vertices, the three edges, and the top face $F=\Delta^2$ itself, ordered by inclusion, namely $v_i \subset e_{ij} \subset F$ for $i,j\in\{0,1,2\}$ with $i\neq j$. The incidence relations in $\cF(\Delta^2)$ can be visualized  Hasse diagram below.

\begin{center}
\begin{tikzpicture}[
    every node/.style={font=\small},
    vertex/.style={circle, fill, inner sep=1.5pt},
    face/.style={draw, rounded corners, inner sep=4pt}
]
    \node[face] (F) at (0,3) {$\Delta^2$};

    \node[face] (e01) at (-3,1.5) {$e_{01}$};
    \node[face] (e02) at (0,1.5) {$e_{02}$};
    \node[face] (e12) at (3,1.5) {$e_{12}$};

    \node[face] (v0) at (-3,0) {$v_0$};
    \node[face] (v1) at (0,0) {$v_1$};
    \node[face] (v2) at (3,0) {$v_2$};

    \draw (v0) -- (e01);
    \draw (v1) -- (e01);

    \draw (v0) -- (e02);
    \draw (v2) -- (e02);

    \draw (v1) -- (e12);
    \draw (v2) -- (e12);

    \draw (e01) -- (F);
    \draw (e02) -- (F);
    \draw (e12) -- (F);
\end{tikzpicture}
\end{center}

For example, some of the minimal open neighborhoods are

\begin{equation*}
    U_{v_0}=\{v_0,e_{01},e_{02},F\},\qquad U_{e_{01}}=\{e_{01},F\},\qquad U_F=\{F\}.
\end{equation*}
\end{example}

\begin{lemma}\label{lem:Alexandrov-T0}
The Alexandrov space $\cF_{\preceq}$ associated to the face poset $(\cF(P^n),\preceq)$ is a $T_0$ space. In other words, for any two distinct points of $\cF_{\preceq}$, there exists an open neighbourhood of one that does not contain the other.
\end{lemma}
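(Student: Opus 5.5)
The plan is to reduce the $T_0$ property to the antisymmetry of the face poset. In the Alexandrov topology on a finite preordered set, the minimal open neighbourhood of a point $x$ is $U_x=\{y\mid x\le y\}$. Two points $x,y$ therefore fail to be separated in the $T_0$ sense exactly when $y\in U_x$ and $x\in U_y$, since every open set containing $x$ contains $U_x$, and similarly for $y$. So I would first record the standard observation, from the correspondence between preorders and Alexandrov topologies recalled above, that an Alexandrov space is $T_0$ if and only if its preorder is antisymmetric, that is, a partial order.

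Concretely, I take distinct faces $F\neq G$ of $P^n$ and argue by contradiction. Suppose every open set containing $F$ contains $G$ and vice versa. Applying this to the minimal open neighbourhoods gives $G\in U_F$ and $F\in U_G$. By the description
\begin{equation*}
U_F=\{G'\in\cF(P^n)\mid F\subseteq G'\},
\end{equation*}
these two memberships mean $F\subseteq G$ and $G\subseteq F$. Hence $F=G$ as subsets of $P^n$, contradicting $F\neq G$. It follows that one of $U_F$, $U_G$ is an open neighbourhood of one point not containing the other.

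The only point needing care is that $\preceq$ is genuinely antisymmetric on $\cF(P^n)$. This holds because faces are identified with subsets of $P^n$ and $\preceq$ is set inclusion. Equivalently, in the indexing $F_\sigma=\bigcap_{i\in\sigma}F_i$, distinct nonempty faces are distinct subsets, since $\sigma$ is determined uniquely by simplicity. I do not expect any real obstacle: the argument is a direct unwinding of the definitions. One could additionally note that $U_F\neq U_G$ for $F\neq G$, so the map $F\mapsto U_F$ is injective, which is another phrasing of the same fact.
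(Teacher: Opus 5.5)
Your proof is correct and follows the same route as the paper: the paper simply cites \cite[Lemma~1.14]{MayFiniteSpaces2008}, that the Alexandrov space of a partially ordered set is $T_0$, and your argument ($G\in U_F$ and $F\in U_G$ force $F\subseteq G\subseteq F$, hence $F=G$) is exactly the proof of that cited lemma written out inline. The extra remark about simplicity and the indexing $F_\sigma$ is not needed, since faces are subsets of $P^n$ and $\preceq$ is inclusion.
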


\begin{proof}
The Alexandrov topology on $\cF_{\preceq}$ is induced by the partial order on the face poset of $P^n$. The result therefore follows directly from \cite[Lemma 1.14]{MayFiniteSpaces2008}, which states that the Alexandrov space associated to a partially ordered set is a $T_0$ space.
\end{proof}
\medskip

Given any poset $(\mathcal P,\preceq)$, its \emph{order complex} $\Delta(\mathcal P,\preceq)$ is the abstract simplicial complex whose $k$-simplices are the chains $x_0 \prec x_1 \prec \cdots \prec x_k$ in $\mathcal P$. Applied to the face poset of a polytope, $\Delta(\cF(P^n),\preceq)$ is exactly the \emph{barycentric subdivision} of $P^n$: a chain of faces $F_0\prec\cdots\prec F_k$ corresponds to the simplex spanned by the barycentres of $F_0,\dots,F_k$. In particular, its geometric realisation is  homeomorphic to $P^n$ itself; symbolically
\begin{equation}
P^n \;\cong\; \big|\Delta(\cF(P^n),\preceq)\big|. \label{eq:barycentric}
\end{equation}

In \cite{McCord66}, the geometric realisation of order complex of \emph{any} poset is shown to be weakly homotopically equivalent to the poset itself, equipped with the Alexandrov topology. To be more precise, the map
\begin{equation*}
\mu_{\mathrm{MC}} \colon \big|\Delta(\mathcal P,\preceq)\big| \longrightarrow (\mathcal P,\preceq)
\end{equation*}
sending the interior of the simplex for a chain $x_0\prec\cdots\prec x_k$ to its maximum $x_k$ is a weak homotopy equivalence, see \cite[Theorem~1]{McCord66}. 

\begin{lemma}\label{lem:mccord-open}
The map $\mu_{\mathrm{MC}}\colon |\Delta(\cF(P^n),\preceq)| \to (\cF(P^n),\preceq)$ is open.
\end{lemma}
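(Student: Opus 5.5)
Since the topology on $(\cF(P^n),\preceq)$ is the upper Alexandrov topology, every open set is a union of minimal open neighbourhoods $U_F=\{G\mid F\preceq G\}$. A subset $A$ of a finite poset is open exactly when it is an up-set. So it suffices to show that for every open $W\subseteq|\Delta(\cF(P^n),\preceq)|$, the image $\mu_{\mathrm{MC}}(W)$ is an up-set. Concretely, the statement to prove is: if $x\in W$ with $\mu_{\mathrm{MC}}(x)=F$ and $F\prec G$, then $W$ contains a point $y$ with $\mu_{\mathrm{MC}}(y)=G$.

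The key step is a local analysis at $x$. The point $x$ lies in the open simplex of a chain $F_0\prec\cdots\prec F_k=F$, so it has barycentric coordinates $x=\sum_{i=0}^k t_i b_{F_i}$ with all $t_i>0$. Given $G\succ F$, the chain $F_0\prec\cdots\prec F_k\prec G$ is again a chain in $\cF(P^n)$. It therefore spans a simplex $\sigma'$ of $\Delta(\cF(P^n),\preceq)$ having the simplex of $x$ as a face. For small $\varepsilon>0$ consider
\begin{equation*}
y_\varepsilon=(1-\varepsilon)x+\varepsilon\, b_G .
\end{equation*}
This point lies in the open simplex of $\sigma'$, since all its barycentric coordinates with respect to $F_0,\dots,F_k,G$ are positive. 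Hence $\mu_{\mathrm{MC}}(y_\varepsilon)=G$, because $G$ is the maximum of that chain. Moreover $y_\varepsilon\to x$ inside the closed simplex $\sigma'$, and the topology of the geometric realisation restricts to the Euclidean topology on each closed simplex. So $y_\varepsilon\in W$ for $\varepsilon$ small, since $W\cap\sigma'$ is open in $\sigma'$ and contains $x$.

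This gives $G\in\mu_{\mathrm{MC}}(W)$, so $\mu_{\mathrm{MC}}(W)$ is an up-set and hence open. One could equivalently phrase the argument via the homeomorphism \eqref{eq:barycentric} with $P^n$ and the map $\rho_P$, but the simplicial argument is self-contained.

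The only point needing care is the topology of the realisation. Here the complex is finite, so it is a compact polyhedron in Euclidean space, and no weak topology issues arise. The approach through straight segments toward $b_G$ then works directly, and I expect no real obstacle beyond correctly identifying the open simplex containing $y_\varepsilon$.
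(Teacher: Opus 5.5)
Your proof is correct and takes the same route as the paper's: you reduce openness to showing that $\mu_{\mathrm{MC}}(W)$ is an up-set, then append $G$ to the chain of $x$ and find a point of $W$ near $x$ in the open simplex of the enlarged chain. Your explicit path $y_\varepsilon=(1-\varepsilon)x+\varepsilon b_G$ simply makes concrete the paper's appeal to the fact that a face lies in the closure of the open simplex of any larger simplex containing it.
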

\begin{proof}
Let $O\subseteq  |\Delta(\cF(P^n),\preceq)|$ be an open set. We will show that $\mu_{\mathrm{MC}}(O)$is an up-set in $(\cF(P^n),\preceq)$.

Suppose $F\in\mu_{\mathrm{MC}}(O)$. Then there exists $q\in O$ lying in the relative interior of a simplex corresponding to a chain
\begin{equation*}
F_0\prec\cdots\prec F_k=F.
\end{equation*}
Let $G\succeq F$. If $G=F$, then $G\in\mu_{\mathrm{MC}}(O)$ already. Otherwise,  appending $G$ gives the chain
\begin{equation*}
F_0\prec\cdots\prec F_k=F\prec G,
\end{equation*}
which determines a simplex having the original simplex as a face.

 In the standard geometric realisation, the original simplex is contained in the closure of the relative interior of this larger simplex. Consequently, every neighborhood of $q$ intersects the relative interior of the larger simplex. Since $O$ is open and $q\in O$, there exists a point $q'\in O$ in that relative interior. By the definition of $\mu_{\mathrm{MC}}$, we have
 \begin{equation*}
    \mu_{\mathrm{MC}}(q')=G.
 \end{equation*}
Hence $G\in\mu_{\mathrm{MC}}(O)$.    We have shown that $F\in\mu_{\mathrm{MC}}(O)$ and $F\preceq G$ imply $G\in\mu_{\mathrm{MC}}(O)$. Thus $\mu_{\mathrm{MC}}(O)$ is an up-set, and therefore open in the upper Alexandrov topology. It follows that $\mu_{\mathrm{MC}}$ is open.
\end{proof}

We regard $\cF(P^n)$ as the finite Alexandrov space associated with the face-inclusion order $\preceq$. The following proposition relates this geometric description to the barycentric subdivision and the McCord map introduced above. 

\begin{proposition}\label{prop:polytope-to-poset}
The map $\rho_{P}\colon P^n \to (\cF(P^n),\preceq)$ defined by the composition
\begin{equation}\label{def:rho-P-to-F}
P^n \;\xrightarrow[\ \cong\ ]{\eqref{eq:barycentric}}\; \big|\Delta(\cF(P^n),\preceq)\big| \;\xrightarrow{\ \mu_{\mathrm{MC}}\ }\; (\cF(P^n),\preceq)
\end{equation} 
is  continuous and open.
\end{proposition}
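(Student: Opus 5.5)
The plan is to treat $\rho_P$ as a composition and check each factor. The homeomorphism \eqref{eq:barycentric} is continuous and open. By McCord's theorem \cite[Theorem~1]{McCord66}, $\mu_{\mathrm{MC}}$ is continuous, and by Lemma~\ref{lem:mccord-open} it is open. A composition of continuous open maps is continuous and open, which gives the statement. The only real work is to make sure the composite is the intended map, namely $\rho_P(x)=F$ for $x\in\mathring F$, and to have a direct check of continuity in case one does not want to lean on McCord.

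First I identify the composite. Use the homeomorphism \eqref{eq:barycentric} sending the vertex of $\Delta(\cF(P^n),\preceq)$ indexed by $G$ to the barycentre $b_G$, extended affinely on each simplex. The open simplex for a chain $F_0\prec\cdots\prec F_k$ consists of points $\sum t_i b_{F_i}$ with all $t_i>0$. Such a point lies in the relative interior of $F_k$, because $b_{F_k}\in\mathring F_k$, the other $b_{F_i}$ lie in $F_k$, and a convex combination with positive weight on a relative-interior point stays in the relative interior. Hence $\mu_{\mathrm{MC}}$ of the point is $F_k$, which is exactly the face whose relative interior contains it. So $\rho_P(x)=F_x$.

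Continuity can also be checked directly. Since the topology on $\cF_\preceq$ is generated by the minimal open sets $U_F$, it suffices to show that each
\begin{equation*}
\rho_P^{-1}(U_F)=\bigcup_{G\succeq F}\mathring G
\end{equation*}
is open in $P^n$. Write $P^n=\{\ell_i\ge0\}$ and $F=\bigcap_{i\in\sigma}F_i$. A point $x$ lies in some $\mathring G$ with $G\supseteq F$ exactly when $x$ lies on no facet other than those indexed by $\sigma$, that is, when $\ell_j(x)>0$ for all $j\notin\sigma$. Here I use that the facets containing $G$ are a subset of those containing $F$, together with simplicity. This set is a finite intersection of open sets, so it is open; it is the star $U_F$ already used in Lemma~\ref{lemma:deformation-retract}.

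Openness can be checked directly in the same way. An open $O\subseteq P^n$ maps to an up-set: if $x\in O\cap\mathring F$ and $G\succeq F$, then points $x+\epsilon(b_G-x)$ lie in $\mathring G$ for small $\epsilon>0$ and remain in $O$. The main obstacle is the identification step in the second paragraph, namely that the McCord map transported through the barycentric homeomorphism coincides with the face-of-relative-interior map. Once that holds, everything else is formal.
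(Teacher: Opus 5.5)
Your proposal is correct and follows the paper's argument: you identify $\rho_P$ with $\mu_{\mathrm{MC}}$ under the barycentric homeomorphism, using that the open simplex of a chain $F_0\prec\cdots\prec F_k$ lies in $\mathring F_k$, and then invoke McCord's theorem for continuity and Lemma~\ref{lem:mccord-open} for openness. Your additional direct checks, namely $\rho_P^{-1}(U_F)=\{\ell_j>0,\ j\notin\sigma\}$ and $x+\epsilon(b_G-x)\in\mathring G$ for small $\epsilon>0$, are also correct, and they give a self-contained proof that avoids McCord's theorem and its order conventions altogether.
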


\begin{proof}
Under the identification $P^n\cong|\Delta(\cF(P^n),\preceq)|$, the map $\rho_P$ agrees with the McCord map $\mu_{\mathrm{MC}}$. Indeed, the relative interior of a simplex in the barycentric subdivision corresponding to a chain $F_0\prec\cdots\prec F_k$ lies in $\mathring F_k$.

The McCord map is a weak equivalence, \cite[Theorem~1]{McCord66}, hence a continuous map by definition, and open by Lemma~\ref{lem:mccord-open}. Therefore $\rho_P$ is continuous and open.
\end{proof}

\subsection{Stratifications}

Here we discuss stratifications defined on a quoric manifold. First we recall the definition of a stratification of a topological space from \cite{Woolf26} which underlines any notion of stratification.

\begin{definition}\label{def: stratification}
A stratification of a topological space $X$ is a set $\{X_i \mid i \in I\}$ of non-empty, disjoint subspaces $X_i$, referred to as the \emph{strata} of the decomposition, such that $\bigcup_{i \in I} X_i = X$, and the following conditions are satisfied:
\begin{itemize}
  \item[(1)] \textbf{Local finiteness:} each $x \in X$ has an open neighborhood $U_x$
  intersecting only finitely many strata, i.e., $\{\, i \in I \mid U_x \cap X_i \neq \varnothing \,\}$ is finite.
  \item[(2)] \textbf{Local closure:} each $X_i$ is locally closed, i.e., an intersection of an open and a closed set.
  
  \item[(3)] \textbf{Frontier condition:} if $X_i \cap \overline{X_j} \neq \varnothing$ then
  $X_i \subset \overline{X_j}$. Equivalently, the closure of each stratum is a union of strata.
\end{itemize}
\end{definition}

Let $M^{4n}$ be a quoric manifold. It is a compact smooth manifold, and the $Q^n$-action is smooth, so the orbits decompose $M^{4n}$ into submanifolds, one for each orbit. Since $Q^n$ is compact, the action is proper, and the stabilisers $Q^n_x$ are closed subgroups, well defined up to conjugacy along each orbit. Grouping points whose stabilisers lie in the same conjugacy class $(H)$ gives the orbit-type decomposition, and taking connected components of its pieces gives the orbit-type stratification. For a smooth proper action this is a Whitney stratification \cite[Section~2.7]{duistermaat1999lie}. Compactness of $M^{4n}$ implies that only finitely many orbit types occur, so the stratification is finite. Each orbit-type stratum has an explicit description as a associated bundle over a fixed-point set \cite[Theorem~1.31]{Meinrenken2003GroupActions}. We summarise this in the following result.

\begin{proposition}\label{prop:orbit-type-stratification}
For a quoric manifold $M^{4n}$ the following hold.
\begin{enumerate}
    \item[(1)] $M^{4n}=\bigsqcup_{(H)}M_{(H)}$ is a finite decomposition, where $M_{(H)}=\{m\in M^{4n}\mid (Q^n_m)=(H)\}$;
    \item[(2)] $M^{4n}$ is Whitney stratified by the connected components of the $M_{(H)}$;
    \item[(3)] $M_{(H)}\cong M_H\times_K Q^n/H $ via $[(m,qH)]\mapsto q\cdot m$, where $M_H:=\{m \in M^{4n}|Q^n_{m}=H\}$ $K:=N_{Q^n}(H)/H$ acts on $Q^n/H$ by $[n]\cdot qH:=qn^{-1}H$.
\end{enumerate}
\end{proposition}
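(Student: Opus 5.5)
The three assertions are general facts about smooth proper actions, specialised to the quoric case. I would prove them in order, each time reducing to standard results and supplying only the quoric input, namely the canonical model $\cM(\lambda)$ and the finiteness of the face poset.

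For (1), the decomposition $M^{4n}=\bigsqcup_{(H)}M_{(H)}$ is tautological, since every point has a stabiliser with a well-defined conjugacy class. The work is finiteness. I would argue combinatorially rather than through compactness. Identify $M^{4n}$ with $\cM(\lambda)$ by \cite[Proposition 4.2.10]{hopkinson2012quoric}. For a point $[q,x]$ with $x\in\mathring F_x$, the stabiliser is $q\lambda^b(F_x)q^{-1}$. Hence the isotropy class of $[q,x]$ is $\lambda(F_x)$, and the set of isotropy classes that occur is $\lambda(\cF(P^n))$. This set is finite because $P^n$ has finitely many faces. As a by-product, $M_{(H)}=\pi^{-1}\bigl(\bigcup_{\lambda(F)=(H)}\mathring F\bigr)$, which will be used again in Corollary~\ref{cor:face-orbit-type-comparison}.

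For (2), I would invoke \cite[Section~2.7]{duistermaat1999lie}. For a smooth action of a compact Lie group on a smooth manifold, the connected components of the orbit-type pieces form a Whitney stratification. The local model is the slice theorem, $G\times_H V$, and on this model the orbit types are the $H$-orbit types of the linear representation $V$. The hypotheses hold here: $Q^n$ is compact, the action is smooth, and $M^{4n}$ is a smooth manifold. Local finiteness already follows from (1), and the frontier condition is part of the cited result.

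For (3), I would verify the map of \cite[Theorem~1.31]{Meinrenken2003GroupActions} directly. Define $\Phi\colon M_H\times Q^n/H\to M_{(H)}$ by $(m,qH)\mapsto q\cdot m$. This is well defined because $H$ fixes $m$. It is surjective because any $m'\in M_{(H)}$ has stabiliser $qHq^{-1}$ for some $q$, and then $q^{-1}m'\in M_H$. It is invariant under the $K$-action $(m,qH)\mapsto(nm,qn^{-1}H)$, since $N_{Q^n}(H)$ preserves $M_H$. For injectivity modulo $K$, suppose $q\cdot m=q'\cdot m'$. Then $n:=q'^{-1}q$ carries $m$ to $m'$, so $nHn^{-1}=H$ and $n\in N_{Q^n}(H)$. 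This identifies $(m,qH)$ and $(m',q'H)$ up to the $K$-action.

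The main obstacle is showing that the induced continuous bijection $M_H\times_K Q^n/H\to M_{(H)}$ is a homeomorphism, or a diffeomorphism in the smooth sense. I would handle this with the slice theorem. Near $m\in M_H$ it gives a local section, and so a local inverse. Alternatively, one can note that $M_H\times Q^n/H$ is compact over each compact piece and that $\Phi$ is a quotient map, because $Q^n$ is compact. A further point is that $K$ acts freely, since $N(H)/H$ acts freely on $Q^n/H$ from the right. Together with the slice theorem, this gives the smooth structure on the quotient.
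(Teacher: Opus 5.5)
Your proposal is correct. For (2) and (3) it uses the same two citations as the paper: Duistermaat--Kolk for the Whitney property and Meinrenken's Theorem~1.31 for the associated-bundle description. The paper simply states the proposition as a summary of these standard facts about smooth proper actions. Your hand check that $\Phi$ is well defined, surjective, $K$-invariant and injective modulo $K$ goes beyond what the paper writes down.

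The genuine difference is in (1).
\begin{itemize}
\item The paper gets finiteness of orbit types from compactness of $M^{4n}$, through the general theorem that a smooth action of a compact Lie group on a compact manifold has finitely many orbit types.
\item You read the stabilisers off the canonical model, $(Q^n)_{[q,x]}=q\lambda^b(F_x)q^{-1}$, so the occurring classes are exactly $\lambda(\cF(P^n))$.
\end{itemize}
The paper's route uses nothing about the quoric structure. Yours depends on Hopkinson's equivariant identification $M^{4n}\cong\cM(\lambda)$, or at least on the constancy of the isotropy class over each $\mathring F$, which the regular charts already give. In return it yields more. It identifies the orbit types, and it exhibits each $M_{(H)}$ as a finite union of the connected sets $M_F$, which is the first half of Corollary~\ref{cor:face-orbit-type-comparison}. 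It therefore also shows that each $M_{(H)}$ has finitely many components. That is what actually justifies your remark that local finiteness in (2) follows from (1); finiteness of the set of classes by itself says nothing about the number of components.

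In (3), your ``alternatively'' argument can be made precise as follows. The set $M_H=M^H\cap M_{(H)}$ is closed in $M_{(H)}$. Since $Q^n$ is compact, the action map $Q^n\times M_H\to M_{(H)}$ is therefore closed, hence a quotient map. The continuous bijection $M_H\times_K Q^n/H\to M_{(H)}$, through which this map factors, is then a homeomorphism. The slice theorem is only needed to upgrade this to a diffeomorphism.
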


Next, we will define another stratification on the quoric manifold induced from the natural stratification of the orbit space which is identified with a simple $n$-polytope. Recall from Remark~\ref{rem:orbit-space-diff-polytope}, that the orbit space of a quoric manifold is homeomorphic to a simple polytope as topological space with  as a manifold with corners. Combined with the discussion above,  the relevant maps fit into the commutative diagram
\begin{equation}
\begin{tikzcd}[column sep=large]
M^{4n}
\arrow[r,"\pi"]
\arrow[dr,"\pi_{\cF}"']
&
P^n
\arrow[d,"\rho_P"]
\\
&
\cF_{\preceq}
\end{tikzcd}
\end{equation} where $\pi_{\cF}:=\rho_P\circ\pi$. Recent work \cite{Woolf26}, relates posets and stratifications. 

\begin{definition}\cite{Woolf26} A poset-stratified space is a continuous map $\pi\colon X \to I_{\preceq}$ from a topological space $X$ to the Alexandrov space $I_{\preceq}$ of a poset $(I, \preceq)$.
\end{definition}

\begin{theorem}\cite[Theorem B]{Woolf26}\label{thm:poset-stratified-yields-stratified} Suppose $\pi\colon X \to I_{\preceq}$ is a poset-stratified space such that the Alexandrov space $I_{\preceq}$ is locally finite and $\pi$ is open. Then the decomposition $\{\pi^{-1}(i)| i\in \mathrm{Im}(\pi)\}$ is a stratification of $X$.
\end{theorem}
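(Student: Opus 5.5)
The plan is to verify the three conditions of Definition~\ref{def: stratification} directly for the fibres $X_i:=\pi^{-1}(i)$, $i\in\mathrm{Im}(\pi)$. These sets are nonempty by construction, pairwise disjoint, and cover $X$. Throughout I use the basic dictionary for the Alexandrov topology on $I_\preceq$. The minimal open neighbourhood of $i$ is the up-set $U_i=\{j\mid i\preceq j\}$. The closure of $\{i\}$ is the down-set $D_i=\{j\mid j\preceq i\}$, since closed sets are exactly the down-sets. Local finiteness of $I_\preceq$ is taken to mean that each $U_i$ is finite.

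\emph{Local finiteness.} Given $x\in X$ with $\pi(x)=i$, take the neighbourhood $W_x:=\pi^{-1}(U_i)$. It is open by continuity of $\pi$, and it meets $X_j$ only if $j\in U_i$. Since $U_i$ is finite, only finitely many strata are met.

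\emph{Local closure.} Antisymmetry of $\preceq$ gives $\{i\}=U_i\cap D_i$. Hence
\begin{equation*}
X_i=\pi^{-1}(U_i)\cap\pi^{-1}(D_i),
\end{equation*}
which is the intersection of an open set and a closed set, again by continuity of $\pi$.

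\emph{Frontier condition.} This is the step where openness of $\pi$ is essential. I will prove
\begin{equation*}
\overline{X_j}=\pi^{-1}(D_j)=\bigcup_{i\preceq j,\ i\in\mathrm{Im}(\pi)}X_i.
\end{equation*}
The inclusion $\subseteq$ holds because $\pi^{-1}(D_j)$ is closed and contains $X_j$. For $\supseteq$, let $\pi(x)=i\preceq j$ and let $W$ be any open neighbourhood of $x$. Since $\pi$ is open, $\pi(W)$ is an open set containing $i$, so it is an up-set and therefore contains $j$. Thus $W\cap X_j\neq\varnothing$, and since $W$ was arbitrary, $x\in\overline{X_j}$. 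With this formula the frontier condition is immediate: if $X_i\cap\overline{X_j}\neq\varnothing$, then $i\preceq j$, and so all of $X_i\subseteq\overline{X_j}$.

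The only genuinely non-formal point is this reverse inclusion. Continuity alone gives only $\overline{X_j}\subseteq\pi^{-1}(D_j)$, and it is openness that forces every point lying over a smaller index to be a limit of points of $X_j$. The remaining verifications are bookkeeping with up-sets and down-sets.
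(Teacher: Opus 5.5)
Your proof is correct. The paper gives no argument of its own for this statement: it imports it from \cite[Theorem~B]{Woolf26} and applies it only to $\pi_{\cF}\colon M^{4n}\to\cF_\preceq$, where $\cF_\preceq$ is finite. Your direct verification therefore replaces a citation rather than following a proof in the text.

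What your route buys is transparency about where each hypothesis enters. Set $D_j=\{i\mid i\preceq j\}$.
\begin{itemize}
\item Continuity gives local closedness and the inclusion $\overline{X_j}\subseteq\pi^{-1}(D_j)$.
\item Openness gives the reverse inclusion, hence the frontier condition.
\item Finiteness of $U_i$ gives the local finiteness condition.
\end{itemize}
Your reading of ``locally finite'' is forced, not merely convenient. For any open $W\ni x$ with $\pi(x)=i$, openness gives $\pi(W)\supseteq U_i$. So the local finiteness condition holds at $x$ exactly when $U_i$ is finite.

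Your argument also produces the explicit closure formula $\overline{X_j}=\pi^{-1}(D_j)$, which the black-box citation does not record. Applied to $\pi_{\cF}=\rho_P\circ\pi$, it gives $\overline{M_G}=\pi^{-1}(G)$ at once, since $G$ is the disjoint union of the relative interiors of its faces. The paper instead re-derives this identity by hand from the canonical model in the proof of Lemma~\ref{lemma:deformation-retract}. It then relies on it again for the supports $\mathrm{supp}(\mathrm{IC}_F)=\pi^{-1}(F)$ in Theorem~\ref{thm:constructible-quoric}.

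The citation, for its part, places the result inside Woolf's general comparison between poset-stratified spaces and stratifications. Your self-contained check does not need that framework.
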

For a quoric manifold $M^{4n}$ and a face $F$ of $P^n$, the isotropy class $(Q^n_x)$ is the same for all $x\in\pi^{-1}(\mathring F)$. Indeed, for $y\in\mathring F$ the fibre $\pi^{-1}(y)$ is a single orbit, so the class is constant on it. By the regular-chart models, $y\mapsto(Q^n_x)$, $x\in\pi^{-1}(y)$, is locally constant on $\mathring F$, and $\mathring F$ is connected. We write $\lambda_M(F)$ for this class. By the local models, $\lambda_M$ is a characteristic functor, and $(P^n,\lambda_M)$ is the \emph{characteristic pair} of $M^{4n}$. Henceforth we write $\lambda=\lambda_M$. Specialising the above to our setup we have the following.

\begin{theorem}\label{thm:quoric-stratified}
Let $M^{4n}$ be a quoric manifold over a simple polytope $P^n$ with characteristic pair $(P^n,\lambda)$. Denote by $\cF_{\preceq}$ the Alexandrov space of the face poset $(\cF(P^n),\preceq)$. Then:
\begin{enumerate}
\item[(1)] the map $\pi_{\cF}:=\rho_P\circ\pi\colon M^{4n}\to\cF_\preceq$, which sends $x$ to $F$ if $\pi(x)\in\mathring F$, is a poset-stratified space;
\item[(2)] the fibres $\{M_F:=\pi_{\cF}^{-1}(F)\mid F\in\mathrm{Im}(\pi_{\cF})\}$ form a finite stratification of $M^{4n}$;
\item[(3)] for every $x\in M^{4n}$, $\lambda(\pi_{\cF}(x))=(Q^n_x)$, and if $\pi_{\cF}(x_1)\subseteq\pi_{\cF}(x_2)$ then $\lambda^b(\pi_{\cF}(x_2))\subseteq\lambda^b(\pi_{\cF}(x_1))$ for the subgroup representatives.
\end{enumerate}
\end{theorem}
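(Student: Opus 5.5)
The plan is to assemble the three parts from results already established, treating $\pi_{\cF}$ as a composite of maps whose properties we know.

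For (1), continuity and openness of $\pi_{\cF}=\rho_P\circ\pi$ are what we need. Proposition~\ref{prop:polytope-to-poset} gives that $\rho_P$ is continuous and open. The orbit map $\pi$ is continuous by definition. It is also open: by Remark~\ref{rem:orbit-space-diff-polytope}, $\pi=\bar\pi\circ p$ with $\bar\pi$ a homeomorphism. The quotient map $p\colon M^{4n}\to M^{4n}/Q^n$ by a group action is always open, since for $W\subseteq M^{4n}$ open, $p^{-1}(p(W))=\bigcup_{q\in Q^n}q\cdot W$ is open. Hence $\pi_{\cF}$ is a continuous open map to the Alexandrov space $\cF_\preceq$, so it is a poset-stratified space in the sense of \cite{Woolf26}. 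The description "$x\mapsto F$ if $\pi(x)\in\mathring F$" follows from the proof of Proposition~\ref{prop:polytope-to-poset}: an open simplex of the barycentric subdivision for a chain $F_0\prec\dots\prec F_k$ lies in $\mathring F_k$ and is sent to $F_k$.

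For (2), $\cF_\preceq$ is finite, hence locally finite. Theorem~\ref{thm:poset-stratified-yields-stratified} then applies directly to $\pi_{\cF}$ and shows that its nonempty fibres form a stratification. Finiteness is immediate because $\cF(P^n)$ is finite. Since $\pi$ is surjective, $\mathrm{Im}(\pi_{\cF})$ is in fact all of $\cF(P^n)$. As an independent check of the frontier condition, note that $\overline{M_G}=\pi^{-1}(G)$, as shown in the proof of Lemma~\ref{lemma:deformation-retract}; this set is a union of the strata $M_K$ with $K\preceq G$.

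For (3), the first claim is the definition of $\lambda=\lambda_M$ given just before the theorem. There $(Q^n_x)$ was shown to be constant over $\pi^{-1}(\mathring F)$, using that fibres are single orbits, that the class is locally constant in regular charts, and that $\mathring F$ is connected. This constant value is $\lambda(F)$ with $F=\pi_{\cF}(x)$. For the second claim, take $F_1=\pi_{\cF}(x_1)\subseteq F_2=\pi_{\cF}(x_2)$. The required inclusion $\lambda^b(F_2)\subseteq\lambda^b(F_1)$ is exactly the compatibility property of the canonical representatives from \cite[Construction~4.2.8]{hopkinson2012quoric}, recalled before Definition~\ref{def:canonical-model}.

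The main obstacle is not in (1) or (2) but in making sure that the representatives $\lambda^b$ attached to $\lambda_M$ are the compatible ones used in the canonical model. This is needed to invoke the diffeomorphism $M^{4n}\cong\cM(\lambda)$ of \cite[Proposition 4.2.10]{hopkinson2012quoric}. If one instead wants actual stabilisers, note that in $\cM(\lambda)$ the stabiliser of $[q,x]$ is $q\lambda^b(F_x)q^{-1}$. So at points $[1,x_i]$ the inclusion holds on the nose, and in general it holds up to conjugation, consistent with the statement being about representatives.
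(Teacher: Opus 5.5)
Your proposal is correct and follows essentially the paper's own proof. Continuity and openness of $\pi_{\cF}$ come from $\pi=\bar\pi\circ p$ and Proposition~\ref{prop:polytope-to-poset}, Woolf's Theorem~B gives the stratification, and (3) follows from the definition of $\lambda_M$ together with the compatible canonical representatives. The differences are cosmetic: you prove openness under (1) rather than (2), and you add a frontier-condition check and a remark about conjugate stabilisers in $\cM(\lambda)$.
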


\begin{proof}
(1) $\pi$ is continuous and $\rho_P$ is continuous by Proposition~\ref{prop:polytope-to-poset}, so $\pi_{\cF}$ is continuous.

(2) $\pi=\bar\pi\circ p$ with $\bar\pi$ a homeomorphism and $p$ the quotient map of a group action, which is open; so $\pi$ is open. Also $\rho_P$ is open by Proposition~\ref{prop:polytope-to-poset}. Hence $\pi_{\cF}$ is open. Since $P^n$ has finitely many faces, $\cF(P^n)$ is finite, hence locally finite. By Theorem~\ref{thm:poset-stratified-yields-stratified}, $\{\pi_{\cF}^{-1}(F)\}_{F}$ is a stratification of $M^{4n}$.

(3) For $x\in M_F$ we have $\pi(x)\in\mathring F$, so $(Q^n_x)=\lambda(F)$ by the definition of $\lambda$. The compatibility of the representatives with reverse face inclusion follows from the canonical construction of $(P^n,\lambda)$.
\end{proof}

\begin{remark}
The Alexandrov topology on $\cF_{\preceq}$ depends only on the face-inclusion relations of $P^n$, whereas $\lambda$ encodes the isotropy data of the $Q^n$-action. The notion of a poset-stratified space does not require $\lambda$; the compatibility in Theorem~\ref{thm:quoric-stratified}(3) is a feature of the quoric structure.
\end{remark}

\begin{lemma}\label{lem:isotropy-of-strata}
Let $\pi\colon M^{4n}\to P^n$ be a smooth locally regular quoric manifold over a simple polytope. For each nonempty face $F\subseteq P^n$ let $M_F=\pi^{-1}(\mathring F)$, and let $\lambda^b(F)\le Q^n$ be a chosen representative of $\lambda(F)$. Then:
\begin{enumerate}
\item[(1)] $M_F$ is a $Q^n$-invariant smooth embedded submanifold of $M^{4n}$;
\item[(2)] every point of $M_F$ has stabiliser class $\lambda(F)$, and if $\mathrm{codim}F=k$, each such stabiliser is isomorphic to $Q^k$;
\item[(3)] there is a noncanonical $Q^n$-equivariant diffeomorphism
\begin{equation}\label{eq:orbit-type-slice}
M_F\cong\mathring F\times Q^n/\lambda^b(F),
\end{equation}
where $Q^n$ acts trivially on $\mathring F$ and by left translation on $Q^n/\lambda^b(F)$.
\end{enumerate}
\end{lemma}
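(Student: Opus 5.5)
The plan is to work entirely in the canonical model, identifying $M^{4n}$ with $\mathcal M(\lambda)=(Q^n\times P^n)/\!\sim_\lambda$ via \cite[Proposition 4.2.10]{hopkinson2012quoric}, as in the proof of Lemma~\ref{lemma:deformation-retract}. Under this identification $\pi([q,x])=x$, so
\begin{equation*}
M_F=p(Q^n\times\mathring F).
\end{equation*}
Over $\mathring F$ the relation $\sim_\lambda$ is simply right translation by the fixed subgroup $\lambda^b(F)$. This suggests the map
\begin{equation*}
\Phi\colon \mathring F\times Q^n/\lambda^b(F)\to M_F,\qquad (x,q\lambda^b(F))\mapsto[q,x].
\end{equation*}
It is well defined and bijective by the definition of $\sim_\lambda$. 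It is $Q^n$-equivariant for left translation on $Q^n/\lambda^b(F)$ and the trivial action on $\mathring F$.

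Continuity of $\Phi$ follows as in Lemma~\ref{lemma:deformation-retract}. The set $Q^n\times\mathring F$ is saturated, and $\Phi$ is induced from the restriction of $p$. For continuity of $\Phi^{-1}$, the coordinate $x$ is recovered as $\pi$. The coset $q\lambda^b(F)$ requires more care; I would handle it by compactness. Since $Q^n/\lambda^b(F)$ is compact Hausdorff, a continuous bijection from $K\times Q^n/\lambda^b(F)$ onto its image is a homeomorphism for each compact $K\subseteq\mathring F$. Exhausting $\mathring F$ by compact sets then gives a homeomorphism. This settles (3) topologically, and also (2): the stabiliser of $[q,x]$ is $q\lambda^b(F)q^{-1}$, whose class is $\lambda(F)$ by Theorem~\ref{thm:quoric-stratified}(3). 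By Remark~\ref{rem:acceptable-isotropy}, applied at any vertex $v\preceq F$ where $\lambda$ is acceptable, this class has rank $k=\mathrm{codim}F$, hence is isomorphic to $Q^k$. Invariance of $M_F$ is clear, since $\pi$ is $Q^n$-invariant.

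The main obstacle is the smooth part: that $M_F$ is an embedded submanifold and that $\Phi$ is a diffeomorphism. The canonical model only carries a topology, so I would argue locally in regular charts instead. Near a point of $M_F$, choose a regular chart $\varphi\colon U\to\mathbb H^n$. In that chart the orbit-type piece for the class $\lambda(F)$ is, after the identification with a regular corner, the set of points with a prescribed set of $k$ vanishing ``coordinates''. For the standard action this set is a linear subspace minus coordinate hyperplanes, $\cong(\mathbb H\setminus0)^{n-k}$, so it is an embedded submanifold of dimension $4(n-k)$. For a general regular corner I would instead appeal to Proposition~\ref{prop:orbit-type-stratification}(2), which says orbit-type strata of smooth proper actions are embedded submanifolds. $M_F$ is an open union of connected components of such a stratum, because $\mathring F$ is open in the corresponding orbit-type locus of $P^n$.

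With smoothness of $M_F$ in hand, I would upgrade $\Phi$ as follows. By Proposition~\ref{prop:orbit-type-stratification}(3), $M_{(H)}\cong M_H\times_K Q^n/H$. Here $M_H\cap M_F\to\mathring F$ is a smooth principal $K$-bundle over a contractible base, hence trivial; choosing a section gives a smooth version of $\Phi$, explaining the word ``noncanonical''. Equivalently, I would choose a smooth section $s\colon\mathring F\to M_F$ with $Q^n_{s(x)}=\lambda^b(F)$ and set $\Phi(x,q\lambda^b(F))=q\cdot s(x)$. This map is a smooth equivariant bijective immersion between manifolds of equal dimension, and therefore a diffeomorphism.
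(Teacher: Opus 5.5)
Your proposal is correct, and its decisive steps match the paper's proof. For (2) you use acceptability of $\lambda$ at a vertex of $F$ together with Remark~\ref{rem:acceptable-isotropy}, and for (3) you trivialise the principal $K$-bundle $E_F=\{m\in M_F: Q^n_m=\lambda^b(F)\}\to\mathring F$ of Proposition~\ref{prop:orbit-type-stratification}(3) over the contractible base. Both you and the paper take for granted that the chart models identify $M_F/Q^n$ smoothly with $\mathring F$.

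The canonical-model homeomorphism is a harmless but redundant detour, because the bundle trivialisation already gives the equivariant diffeomorphism. For (1) you deduce embeddedness from the orbit-type stratification together with openness of $\mathring F$ in its orbit-type locus, whereas the paper reads it off directly in a regular chart.
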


\begin{proof}
(1) $M_F$ is $Q^n$-invariant since $\pi$ is the orbit map. By Definition~\ref{def:loc_reg}, in a regular chart $M_F$ corresponds to the preimage of the relative interior of a face of a regular corner, which is an embedded submanifold.

(2) The first claim is the definition of $\lambda$. For the second, choose a vertex $v\in F$. The restriction of $\lambda$ to the faces containing $v$ is an acceptable isotropy functor, so Remark~\ref{rem:acceptable-isotropy} gives that $\lambda(F)$ has rank $k=\mathrm{codim}F$, that is, each stabiliser is isomorphic to $Q^k$.

(3) Let $G=Q^n$, $H=\lambda^b(F)$, $K=N_G(H)/H$ and $E_F=\{m\in M_F: G_m=H\}$. By Proposition~\ref{prop:orbit-type-stratification}(3), $E_F\to M_F/G$ is a smooth principal $K$-bundle and $M_F\cong E_F\times_K G/H$. The regular-chart models identify $M_F/G$ diffeomorphically with $\mathring F$, which is contractible and paracompact, so $E_F\cong\mathring F\times K$ as principal bundles. Hence $M_F\cong\mathring F\times G/H$, and this diffeomorphism is $G$-equivariant because $G$ acts on the factor $G/H$ by left translation while $K$ acts on the right.
\end{proof}

The preceding results show that the face stratification carries two complementary structures. The finite space $\cF_{\preceq}$ records the incidence relations among the strata, while the characteristic functor $\lambda$ records their isotropy classes and the corresponding compatibility relations. Thus the natural combinatorial structure associated with the stratification is the pair $(\cF_{\preceq},\lambda)$.

\begin{remark}[Relation with the canonical model]
\label{rem:face-strata-canonical-model}
The decomposition \eqref{eq:orbit-type-slice} may be viewed as the restriction of Hopkinson's canonical construction to the relative interior of a face. Indeed, let $(P^n,\lambda)$ be the characteristic pair of $M^{4n}$ and $\mathcal M(\lambda)=(Q^n\times P^n)/{\sim_\lambda}$ the canonical model . Since the isotropy subgroup $\lambda^b(F)$ is constant over $\mathring F$, the restriction of the canonical model to this relative interior admits a natural $Q^n$-equivariant identification $\pi_\lambda^{-1}(\mathring F)\cong\mathring F\times Q^n/\lambda^b(F)$.
\end{remark}

We now compare this stratification with the classical orbit-type stratification of the $Q^n$-action.

\begin{corollary}[Comparison of the face and orbit-type stratifications]
\label{cor:face-orbit-type-comparison} Let $M^{4n}$ be a smooth quoric manifold with a smooth locally regular $Q^n$-action and characteristic pair $(P^n,\lambda)$. Then, for every isotropy conjugacy class $(H)$, holds
\begin{equation*}
M_{(H)}=\bigsqcup_{\substack{F\in\cF(P^n)\\
\lambda(F)=(H)}}M_F
\end{equation*} The sets $M_F$ occurring in this decomposition are precisely the connected components of $M_{(H)}$.
\end{corollary}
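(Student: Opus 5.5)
The argument has two parts: first establish the set-theoretic decomposition of $M_{(H)}$, then identify its pieces as connected components.

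\emph{Decomposition.} The faces partition $P^n$ into relative interiors, so $M^{4n}=\bigsqcup_{F\in\cF(P^n)}M_F$. By Theorem~\ref{thm:quoric-stratified}(3), every $x\in M_F$ has $(Q^n_x)=\lambda(F)$. Hence $x\in M_{(H)}$ if and only if $\lambda(\pi_\cF(x))=(H)$. This gives
\begin{equation*}
M_{(H)}=\bigsqcup_{\lambda(F)=(H)}M_F .
\end{equation*}

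\emph{Connectedness of each piece.} By Lemma~\ref{lem:isotropy-of-strata}(3), $M_F\cong\mathring F\times Q^n/\lambda^b(F)$. Here $\mathring F$ is convex and $Q^n/\lambda^b(F)$ is the image of the connected group $Q^n$, so $M_F$ is connected.

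\emph{Each piece is open and closed in $M_{(H)}$.} This is the main step. It suffices to show that two distinct faces $F\neq G$ with $\lambda(F)=\lambda(G)=(H)$ cannot meet in closure, that is, $\overline{M_G}\cap M_F=\varnothing$. Suppose $\overline{M_G}\cap M_F\neq\varnothing$. By the frontier condition from Theorem~\ref{thm:quoric-stratified}(2), and since $\overline{M_G}=\pi^{-1}(G)$ (shown in the proof of Lemma~\ref{lemma:deformation-retract}), this means $\mathring F\subseteq G$, so $F\prec G$ strictly. Then $\mathrm{codim}F>\mathrm{codim}G$. By Lemma~\ref{lem:isotropy-of-strata}(2), the classes $\lambda(F)$ and $\lambda(G)$ consist of groups isomorphic to $Q^{\mathrm{codim}F}$ and $Q^{\mathrm{codim}G}$ respectively. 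These have different dimensions, so they cannot be conjugate, which contradicts $\lambda(F)=\lambda(G)$. (Alternatively, one can use injectivity of the acceptable isotropy functor at a vertex $v\in F\subseteq G$.)

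It follows that the pieces $M_F$ in the decomposition are pairwise separated: each is closed in $M_{(H)}$, being $\overline{M_F}\cap M_{(H)}=M_F$. Since there are finitely many of them, each is also open in $M_{(H)}$. Being connected, open, closed and nonempty, each $M_F$ is a connected component of $M_{(H)}$. Because they exhaust $M_{(H)}$, they are precisely its connected components. Combined with Proposition~\ref{prop:orbit-type-stratification}(2), this shows the face stratification coincides with the connected-component orbit-type stratification.
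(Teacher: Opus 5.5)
Your proof is correct. The paper states this corollary without a written proof, treating it as immediate from Theorem~\ref{thm:quoric-stratified}(3) and Lemma~\ref{lem:isotropy-of-strata}. Your argument supplies the step the paper leaves implicit: for distinct faces $F\neq G$ with $\lambda(F)=\lambda(G)$, the stratum $M_F$ never meets $\overline{M_G}=\pi^{-1}(G)$, so the pieces are clopen in $M_{(H)}$.

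Both ways you rule out $F\prec G$ work: the rank count ($\mathrm{codim}F>\mathrm{codim}G$ gives non-isomorphic stabilisers), and injectivity of $\lambda$ at a common vertex, which is what the paper's remark after Definition~\ref{def:char_function} points to. The appeal to the frontier condition is not needed, since $G\cap\mathring F\neq\varnothing$ already forces $F\subseteq G$ for faces of a polytope.
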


Hence the face stratification agrees with the connected orbit-type stratification, although the partition indexed only by isotropy conjugacy classes may have fewer, disconnected pieces. We close this section describing the above decompositions to the case of $\HP{2}$.

\begin{example}\label{ex:HP2-stratifications}
Consider $\mathbb{HP}^2$ with the action of $Q^2$ and orbit space homeomorphic to $P^2=\Delta^2$. Denote the vertices by $v_0,v_1,v_2$, the edges by $e_{01},e_{02},e_{12}$ and the $\Delta^2$ being the 2-face. The isotropy subgroups $\lambda(F)$ are:
\begin{equation*}
\lambda(v_i)\cong Q^2 \ (\text{fixed points}), \qquad \lambda(e_{ij})\cong Q \ (\text{rank-one, one per edge}), \qquad \lambda(\Delta^2)=\{e\}.
\end{equation*}
\begin{itemize}
\item(Face partition) Pulling up the stratification of $\Delta^2$ by relative interiors of faces gives the stratification
\begin{equation*}
\mathbb{HP}^2 = M_{\Delta^2} \;\sqcup\; \big(M_{e_{01}} \sqcup M_{e_{02}} \sqcup M_{e_{12}}\big) \;\sqcup\; \big(M_{v_0}\sqcup M_{v_1}\sqcup M_{v_2}\big),
\end{equation*} which consists of seven strata in total, one for each face of $\Delta^2$. In particular, $M_{v_i}=\{[\text{coordinate point}]\}$ is a single fixed point, each $M_{e_{ij}}\cong S^3\times(0,1)$, and $M_{\Delta^2}$ is the open dense stratum of free orbits.
\item(Orbit-type partition) The isotropy subgroups $\lambda(e_{01}),\lambda(e_{02}),\lambda(e_{12})\le Q^2=Q\times Q$ are pairwise non-conjugate and occur as (conjugates of) $1\times Q$, $Q\times 1$, and the diagonal $\Delta Q=\{(q,q)\}$ respectively. By Proposition~\ref{prop:orbit-type-stratification}, grouping the face strata by conjugacy class of isotropy gives the orbit-type partition
\begin{equation*}
\mathbb{HP}^2 = M_{(e)} \;\sqcup\; M_{(1\times Q)} \;\sqcup\; M_{(Q\times 1)} \;\sqcup\; M_{(\Delta Q)} \;\sqcup\; M_{(Q^2)},
\end{equation*}
which consists of five subsets, with
\begin{equation*}
    M_{(e)} = M_{\Delta^2}, \qquad M_{(\lambda(e_{ij}))} = M_{e_{ij}},
\end{equation*}
\begin{equation*}
    M_{(Q^2)} = M_{v_0}\sqcup M_{v_1}\sqcup M_{v_2} = \{[1:0:0],[0:1:0],[0:0:1]\}.
\end{equation*}
\end{itemize}
\end{example}

\subsection{The Borel space of a quoric manifold}
\label{sec:borel-construction} Here we define the Borel construction of a quoric manifold $M^{4n}$. It is a stratified space, with the stratification induced from the face stratification of $M^{4n}$, and each of its strata is connected. We also prove a Borel analogue of the equivariant deformation retraction of Lemma~\ref{lemma:deformation-retract}.
First, we fix some notation. Let
\begin{equation*}
    E_NQ^n:=(S^{4N+3})^n\subset E_{N+1}Q^n,
\end{equation*}
so that $EQ^n=\bigcup_NE_NQ^n$ carries the colimit topology, and put
\begin{equation*}
M_Q^N:=E_NQ^n\times_{Q^n}M^{4n},
\end{equation*}
which is a compact manifold. The \emph{Borel construction} (or \emph{homotopy quotient}) of $M^{4n}$ is
\begin{equation*}
M_Q:=EQ^n\times_{Q^n}M^{4n}=\bigcup_NM_Q^N,
\end{equation*}
where the inclusions $M_Q^N\subset M_Q^{N+1}$ are closed embeddings of compact smooth manifolds, and $M_Q$ carries the colimit topology. Since $M^{4n}$ is compact Hausdorff, the product $EQ^n\times M^{4n}$ carries the colimit topology of the spaces $E_NQ^n\times M^{4n}$. Hence the colimit topology on $M_Q$ coincides with the quotient topology induced by $q\colon EQ^n\times M^{4n}\to M_Q$. In particular, $q$ is an open surjection. The space $M_Q$ is connected, being the image of the connected space $EQ^n\times M^{4n}$ and paracompact \cite[12.2.1]{Bernstein1994}

The projection to the Alexandrov space of the face poset is
\begin{equation*}
b_{Q}\colon M_Q\to\cF_{\preceq},\qquad [e,m]\mapsto \pi_{\cF}(m),
\end{equation*}
which is continuous because $\pi_{\cF}$ is. For a face $F$ put
\begin{equation*}
V_F:=\pi_{\cF}^{-1}(U_F)=\pi^{-1}\Big(\bigcup_{G\supseteq F}\mathring G\Big)=\bigsqcup_{G\supseteq F}M_G,\qquad V_{F,Q}:=b_Q^{-1}(U_F)=EQ^n\times_{Q^n}V_F.
\end{equation*}
Here $U_F\subseteq\cF_\preceq$ is the set of faces containing $F$, and $\bigcup_{G\supseteq F}\mathring G\subseteq P^n$ is the corresponding subset of $P^n$ (its image under $\rho_P^{-1}$); we denote both by the same symbol. Both $V_F\subseteq M^{4n}$ and $V_{F,Q}\subseteq M_Q$ are open, since $U_F$ is open in $\cF_\preceq$.

Accordingly, $V_{F,Q}$ is the disjoint union of the locally closed pieces $EQ^n\times_{Q^n}M_G$, $G\supseteq F$. By \eqref{eq:orbit-type-slice} applied to each stratum,
\begin{equation*}
EQ^n\times_{Q^n}M_G\;\cong\;\mathring G\times\big(EQ^n/\lambda^b(G)\big),
\end{equation*}
and $EQ^n/\lambda^b(G)$ is a model for $B\lambda^b(G)$, because $\lambda^b(G)$ acts freely on $EQ^n$ and $Q^n$ acts trivially on $\mathring G$. We emphasise that this is a decomposition into pieces, not a product decomposition of $V_{F,Q}$.
\begin{proposition}\label{prop:loc-on-orbit-type-strata}
For each face $F$ of $P^n$, the space $EQ^n\times_{Q^n}M_F$ is connected and homotopy equivalent to $B\lambda^b(F)$. In particular $\pi_1\big(EQ^n\times_{Q^n}M_F\big)=0$.
\end{proposition}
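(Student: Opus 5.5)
The plan is to start from the equivariant product decomposition of Lemma~\ref{lem:isotropy-of-strata}(3), $M_F\cong\mathring F\times Q^n/\lambda^b(F)$, with $Q^n$ acting trivially on $\mathring F$. Applying the Borel construction gives
\begin{equation*}
EQ^n\times_{Q^n}M_F\;\cong\;\mathring F\times\bigl(EQ^n\times_{Q^n}Q^n/\lambda^b(F)\bigr)\;\cong\;\mathring F\times EQ^n/\lambda^b(F).
\end{equation*}
The last identification is the standard homeomorphism $EQ^n\times_{Q^n}Q^n/H\to EQ^n/H$, $[e,qH]\mapsto eqH$, where $EQ^n$ carries the right action. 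Its inverse is $eH\mapsto[e,H]$. Both maps are continuous on each compact stage $E_NQ^n$, hence continuous on the colimit.

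Next I will use that $\mathring F$ is convex, hence contractible. This gives $EQ^n\times_{Q^n}M_F\simeq EQ^n/\lambda^b(F)$. The group $\lambda^b(F)$ is a closed subgroup of $Q^n$, so it acts freely on $EQ^n=\bigcup_N(S^{4N+3})^n$. This space is contractible, being a colimit of $(4N+2)$-connected CW complexes along cellular inclusions. Moreover, $EQ^n\to EQ^n/\lambda^b(F)$ is a principal $\lambda^b(F)$-bundle, since each $E_NQ^n\to E_NQ^n/\lambda^b(F)$ is a principal bundle for a free action of a compact Lie group on a manifold. Hence $EQ^n/\lambda^b(F)$ is a model for $B\lambda^b(F)$, and by Lemma~\ref{lem:isotropy-of-strata}(2) it is $\simeq BQ^k$ with $k=\mathrm{codim}F$.

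For connectedness and $\pi_1=0$, I will use the long exact homotopy sequence of the fibration $\lambda^b(F)\to EQ^n\to B\lambda^b(F)$. Since $EQ^n$ is weakly contractible, $\pi_i(B\lambda^b(F))\cong\pi_{i-1}(\lambda^b(F))$. Now $\lambda^b(F)\cong Q^k\cong(S^3)^k$ is connected and simply connected, so $\pi_0$ and $\pi_1$ of the classifying space vanish. In fact it is $3$-connected. Alternatively, connectedness follows directly, because $EQ^n\times_{Q^n}M_F$ is a continuous image of the connected space $EQ^n\times M_F$; here $M_F$ is connected as a product of $\mathring F$ and a quotient of the connected group $Q^n$.

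The main obstacle is the point-set bookkeeping in the colimit topology. One needs the product decomposition of the Borel space to be a homeomorphism (or at least a homotopy equivalence) and not merely a bijection, and one needs $EQ^n/\lambda^b(F)\to B\lambda^b(F)$ to be a genuine fibration. I will handle both at finite stages $N$, where everything is a compact manifold. The conclusions then pass to the union, because homotopy groups of the colimit are colimits of the homotopy groups of the stages, and these stabilise with connectivity growing with $N$.
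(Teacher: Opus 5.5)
Your proposal is correct and follows the paper's proof essentially step for step. The shared steps are the equivariant splitting of Lemma~\ref{lem:isotropy-of-strata}(3), the identification $EQ^n\times_{Q^n}Q^n/H\cong EQ^n/H$, the contractibility of $\mathring F$, and $\pi_1(BH)\cong\pi_0(H)=1$ from the long exact sequence of $H\to EQ^n\to BH$, which is exactly the content of Lemma~\ref{lem:pi1-vanishing}; your finite-stage point-set bookkeeping and the remark that $BH$ is $3$-connected are harmless refinements beyond what the paper writes.
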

\begin{proof}
Put $H=\lambda^b(F)\cong Q^k$, $k=\mathrm{codim}F$. By \eqref{eq:orbit-type-slice}, and since $Q^n$ acts trivially on $\mathring F$ and $EQ^n\times_{Q^n}Q^n/H\cong EQ^n/H$,
\begin{equation*}
EQ^n\times_{Q^n}M_F\;\cong\;\mathring F\times EQ^n/H\;\simeq\;BH,
\end{equation*}
the last step because $\mathring F$ is contractible. The space $BH$ is connected, and $\pi_1(BH)\cong\pi_0(H)=1$ by Lemma~\ref{lem:pi1-vanishing}, as $H$ is connected.
\end{proof}

\begin{proposition}[Borel deformation retraction]
\label{prop:borel-deformation-retract}
Let $F$ be a face of $P^n$, $f\in\mathring F$ and $O_f=\pi^{-1}(f)$. The homotopy $H\colon[0,1]\times V_F\to V_F$ of Lemma~\ref{lemma:deformation-retract} induces a continuous homotopy
\begin{equation*}
H_Q\colon[0,1]\times V_{F,Q}\to V_{F,Q},\qquad H_Q\big(t,[e,y]\big):=\big[e,H(t,y)\big],
\end{equation*}
such that
\begin{enumerate}
\item[(1)] $H_Q(1,\cdot)=\mathrm{id}$ and $H_Q(0,V_{F,Q})=EQ^n\times_{Q^n}O_f\cong EQ^n/\lambda^b(F)$;
\item[(2)] $H_Q(t,\cdot)$ fixes $EQ^n\times_{Q^n}O_f$ pointwise for all $t$;
\item[(3)] for every face $G\supseteq F$ and every $t$, $H_Q(t,\cdot)$ preserves the closure of $EQ^n\times_{Q^n}M_G$ in $V_{F,Q}$.
\end{enumerate}

In particular $V_{F,Q}$ deformation retracts onto $EQ^n\times_{Q^n}O_f$.
\end{proposition}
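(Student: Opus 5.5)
The plan is to define $H_Q$ on the level of $EQ^n\times V_F$ and push it down to the quotient, using the $Q^n$-equivariance of the homotopy $H$ from Lemma~\ref{lemma:deformation-retract}.

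\textbf{Well-definedness.} Define
\begin{equation*}
\widetilde H_Q\colon[0,1]\times EQ^n\times V_F\to EQ^n\times V_F,\qquad \widetilde H_Q(t,e,y)=(e,H(t,y)).
\end{equation*}
Recall that $[e,y]=[es^{-1},sy]$. Equivariance gives
\begin{equation*}
\widetilde H_Q(t,es^{-1},sy)=(es^{-1},sH(t,y)),
\end{equation*}
which is the same class as $(e,H(t,y))$. Hence the composite with the quotient map $q$ is constant on the fibres of $\mathrm{id}_{[0,1]}\times q$.

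\textbf{Continuity.} This is the step I expect to need the most care, because of the colimit topology.
\begin{itemize}
\item The restriction of $q$ to the open saturated set $EQ^n\times V_F$ is an open surjection onto $V_{F,Q}$, hence a quotient map.
\item Product with $[0,1]$ preserves quotient maps, since $[0,1]$ is locally compact Hausdorff. So $\mathrm{id}_{[0,1]}\times q$ is a quotient map.
\item $\widetilde H_Q$ is continuous on $[0,1]\times EQ^n\times V_F$. To see this, note that this product carries the colimit topology of the pieces $[0,1]\times E_NQ^n\times V_F$, because $[0,1]\times V_F$ is locally compact. On each piece the map is visibly continuous.
\end{itemize}
Therefore $H_Q$ descends to a continuous map on $[0,1]\times V_{F,Q}$.

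\textbf{Property (1).} We have $H_Q(1,[e,y])=[e,y]$. Also $H_Q(0,[e,y])=[e,H(0,y)]$ with $H(0,y)\in O_f$, and every point of $EQ^n\times_{Q^n}O_f$ arises this way, since $H(0,\cdot)$ restricts to the identity on $O_f$. Thus $H_Q(0,V_{F,Q})=EQ^n\times_{Q^n}O_f$.

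For the identification with $EQ^n/\lambda^b(F)$:
\begin{itemize}
\item Choose $o\in O_f$ with stabiliser $\lambda^b(F)$. Then $O_f\cong Q^n/\lambda^b(F)$ equivariantly.
\item Use $EQ^n\times_{Q^n}Q^n/H\cong EQ^n/H$ via $[e,sH]\mapsto es H$.
\end{itemize}

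\textbf{Property (2).} This is immediate from $H(t,\cdot)|_{O_f}=\mathrm{id}$.

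\textbf{Property (3).} First identify the closure of $EQ^n\times_{Q^n}M_G$ in $V_{F,Q}$ with $EQ^n\times_{Q^n}(\overline{M_G}\cap V_F)$.
\begin{itemize}
\item The set $q^{-1}\bigl(EQ^n\times_{Q^n}(\overline{M_G}\cap V_F)\bigr)=EQ^n\times(\overline{M_G}\cap V_F)$ is closed in $EQ^n\times V_F$ and saturated. Since $q$ is a quotient map, its image is closed and contains $EQ^n\times_{Q^n}M_G$, which gives one inclusion.
\item Conversely, $q$ is continuous and $EQ^n\times(\overline{M_G}\cap V_F)$ is the closure of $EQ^n\times M_G$ in $EQ^n\times V_F$. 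So the image lies in the closure of $EQ^n\times_{Q^n}M_G$, which gives the other inclusion.
\end{itemize}
Preservation of this set then follows from the corresponding property of $H_t$ in Lemma~\ref{lemma:deformation-retract}.

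\textbf{Conclusion.} Properties (1) and (2) together say that $H_Q$ is a deformation retraction of $V_{F,Q}$ onto $EQ^n\times_{Q^n}O_f$, which is the final claim.
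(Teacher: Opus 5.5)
Your proof is correct and follows the paper's argument essentially step by step: you use the equivariance of $H$ to descend $(t,e,y)\mapsto[e,H(t,y)]$ through $\mathrm{id}_{[0,1]}\times q|_{EQ^n\times V_F}$, and you read off (1)--(3) from Lemma~\ref{lemma:deformation-retract}. The differences are minor. You justify the quotient property via local compactness of $[0,1]$ rather than via openness of products of open surjections. In (3) you check the closure identity using saturation together with $\overline{A\times B}=\overline A\times\overline B$, which makes explicit a step the paper compresses into ``$q_F$ is open and $EQ^n\times M_G$ is saturated''.
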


\begin{proof}
\emph{Well-definedness.} Suppose $[e,y]=[e',y']$, so $e'=eg^{-1}$ and $y'=g\cdot y$ for some $g\in Q^n$. Since $H(t,\cdot)$ is equivariant,
\begin{equation*}
[e',H(t,y')]=[eg^{-1},g\cdot H(t,y)]=[e,H(t,y)].
\end{equation*}

\emph{Continuity.} Let $q_F:=q|_{EQ^n\times V_F}\colon EQ^n\times V_F\to V_{F,Q}$. It is an open surjection, being the restriction of the open map $q$ to the saturated open set $EQ^n\times V_F$. Hence $\mathrm{id}_{[0,1]}\times q_F$ is an open surjection, thus a quotient map. The map $(t,e,y)\mapsto q_F(e,H(t,y))$ is continuous and constant on its fibres, so it descends to a continuous $H_Q$.

\emph{(1) and (2).} $H(1,\cdot)=\mathrm{id}$ gives $H_Q(1,\cdot)=\mathrm{id}$. Since $H(0,V_F)=O_f$, we get $H_Q(0,V_{F,Q})=EQ^n\times_{Q^n}O_f$, and $O_f\cong Q^n/\lambda^b(F)$ gives $EQ^n\times_{Q^n}O_f\cong EQ^n/\lambda^b(F)$. As $H(t,\cdot)|_{O_f}=\mathrm{id}$, the same holds for $H_Q$.

\emph{(3).} Let $G\supseteq F$. By the proof of Lemma~\ref{lemma:deformation-retract}, $\overline{M_G}\cap V_F=\pi^{-1}(G)\cap V_F=:A_G$ is $Q^n$-invariant and $H_t(A_G)\subseteq A_G$, so $H_{Q,t}$ preserves $EQ^n\times_{Q^n}A_G$. The latter is the closure of $EQ^n\times_{Q^n}M_G$ in $V_{F,Q}$, because $q_F$ is open and $EQ^n\times M_G$ is saturated, so $q_F(\overline S)=\overline{q_F(S)}$ for saturated $S$.
\end{proof}

\begin{corollary} 
$V_{F,Q}\simeq B\lambda^b(F)$ and hence $V_{F,Q}$ is connected and $\pi_1(V_{F,Q})=0$.
\end{corollary}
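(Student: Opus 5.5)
The plan is to combine Proposition~\ref{prop:borel-deformation-retract} with Proposition~\ref{prop:loc-on-orbit-type-strata}. By Proposition~\ref{prop:borel-deformation-retract}, $V_{F,Q}$ deformation retracts onto $EQ^n\times_{Q^n}O_f$. In particular, the inclusion $EQ^n\times_{Q^n}O_f\hookrightarrow V_{F,Q}$ is a homotopy equivalence, with homotopy inverse $H_Q(0,\cdot)$. Item~(1) of that proposition identifies $EQ^n\times_{Q^n}O_f$ with $EQ^n/\lambda^b(F)$.

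Next I will check that $EQ^n/\lambda^b(F)$ is a model for $B\lambda^b(F)$. Put $H=\lambda^b(F)\cong Q^k$, a closed subgroup of $Q^n$. It acts freely on $EQ^n=(S^\infty)^n$, which is contractible. The quotient map $EQ^n\to EQ^n/H$ is a principal $H$-bundle, being the restriction of the principal $Q^n$-bundle structure to a closed subgroup; this local triviality is the only point needing care. Hence $EQ^n/H\simeq BH$, and composing gives $V_{F,Q}\simeq B\lambda^b(F)$. This is the same identification already used in the proof of Proposition~\ref{prop:loc-on-orbit-type-strata}.

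For connectedness and $\pi_1$, both properties are homotopy invariants, so it suffices to treat $BH$. The long exact homotopy sequence of $H\to EQ^n\to BH$, with $EQ^n$ contractible, gives $\pi_1(BH)\cong\pi_0(H)$ and $\pi_0(BH)=0$. Since $H\cong \mathrm{SU}(2)^k$ is connected, $\pi_0(H)$ is trivial. Alternatively, one can cite Lemma~\ref{lem:pi1-vanishing} as in Proposition~\ref{prop:loc-on-orbit-type-strata}.

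There is no substantial obstacle. The only mildly delicate point is that all spaces carry colimit topologies, so the free $H$-action on $EQ^n$ should be checked to give a genuine principal bundle. This follows from working on each compact $E_NQ^n$ and passing to the colimit.
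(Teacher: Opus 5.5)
Your proposal is correct and follows essentially the same route as the paper. Like the paper, you use the deformation retraction of Proposition~\ref{prop:borel-deformation-retract} onto $EQ^n\times_{Q^n}O_f\cong EQ^n/\lambda^b(F)\simeq B\lambda^b(F)$, and then get connectedness and $\pi_1=0$ from $\pi_1(BH)\cong\pi_0(H)=1$, as in Lemma~\ref{lem:pi1-vanishing}. Your remark about checking the principal-bundle structure under the colimit topology is a detail the paper leaves implicit.
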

\begin{proof}
$H_Q$ is a  deformation retraction of $V_{F,Q}$ onto $EQ^n\times_{Q^n}O_f\cong EQ^n/\lambda^b(F)\simeq B\lambda^b(F)$. The latter is connected and simply connected by the proof of Proposition~\ref{prop:loc-on-orbit-type-strata}
\end{proof}

\section{Derived categories of sheaves}
\label{sect:sheaves-quoric}

We study quoric manifolds using sheaves that retain information about the $Q^n$-action and the face stratification of Theorem~\ref{thm:quoric-stratified}. We first recall the necessary background on sheaves, equivariant sheaves, and local systems, and then specialise to quoric manifolds, describing the corresponding categories in several cases. For further background, see \cite{kashiwara2013sheaves}, \cite{dimca2004sheaves}, and \cite{Achar2021PerverseSheaves}.

\subsection{On sheaves}\label{sec:on-sheaves}

We recall the basic notions of sheaf theory needed in the sequel, including stalks, supports, sheaf cohomology, and acyclic resolutions. We also introduce sheaves of DG-algebras and DG-modules, which will be used in our description of equivariant derived categories.

\begin{definition}\label{def:sheaf}
Let $X$ be a topological space.  A \emph{sheaf of $\C$-modules} on $X$ is an assignment $F$ that associates
\begin{itemize}
\item[(1)] to each open set $U\subset X$ a $\C$-module $F(U)$;
\item[(2)] to each pair of open sets $U\subset V$ a homomorphism of $\C$-modules denoted by $\mathrm{res}^{F}_{V,U}\colon F(V) \to F(U)$, called the \emph{restriction map},
\end{itemize} 
such that $\mathrm{res}^{F}_{U,U}=\mathrm{id}_{F(U)}$ and $\mathrm{res}^{F}_{W,U}=\mathrm{res}^{F}_{V,U}\circ \mathrm{res}^{F}_{W,V}$ for all $U\subset V \subset W$ and moreover  if for every open $U\subset X$ and every open cover $U=\bigcup_{i\in I}U_i$ then the following hold:
\begin{description}
\item[(Locality)] if $s\in F(U)$ satisfies $s|_{U_i}=0$ for every $i\in I$, then $s=0$;
\item[(Gluing)] given $s_i\in F(U_i)$ for $i\in I$ with $s_i|_{U_i\cap U_j}=s_j|_{U_i\cap U_j}$ for all $i,j\in I$, there is $s\in F(U)$ with $s|_{U_i}=s_i$ for every $i\in I$.
\end{description}

\end{definition}

 An element $s\in F(U)$ is a \emph{section} of $F$ over $U$.

\begin{notation}
 For a topological space $X$, we write $\mathrm{Sh}(X)$ for the category of sheaves on $X$.
\end{notation}

\begin{definition}\label{def:stalk}
The \emph{stalk} of a sheaf $F$ at $x$ is the $\C$-module $        F_x:=\mathop{\varinjlim}\limits_{\substack{x\in U}} F(U)$. An element of $F_x$ is a \emph{germ} of $F$ at $x$.
\end{definition}

\begin{remark}
For a finite space $X$, the direct limit defining $F_x$ is attained at the minimal open neighbourhood $U_x$ of $x$, so $F_x\cong F(U_x)$.
\end{remark}

\begin{definition}\label{def:support}
Let $F$ be a sheaf on $X$. The \emph{support} of a section $s\in F(U)$ is
\begin{equation*}
\mathrm{supp}(s)=\overline{\{x\in U:s_x\neq 0\}}.
\end{equation*}
The \emph{support} of $F$ is
\begin{equation*}
\mathrm{supp}(F)=\overline{\{x\in X:F_x\neq 0\}}.
\end{equation*}
\end{definition}

The fundamental example of a sheaf for this paper is the constant sheaf which we introduce below.

\begin{example}[Constant sheaf]\label{ex:constant-sheaf}
The \emph{constant sheaf} $\underline{A}_X$ associated to a $\C$-module $A$ is defined by
\begin{equation*}
\underline{A}_X(U):=\{\text{locally constant functions } U\to A\},
\end{equation*}
where $A$ carries the discrete topology, with restriction of functions as restriction maps. Its stalk at every $x\in X$ is naturally isomorphic to $A$. If $U$ is connected and nonempty, then $\underline{A}_X(U)\cong A$. More generally, when the connected components of $U$ are open, one has $\underline{A}_X(U)\cong A^{\pi_0(U)}$.
\end{example}

The sheaf-cohomology computations later in the paper rest on a small further package of standard facts, which we record next: softness, resolutions, and the abstract de Rham theorem.

\begin{definition}\label{def:soft-sheaf}
Let $F$ be a sheaf of $\C$-modules on $X$. For a closed subset $Z\subset X$ write $F(Z):=\varinjlim_{U\supset Z}F(U)$, the direct limit over open neighbourhoods of $Z$. The sheaf $F$ is \emph{soft} if the restriction map $F(X)\to F(Z)$ is surjective for every closed $Z\subset X$, i.e.\ every section of $F$ over a closed subset extends to a global section.
\end{definition}

\begin{definition}\label{def:sheaf-cohomology}
The \emph{sheaf cohomology} of a sheaf $F$ on $X$ is $H^i(X,F):=R^i\Gamma(X,-)(F)$, the $i$-th right derived functor of the global sections functor $\Gamma(X,-)\colon\mathrm{Sh}(X)\to\C\text{-mod}$, computed via an injective resolution of $F$ in the abelian category $\mathrm{Sh}(X)$. We say $F$ is \emph{acyclic} if $H^i(X,F)=0$ for all $i>0$.
\end{definition}

\begin{definition}\label{def:resolution}
A \emph{resolution} of a sheaf $F$ on $X$ is a complex of sheaves $G^\bullet=(G^0\xrightarrow{d^0}G^1\xrightarrow{d^1}\cdots)$ together with a morphism $\varepsilon\colon F\to G^0$ such that
\begin{equation*}
0\to F\xrightarrow{\varepsilon}G^0\xrightarrow{d^0}G^1\xrightarrow{d^1}G^2\to\cdots
\end{equation*}
is exact on stalks at every point of $X$. The resolution is \emph{acyclic} if every $G^j$ is acyclic.
\end{definition}

\begin{proposition}\label{prop:soft-acyclic}
Let $X$ be paracompact.
\begin{enumerate}
\item[(1)] Every soft sheaf on $X$ is acyclic \cite[II.9.11]{bredon_sheaf_1997}.
\item[(2)] If $\mathcal O$ is a soft sheaf of $\C$-algebras on $X$ and $M$ is a sheaf of $\mathcal O$-modules, then $M$ is soft \cite[II.9.16, II.9.18]{bredon_sheaf_1997}; in particular $M$ is acyclic by (1).
\end{enumerate}
\end{proposition}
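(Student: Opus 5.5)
Both parts follow the standard sheaf-theoretic pattern, so the plan is mostly to set up the right reductions and cite the appropriate results from \cite{bredon_sheaf_1997}.

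For part (1), I would first reduce to the case where $\mathcal O$ is a soft sheaf of unital $\C$-algebras on a paracompact space. The key step is the characterisation of softness on a paracompact space: a sheaf $F$ is soft if and only if every section over a closed set extends globally. Equivalently, every section $s\in F(U)$ over an open $U$ and every closed $Z\subset U$ admit, after shrinking, a global section agreeing with $s$ near $Z$.

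Given a sheaf $M$ of $\mathcal O$-modules, a closed set $Z$ and a section $s\in M(Z)$, I proceed in three moves.
\begin{enumerate}
\item Represent $s$ by a section $s_U\in M(U)$ on an open neighbourhood $U\supset Z$.
\item By paracompactness (normality), choose a closed neighbourhood $Z'$ with $Z\subset\mathring{Z'}\subset Z'\subset U$.
\item Use softness of $\mathcal O$ to extend the section of $\mathcal O$ that equals $1$ on $Z$ and $0$ on $\overline{X\setminus Z'}$, viewed as a section over the closed set $Z\sqcup\overline{X\setminus Z'}$ (disjoint after shrinking), to a global section $\phi\in\mathcal O(X)$.
\end{enumerate}
Then $\phi\cdot s_U$ on $U$ and $0$ on $X\setminus Z'$ agree on the overlap $U\setminus Z'$, since $\phi$ vanishes there. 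They therefore glue to a global section of $M$ restricting to $s$ on $Z$. This proves $M$ is soft, and the acyclicity then follows from part (2).

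For part (2), I would show that soft sheaves on a paracompact space are $\Gamma$-acyclic via the standard argument. First, for a short exact sequence $0\to A\to B\to C\to0$ with $A$ soft, the map $\Gamma(B)\to\Gamma(C)$ is surjective. This is proved by Zorn's lemma on partial lifts over closed sets, built from a locally finite closed cover. Next, if $A$ and $B$ are soft then $C$ is soft. Finally, every sheaf embeds into its flabby (Godement) sheaf, which is soft. Dimension shifting along $0\to A\to\mathcal{C}^0(A)\to A'\to0$ then gives $H^i(X,A)=0$ for all $i>0$.

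The main obstacle is the Zorn's lemma lifting step, which is where paracompactness is genuinely used. I would simply cite Bredon II.9.9--9.11 for it rather than reprove it.
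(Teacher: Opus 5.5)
Your argument is correct and is essentially what the paper relies on. The paper gives no proof beyond citing Bredon II.9.11 and II.9.16, and you unpack exactly those arguments: for modules, the cut-off section $\phi\in\mathcal O(X)$ equal to $1$ near $Z$ and $0$ near $X\setminus\mathring{Z'}$; for acyclicity, the lifting lemma II.9.9 together with Godement dimension shifting.

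Two cosmetic points. Your labels ``part (1)'' and ``part (2)'' are swapped relative to the statement; this causes no circularity, since your acyclicity argument does not use the module result. Also, unitality of $\mathcal O$ is a hypothesis you must assume rather than something you can ``reduce to''.
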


\begin{theorem}[Abstract de Rham theorem, {\cite[II.9]{bredon_sheaf_1997}}]\label{thm:abstract-de-rham}
If $0\to F\to G^\bullet$ is an acyclic resolution of $F$, then $H^i(X,F)\cong H^i\bigl(\Gamma(X,G^\bullet)\bigr)$ for all $i\geq0$, where the right-hand side is the cohomology of the complex of global sections.
\end{theorem}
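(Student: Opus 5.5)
The statement to prove is the abstract de Rham theorem: if $0\to F\to G^\bullet$ is an acyclic resolution, then $H^i(X,F)\cong H^i(\Gamma(X,G^\bullet))$. I will argue by dimension shifting, splitting the resolution into short exact sequences. Set $Z^0:=F$ and, for $j\ge 1$, $Z^j:=\ker(d^j\colon G^j\to G^{j+1})=\operatorname{im}(d^{j-1})$. Since exactness of sheaf sequences is checked on stalks (Definition~\ref{def:resolution}), we get short exact sequences of sheaves
\begin{equation*}
0\to Z^j\to G^j\to Z^{j+1}\to 0\qquad (j\ge 0).
\end{equation*}
Applying the derived functors of $\Gamma(X,-)$ (Definition~\ref{def:sheaf-cohomology}) to each sequence gives a long exact cohomology sequence. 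Because $H^i(X,G^j)=0$ for $i>0$, this yields two facts. First, $H^{i+1}(X,Z^j)\cong H^i(X,Z^{j+1})$ for $i\ge1$. Second, $H^1(X,Z^j)\cong \Gamma(X,Z^{j+1})/\operatorname{im}\Gamma(X,G^j)$, together with $\Gamma(X,Z^j)=\ker(\Gamma(X,G^j)\to\Gamma(X,Z^{j+1}))$.

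Now compute the cohomology of the complex $\Gamma(X,G^\bullet)$. Left exactness of $\Gamma$ gives $\Gamma(X,Z^{j+1})=\ker(\Gamma(X,G^{j+1})\to\Gamma(X,G^{j+2}))$, since $Z^{j+1}$ is the kernel of $d^{j+1}$ and $\Gamma$ preserves kernels. It also gives $\Gamma(X,F)=\ker(\Gamma(X,G^0)\to\Gamma(X,G^1))$, because $\Gamma(X,Z^1)\hookrightarrow\Gamma(X,G^1)$. This settles degree $0$: $H^0(\Gamma(X,G^\bullet))=\Gamma(X,F)=H^0(X,F)$.

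For $i\ge1$, the degree-$i$ cocycles of $\Gamma(X,G^\bullet)$ form $\Gamma(X,Z^i)$, and the coboundaries are the image of $\Gamma(X,G^{i-1})$. Hence $H^i(\Gamma(X,G^\bullet))\cong H^1(X,Z^{i-1})$ by the second fact above. Iterating the first fact,
\begin{equation*}
H^1(X,Z^{i-1})\cong H^2(X,Z^{i-2})\cong\cdots\cong H^i(X,Z^0)=H^i(X,F).
\end{equation*}
This proves $H^i(X,F)\cong H^i(\Gamma(X,G^\bullet))$ for all $i\ge0$.

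The one step needing care is identifying the coboundaries. The image of $\Gamma(X,G^{i-1})\to\Gamma(X,G^i)$ must equal the image of $\Gamma(X,G^{i-1})\to\Gamma(X,Z^i)$, viewed inside $\Gamma(X,G^i)$. This holds because $d^{i-1}$ factors through the monomorphism $Z^i\hookrightarrow G^i$, and $\Gamma$ preserves monomorphisms. Finally, I would check that the resulting isomorphism is natural, or independent of choices, by comparing with an injective resolution through a map of resolutions. This last point is optional for the statement as given, which only asserts an isomorphism.
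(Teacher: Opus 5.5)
Your dimension-shifting argument is correct and complete. The splitting into $0\to Z^j\to G^j\to Z^{j+1}\to 0$, the two consequences of $H^{>0}(X,G^j)=0$, and the identification of cocycles with $\Gamma(X,Z^i)$ and of coboundaries via the monomorphism $\Gamma(X,Z^i)\hookrightarrow\Gamma(X,G^i)$ all check out, including the edge cases $i=0$ and $i=1$.

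The paper gives no proof of its own here and simply cites Bredon. Your write-up is a self-contained proof of that standard fact.
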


\begin{example}[Resolution of the constant sheaf]\label{ex:resolution-constant-sheaf}
Let $X$ be a smooth manifold and $\Omega^\bullet_X$ the sheaf of smooth $\C$-valued differential forms, with $d$ the exterior derivative. The Poincar\'e lemma says the sequence
\begin{equation*}
0\to\C_X\to\Omega^0_X\xrightarrow{d}\Omega^1_X\xrightarrow{d}\Omega^2_X\to\cdots
\end{equation*}
is exact on stalks, i.e.\ $\Omega^\bullet_X$ is a resolution of the constant sheaf $\C_X$ of Example~\ref{ex:constant-sheaf}. Each $\Omega^k_X$ is a module over the sheaf of smooth functions $\Omega^0_X$, which is soft (by partitions of unity); by Proposition~\ref{prop:soft-acyclic} the resolution is acyclic, and Theorem~\ref{thm:abstract-de-rham} recovers the classical de Rham theorem $H^i(X,\C)\cong H^i_{dR}(X;\C)$. An analogous construction for the Borel space $M_Q$ will be considered in Lemma~\ref{lem:de-rham-properties}, using the differential-form model specified there.
\end{example}

In Section~\ref{sec:main-section}, we will work with an algebraic refinement of a sheaf valued in differential graded algebras. We conclude this section by recalling the relevant construction from \cite[Section 1]{Lunts1995}.

\begin{definition}\label{def:dga-sheaf}
Let $X$ be a topological space. A \emph{sheaf of differential graded algebras} (sheaf of DG-algebras) on $X$ is defined exactly as in Definition~\ref{def:sheaf}, wwith $\C$-modules replaced throughout by $\mathbb Z$-graded $\C$-algebras: it is a sheaf $\cA=\bigoplus_{n\in\mathbb Z}\cA^n$ of $\mathbb Z$-graded $\C$ algebras on $X$, equipped with a differential $d\colon\cA^n\to\cA^{n+1}$ which is a morphism of sheaves of $\C$-vector spaces, such that $d^2=0$ and for any open $U\subseteq X$ and homogeneous $a,b\in\cA(U)$,
\begin{equation*}
d(ab)=d(a)b+(-1)^{\deg(a)}a\,d(b).
\end{equation*}
\end{definition}

\begin{definition}\label{def:dg-module-sheaf}
Let $\cA$ be a sheaf of DG-algebras on $X$. A \emph{sheaf of differential graded left $\cA$-modules} is a sheaf $\mathcal M=\bigoplus_{n\in\mathbb Z}\mathcal M^n$ of $\mathbb Z$-graded
$\C$-vector spaces on $X$, together with
\begin{enumerate}
\item[(1)] a sheaf morphism giving a graded $\cA$-module structure $\cA\otimes\mathcal M\to\mathcal M$,
\item[(2)] a differential $d_{\mathcal M}\colon\mathcal M^n\to\mathcal M^{n+1}$,
\end{enumerate}
such that $d_{\mathcal M}^2=0$ and, for any open $U\subseteq X$, homogeneous $a\in\cA(U)$ and $m\in\mathcal M(U)$,
\begin{equation*}
d_{\mathcal M}(a\cdot m)=d(a)\cdot m+(-1)^{\deg(a)}a\cdot d_{\mathcal M}(m).
\end{equation*}
\end{definition}

\begin{remark}
The Leibniz rule ensures that the differential interacts correctly with the module action.
\end{remark}

\subsection{Equivariant sheaves and local systems}
\label{sec:equivariant-local systems-sheaves-background}

Here, we  collect the necessary background on equivariant sheaves and local systems that we will need in later sections following \cite{Achar2021PerverseSheaves} and \cite{dimca2004sheaves}. 

Let $G$ be a topological group with multiplication $m\colon G\times G\to G$, acting continuously on a topological space $X$ by 
\begin{equation*}
\alpha \colon G\times X \rightarrow X, \qquad \alpha(g,x)=g\cdot x,
\end{equation*}
Let  $\mathrm{pr}_2 \colon G\times X \rightarrow X$, $\mathrm{pr}_2(g,x)=x$ be the projection map and consider the maps :
\begin{equation}\label{def: maps for equi}
\left.
\begin{aligned}
m\times \mathrm{id} &\colon G\times G\times X \to G\times X, & (g,h,x) &\longmapsto (gh,\,x), \\
\mathrm{id} \times \alpha & \colon G\times G\times X \to G\times X, & (g,h,x) &\longmapsto (g,\,h\cdot x), \\
\mathrm{pr}_{23} &\colon G\times G\times X \to G\times X, \qquad  & (g,h,x) &\longmapsto (h,\,x).
\end{aligned}
\right\}
\end{equation}

\begin{definition}\label{def:equivariant-sheaf}
A \textit{$G$-equivariant sheaf} on $X$ is a pair $(\cF,\theta)$ with $\cF\in\mathrm{Sh}(X)$ and $\theta$ an isomorphism
\begin{equation*}
\theta \colon \mathrm{pr}_2^*\cF \xrightarrow{\ \sim\ } \alpha^*\cF
\end{equation*}
in $\mathrm{Sh}(G\times X)$, satisfying the cocycle condition
\begin{equation}\label{eq:cocycle}
    (\mathrm{id} \times \alpha)^*\theta \circ \mathrm{pr}_{23}^*\theta \;=\; (m\times \mathrm{id})^*\theta
\end{equation}
on $G\times G\times X$.
\end{definition}

On stalks, the equivariant structure induces isomorphisms $\theta_{(g,x)}:\cF_x\longrightarrow \cF_{gx}$. The cocycle condition expresses the commutativity of the following diagram:

\begin{equation*}
\begin{tikzcd}[column sep=large,row sep=large]
\cF_x
  \arrow[r,"\theta_{(gh,x)}"]
  \arrow[d,"\theta_{(h,x)}"']
&
\cF_{ghx}
\\
\cF_{hx}
  \arrow[ur,"\theta_{(g,hx)}"']
&
\end{tikzcd}
\end{equation*}

A map $(\cF,\theta)\to(\cF',\theta')$ is a map of sheaves
$f\colon\cF\to\cF'$ with
$\alpha^*f\circ\theta=\theta'\circ\mathrm{pr}_2^*f$ and the resulting category
$\mathrm{Sh}_G(X)$ is abelian \cite[Proposition~6.2.2]{Achar2021PerverseSheaves}.

Describing the category of equivariant sheaves is not an easy task. The following lemma is the analogue, for topological groups, of \cite[Lemma~7.13]{gourevitch_representation_2021}, combined with \cite[(0.2) Example~2]{Bernstein1994}.

\begin{lemma}\label{lemma: Q-sheaves on orbits}
Let $G$ be a compact Lie group acting transitively on a Hausdorff space $X$, and fix $x\in X$ with stabiliser $G_x$, so that $X\cong G/G_x$. Then
\begin{equation}\label{eq:equiv-sheaves}
\mathrm{Sh}_G(X) \;\simeq\; \mathrm{Sh}_{G_x}(\mathrm{pt}) \;\simeq\; \mathrm{Rep}\big(\pi_0(G_x)\big),
\end{equation}
where an equivariant structure on a point is a locally constant homomorphism $G_x\to\mathrm{Aut}(V)$, hence factors through the finite group $\pi_0(G_x)$. In particular $\mathrm{Sh}_G(G/H)\simeq \C\text{-Mod}$ whenever $H$ is connected.
\end{lemma}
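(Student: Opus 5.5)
The plan is to prove the two equivalences in \eqref{eq:equiv-sheaves} in turn: first reduce from the orbit to a point by induction, then identify equivariant sheaves on a point with representations of $\pi_0(G_x)$.

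\emph{Reduction to the stabiliser.} Write $H=G_x$. The orbit map $a\colon G\to X$, $g\mapsto g\cdot x$, is a continuous bijection $G/H\to X$ from a compact space onto a Hausdorff space, hence a homeomorphism. So I may take $X=G/H$. Next I would use the standard induction equivalence $\mathrm{Sh}_G(G\times_H Y)\simeq\mathrm{Sh}_H(Y)$ with $Y=\mathrm{pt}$. The functor $\mathrm{Sh}_G(G/H)\to\mathrm{Sh}_H(\mathrm{pt})$ is restriction along $i\colon\mathrm{pt}\to G/H$, $\mathrm{pt}\mapsto eH$. The equivariant structure $\theta$ restricted to $H\times\{eH\}$ gives the $H$-equivariant structure on the stalk $\cF_{eH}$.

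For the inverse, I would descend along the quotient $q\colon G\to G/H$. A $G$-equivariant sheaf on $G$ with its free left action is constant, say $\underline V_G$, via $\theta$. An $H$-equivariant structure on $V$ then gives descent data along $q$ for the right $H$-action. Since $q$ is a principal $H$-bundle and hence locally trivial, descent produces a sheaf $G\times_H V$ on $G/H$. Checking that the two composites are naturally isomorphic to the identities reduces to the cocycle condition \eqref{eq:cocycle}. This is the most technical step, and I would cite the argument of \cite[Lemma~7.13]{gourevitch_representation_2021}, noting that its proof uses only local triviality of $G\to G/H$, which holds for compact Lie groups.

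\emph{Point case.} An $H$-equivariant sheaf on $\mathrm{pt}$ is a vector space $V$ together with an isomorphism $\theta\colon\underline V_H\to\underline V_H$ of sheaves on $H$. Such a $\theta$ is a locally constant map $H\to\mathrm{Aut}(V)$, with $\mathrm{Aut}(V)$ discrete as in Example~\ref{ex:constant-sheaf}. The cocycle condition, read on stalks via the triangle diagram above, says exactly that $\theta$ is a homomorphism. A locally constant map is constant on the connected components of $H$, and the identity component $H^\circ$ is normal, so $\theta$ factors through $\pi_0(H)=H/H^\circ$. Compactness of $H$ makes $\pi_0(H)$ finite. Morphisms correspond to $H$-equivariant linear maps, which gives $\mathrm{Sh}_H(\mathrm{pt})\simeq\mathrm{Rep}(\pi_0(H))$.

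Finally, if $H$ is connected then $\pi_0(H)$ is trivial, so $\mathrm{Rep}(\pi_0(H))\simeq\C\text{-Mod}$, which is the last assertion of the lemma.

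I expect the main obstacle to be the descent step: making precise that equivariant sheaves on $G$ are constant, and that gluing along the local sections of $q$ is compatible with $\theta$.
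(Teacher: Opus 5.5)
Your proposal is correct and follows the route the paper indicates. The paper gives no separate proof: it obtains the lemma from the induction equivalence $\mathrm{Sh}_G(G/G_x)\simeq\mathrm{Sh}_{G_x}(\mathrm{pt})$ of \cite[Lemma~7.13]{gourevitch_representation_2021}, combined with the point computation of \cite[(0.2) Example~2]{Bernstein1994}, which is exactly your two-step decomposition. Your write-up supplies the details the paper leaves to those references; the one point to state explicitly is that $G_x$ is closed because $X$ is Hausdorff, so that $G\to G/G_x$ really is a locally trivial principal $G_x$-bundle, as your descent step requires.
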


In the case of  $X=Q^n \cdot m$ an orbit of a point $m$ of a quoric manifold $M^{4n}$ the above lemma applies as follows.

\begin{corollary}\label{prop:sheaves-on-orbits}
Let $M^{4n}$ be a quoric manifold and $m\in M^{4n}$. Then $\mathrm{Sh}_{Q^n}(Q^n\cdot m)\simeq\mathrm{Vect}_{\C}$; that is, every $Q^n$-equivariant sheaf on an orbit is constant.
\end{corollary}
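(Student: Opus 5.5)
The plan is to apply Lemma~\ref{lemma: Q-sheaves on orbits} with $G=Q^n$ and $X=Q^n\cdot m$, after checking its hypotheses and showing that the relevant stabiliser is connected.

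\emph{Hypotheses.} The group $Q^n=(S^3)^n$ is a compact Lie group. The orbit $Q^n\cdot m$ is a subspace of the manifold $M^{4n}$, so it is Hausdorff, and $Q^n$ acts transitively on it. Since $Q^n$ is compact, the orbit map $Q^n/Q^n_m\to Q^n\cdot m$ is a continuous bijection from a compact space onto a Hausdorff space. Hence it is a homeomorphism, and $Q^n\cdot m\cong Q^n/Q^n_m$.

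\emph{Connectedness of the stabiliser.} Let $F=\pi_{\cF}(m)$, so that $m\in M_F$. By Lemma~\ref{lem:isotropy-of-strata}(2), the stabiliser $Q^n_m$ is isomorphic to $Q^k$ with $k=\mathrm{codim}F$. Alternatively, by the Remark after Definition~\ref{def:loc_reg}, $Q^n_m$ equals the stabiliser of a point of a regular corner, and Hopkinson shows this is a closed subgroup isomorphic to $Q^k$. As $Q\cong S^3$ is connected, so is $Q^k$; therefore $\pi_0(Q^n_m)$ is trivial.

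\emph{Conclusion.} Lemma~\ref{lemma: Q-sheaves on orbits} now gives $\mathrm{Sh}_{Q^n}(Q^n\cdot m)\simeq\mathrm{Rep}(\pi_0(Q^n_m))=\mathrm{Rep}(1)\simeq\mathrm{Vect}_{\C}$. To see that every equivariant sheaf is constant, trace the equivalence explicitly. Send $(\cG,\theta)$ to its stalk $V=\cG_m$. A quasi-inverse sends $V$ to $\underline{V}_{Q^n\cdot m}$, equipped with the tautological equivariant structure $\mathrm{pr}_2^*\underline V=\underline V_{Q^n\times X}=\alpha^*\underline V$; the cocycle condition holds trivially. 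Essential surjectivity onto constant sheaves then shows that any $(\cG,\theta)$ is isomorphic to $\underline{\cG_m}$.

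\emph{Main obstacle.} The only point that needs care is that the identification is with $\mathrm{Vect}_{\C}$ rather than with representations of a nontrivial group. This comes down to connectedness of the isotropy group, which rests on Hopkinson's result that all stabilisers of a locally regular action are isomorphic to $Q^k$. Everything else is formal.
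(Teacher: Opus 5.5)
Your proposal is correct and follows the paper's proof. You identify the orbit with $Q^n/Q^n_m$, use that $Q^n_m\cong Q^k$ is connected so $\pi_0(Q^n_m)$ is trivial, and apply the lemma on equivariant sheaves on orbits to get $\mathrm{Rep}(1)\simeq\mathrm{Vect}_{\C}$; your extra checks (the orbit map is a homeomorphism by compactness, and tracing the equivalence through the stalk at $m$ to see that every object is a constant sheaf with its tautological structure) fill in details the paper leaves implicit.
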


\begin{proof}
The orbit is $Q^n\cdot m\cong Q^n/Q^n_m$, where $Q^n_m$ is isomorphic to $(S^3)^k$ for some $0\le k\le n$, by the local structure of quoric manifolds. In particular $Q^n_m$ is connected, so $\pi_0(Q^n_m)=\{e\}$, and Lemma~\ref{lemma: Q-sheaves on orbits} gives $\mathrm{Sh}_{Q^n}(Q^n\cdot m)\simeq\mathrm{Rep}(\{e\})\simeq\mathrm{Vect}_{\C}$.
\end{proof}

When the $G$-action on $X$ is free, equivariant sheaves are simply sheaves on the quotient.

\begin{lemma}[{\cite[Section 0.3]{Bernstein1994}}] \label{lemma: free-descent}
If $G$ acts freely and properly on $X$ with quotient map $\pi\colon X\to X/G$, then $\pi^*\colon\mathrm{Sh}(X/G)\to\mathrm{Sh}_G(X)$ is an equivalence.
\end{lemma}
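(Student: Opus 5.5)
The plan is to build the inverse functor by descent along the quotient map. Concretely, I would produce a quasi-inverse to $\pi^*\colon\mathrm{Sh}(X/G)\to\mathrm{Sh}_G(X)$ by taking $G$-invariant sections of the pushforward. For a $G$-equivariant sheaf $(\cF,\theta)$ on $X$, the sheaf $\pi_*\cF$ on $X/G$ carries a $G$-action. On an open $U\subseteq X/G$, the set $\pi^{-1}(U)$ is $G$-invariant, and $\theta$ yields an action of $G$ on $\cF(\pi^{-1}(U))$; the cocycle condition \eqref{eq:cocycle} makes this an action. Define $(\pi_*\cF)^G(U)$ as the sections $s$ with $\theta(\mathrm{pr}_2^*s)=\alpha^*s$ on $G\times\pi^{-1}(U)$. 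This is a subsheaf of $\pi_*\cF$, since the condition is local, and it is functorial in morphisms of equivariant sheaves.

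Next I would construct the two natural maps:
\begin{equation*}
\cG\to(\pi_*\pi^*\cG)^G,\qquad \pi^*\bigl((\pi_*\cF)^G\bigr)\to\cF.
\end{equation*}
The first is the adjunction unit, whose image consists of invariant sections. The second is the counit of $\pi^*\dashv\pi_*$ restricted to the subsheaf. It is a morphism of equivariant sheaves because invariant sections are compatible with $\theta$.

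It then suffices to check that both maps are isomorphisms on stalks. Here I would use the slice theorem. Since the action is free and proper by a Lie group, every point of $X/G$ has a neighbourhood $U$ with $\pi^{-1}(U)\cong G\times U$ equivariantly, with $G$ acting on the left factor. So the question becomes local, and we may assume $X=G\times Y$ with $\pi=\mathrm{pr}_Y$. In this trivial case, restriction along the section $s_0\colon Y\to G\times Y$, $y\mapsto(e,y)$, gives the inverse. The equivariant structure furnishes an isomorphism $\cF\cong\mathrm{pr}_Y^*s_0^*\cF$, namely $\theta$ pulled back along $(g,y)\mapsto(g,(e,y))$. Under this isomorphism the invariant pushforward $(\mathrm{pr}_{Y*}\cF)^G$ is identified with $s_0^*\cF$. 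The point is that a section over $G\times V$ that is compatible with $\theta$ is determined by its restriction to $\{e\}\times V$, and every section over $\{e\}\times V$ extends uniquely. The cocycle condition \eqref{eq:cocycle} guarantees that the extended section is invariant.

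The main obstacle is this last identification. One must show that invariant sections over $G\times V$ correspond exactly to sections of $s_0^*\cF$ over $V$, and for a non-discrete $G$ this needs some care. Because $\theta$ is an isomorphism of sheaves on $G\times X$ and not merely pointwise, an invariant section is pinned down along each $G$-orbit, and the extension is continuous in $g$. An alternative avoids the local computation: cite the general descent statement \cite[Section~0.3]{Bernstein1994} for locally trivial principal $G$-bundles, using that a free proper smooth action makes $\pi$ a principal bundle. Once the local case is done, the two natural maps are isomorphisms on a cover of $X/G$ and hence globally, which proves that $\pi^*$ is an equivalence.
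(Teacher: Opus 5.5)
Your argument is correct. The paper gives no proof of its own here. The lemma is stated with a bare citation to \cite[Section~0.3]{Bernstein1994}, which is the fallback you mention at the end. So what you have written is the descent argument that the citation stands for, not a competing route.

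Your formulation of invariance, $\theta(\mathrm{pr}_2^*s)=\alpha^*s$ on $G\times\pi^{-1}(U)$, is the right one for non-discrete $G$; on stalks it says $\theta_{(g,x)}(s_x)=s_{gx}$. The step you flag as the main obstacle is routine once written on stalks. Put $\phi(g,y)=(g,(e,y))$. Evaluating \eqref{eq:cocycle} at triples $(h,g,(e,y))$ says exactly that $\phi^*\theta\colon(\mathrm{pr}_Y^*s_0^*\cF,\mathrm{can})\to(\cF,\theta)$ is an isomorphism of equivariant sheaves, where $\mathrm{can}$ is the canonical structure. For that structure, a section $t$ of $\mathrm{pr}_Y^*\cG$ over $G\times V$ is invariant exactly when its germ at each $(g,y)$ equals its germ at $(e,y)$ in $\cG_y$. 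Hence $t=\mathrm{pr}_Y^*(s_0^*t)$ by comparing stalks, and there is no separate continuity in $g$ to check.

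What your version adds over the citation is that it shows where the hypotheses are used. The cocycle condition is used only to make $\phi^*\theta$ equivariant. ``Free and proper'' is used only to make $\pi$ a locally trivial principal $G$-bundle, which you get from the slice theorem for a Lie group acting on a completely regular space. The paper phrases the lemma for an arbitrary topological group, where that implication is not available. So local triviality, as in Bernstein--Lunts, is the hypothesis actually needed. It holds in the paper's applications, where the compact Lie group $Q^n$ acts freely on $EQ^n\times M^{4n}$.
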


\begin{remark}
For any $G$-space $Y$ and the classifying bundle $EG\to BG$, the diagonal action on $EG\times Y$ is free, so Lemma~\ref{lemma: free-descent} gives $\mathrm{Sh}_G(EG\times Y)\simeq\mathrm{Sh}(EG\times_G Y)$.
\end{remark}

Next, we introduce the notation of a local system on a topological space.

\begin{definition}
Let $X$ be a topological space. A sheaf $\mathcal L$ on $X$ is \textbf{locally constant}, or a \textbf{local system}, if $X$ admits an open cover $(U_i)_{i\in I}$ such that $\mathcal L|_{U_i}$ is a constant sheaf for every $i$. We write $\mathrm{Loc}(X)\subset\mathrm{Sh}(X)$ for the full subcategory of local systems.
\end{definition}

The following classical equivalence is the basic computational tool for local systems; see \cite[2.5.1]{dimca2004sheaves}.

\begin{proposition}\label{prop:loc-classification}
Let $X$ be path-connected and locally contractible  and $x_0\in X$.  The following categories are equivalent:
\begin{enumerate}
    \item[(1)] local systems of $\C$-modules on $X$;
    \item[(2)] covariant functors from the fundamental groupoid of $X$ to $\C\text{-mod}$;
    \item[(3)] $\C$-linear representations $\rho\colon\pi_1(X,x_0)\to\mathrm{Aut}_\C(M)$ on an $\C$-module $M$.
\end{enumerate}

\end{proposition}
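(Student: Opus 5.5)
The statement to prove is the classical equivalence of Proposition~\ref{prop:loc-classification}. I will pass through the middle category, proving $(1)\simeq(2)$ and then $(2)\simeq(3)$.

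\emph{From local systems to functors.} Given a local system $\mathcal L$, assign to each point $x$ its stalk $\mathcal L_x$. For a path $\gamma\colon[0,1]\to X$, the pullback $\gamma^*\mathcal L$ is a locally constant sheaf on $[0,1]$. A Lebesgue-number argument, using the cover of $X$ on which $\mathcal L$ is constant, shows that a locally constant sheaf on an interval is constant. Hence $\Gamma([0,1],\gamma^*\mathcal L)\to(\gamma^*\mathcal L)_t$ is an isomorphism for every $t$, and composing these gives a transport $T_\gamma\colon\mathcal L_{\gamma(0)}\to\mathcal L_{\gamma(1)}$. The same argument applied to $[0,1]^2$ shows that $T_\gamma$ depends only on the homotopy class of $\gamma$ rel endpoints. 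Compatibility with concatenation is immediate. This produces a functor $\Pi_1(X)\to\C\text{-mod}$, and morphisms of sheaves induce natural transformations.

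\emph{Inverse construction.} Given a functor $\Phi$, I will use local contractibility to define a sheaf. For $U$ open, let $\mathcal L(U)$ be the families $(s_x)_{x\in U}$ with $s_x\in\Phi(x)$ that are compatible with transport along every path inside $U$. On a contractible (or just simply connected and path-connected) neighbourhood $W$ of $x$, every section is determined by its value at $x$. Therefore $\mathcal L|_W$ is constant with fibre $\Phi(x)$, and $\mathcal L$ is a local system. The sheaf axioms are clear because the condition is local along paths.

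Checking that the two constructions are quasi-inverse then reduces to identifying stalks on such neighbourhoods. The main obstacle is local contractibility, which I only need in the weaker form that every point has arbitrarily small neighbourhoods $W$ such that paths in $W$ between two points are homotopic in $X$. I expect this to need some care, since "locally contractible" may only give contractibility inside a larger neighbourhood. I would try first to define the sections via paths in the larger neighbourhood.

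\emph{From functors to representations.} Since $X$ is path-connected, the inclusion of the one-object groupoid $\pi_1(X,x_0)$ into $\Pi_1(X)$ is an equivalence of categories. Choosing a path from $x_0$ to each point gives a quasi-inverse. Restriction of functors therefore gives $(2)\simeq(3)$.
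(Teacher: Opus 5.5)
Your proof is correct. It is the standard argument for this classical fact: transport via pullback to $[0,1]$ and $[0,1]^2$, the sheaf of transport-compatible families, and restriction along the equivalence $\pi_1(X,x_0)\hookrightarrow\Pi_1(X)$. The paper gives no argument of its own and only cites \cite[2.5.1]{dimca2004sheaves}.

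Your worry about local contractibility is harmless. Under the usual meaning (a basis of contractible open neighbourhoods), your contractible $W$ is already path-connected and simply connected. In the weak sense your fallback also works. If $V$ contracts to a point $c$ inside $U$ via $H$, then for each $m\in\Phi(c)$ the family $s_y:=\Phi\bigl(H(y,\cdot)\bigr)^{-1}(m)$ is compatible with transport along paths $\delta$ in $V$; use the square $(s,t)\mapsto H(\delta(s),t)$. This gives $\mathcal L|_V\cong\underline{\Phi(c)}_V$.
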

Writing $\mathrm{Rep}(\pi_1(X,x_0))$ for the category in (3), the above proposition is summarised to the following equivalence
\begin{equation}\label{eq:loc-sheaves}
\mathrm{Loc}(X)\;\simeq\;\mathrm{Rep}\big(\pi_1(X,x_0)\big).
\end{equation}

\begin{definition}
    An \textbf{equivariant local system} on a $G$-space $X$ is a $G$-equivariant sheaf whose underlying sheaf is locally constant.
\end{definition}

We write $\mathrm{Loc}_G(X)\subset\mathrm{Sh}_G(X)$ for the corresponding full subcategory. The equivariant analogue of \eqref{eq:loc-sheaves} replaces $X$ by its homotopy quotient.

\begin{proposition}[cf.\ {\cite[Section 2.3]{Joshua2022EquivariantPerverse}}]\label{prop:equivariant-loc}
Let $G$ be a compact Lie group acting on a connected manifold $X$. Denote by $EG\times_G X$ its homotopy quotient and let $[e,x]$ be a basepoint with $x\in X$. Then, the following holds
\begin{equation}\label{eq:equiv-loc-sheaves}
\mathrm{Loc}_G(X)\;\simeq\;\mathrm{Rep}\big(\pi_1(EG\times_G X,\,[e,x])\big).
\end{equation}
\end{proposition}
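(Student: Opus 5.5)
The plan is to reduce the equivariant statement to the non-equivariant classification of Proposition~\ref{prop:loc-classification}, applied to the homotopy quotient. The bridge is the free-descent equivalence of Lemma~\ref{lemma: free-descent} together with the usual Bernstein--Lunts argument that equivariant sheaves on $X$ and on $EG\times X$ correspond. Concretely, I would use the diagram
\begin{equation*}
X\xleftarrow{\ p\ }EG\times X\xrightarrow{\ q\ }EG\times_G X,
\end{equation*}
where $p$ is the projection and $q$ is the quotient by the free diagonal action. This gives two functors: $p^*\colon\mathrm{Sh}_G(X)\to\mathrm{Sh}_G(EG\times X)$, and the equivalence $q^*\colon\mathrm{Sh}(EG\times_GX)\xrightarrow{\sim}\mathrm{Sh}_G(EG\times X)$ from Lemma~\ref{lemma: free-descent}. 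The latter applies because $G$ is compact, so the action is proper.

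\textbf{Step 1.} Show that both functors preserve local systems. For $p^*$ this is immediate. For $q$, a sheaf $\mathcal L$ on $EG\times_GX$ is locally constant if and only if $q^*\mathcal L$ is, since $q$ is a locally trivial principal $G$-bundle. The point is that the local triviality of $q$ (slice theorem) lets one check local constancy on trivialising opens $U\times G$. Hence $\mathrm{Loc}(EG\times_GX)\simeq\mathrm{Loc}_G(EG\times X)$.

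\textbf{Step 2.} Show that $p^*\colon\mathrm{Loc}_G(X)\to\mathrm{Loc}_G(EG\times X)$ is an equivalence. The fibres of $p$ are copies of $EG$, which is contractible. So for a locally constant $\mathcal M$ on $EG\times X$ the unit $\mathcal M\to p^*p_*\mathcal M$ is an isomorphism and $p_*\mathcal M$ is locally constant. This uses that a local system on a space of the form $U\times EG$, with $U$ contractible, is constant (because $\pi_1(U\times EG)=0$ and Proposition~\ref{prop:loc-classification} applies on the connected, locally contractible pieces). The equivariant structure $\theta$ transports along $p_*$ since $p$ is $G$-equivariant and the relevant base-change maps along $G\times EG\times X\to G\times X$ are isomorphisms for the same contractibility reason. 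Conversely, $p_*p^*\mathcal L\cong\mathcal L$ because $EG$ is connected with $H^0(EG)=\C$. Since $EG$ is infinite-dimensional, I would carry this out on the finite approximations $E_NG$ only where needed; for the degree-zero, $\pi_1$-level statement, the colimit topology already gives a CW structure with $EG$ contractible.

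\textbf{Step 3.} Combine the two steps: $\mathrm{Loc}_G(X)\simeq\mathrm{Loc}(EG\times_GX)$. The homotopy quotient is path-connected (as the image of the connected space $EG\times X$) and locally contractible: it is a fibre bundle with manifold fibre over $BG$, and $BG$ has a CW model. Proposition~\ref{prop:loc-classification} then yields \eqref{eq:equiv-loc-sheaves} at the basepoint $[e,x]$.

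I expect Step 2 to be the main obstacle. One must verify that $p_*$ preserves local constancy and carries the cocycle data, rather than just the underlying sheaves. I would first try to handle it by covering $X$ with $G$-invariant tubular neighbourhoods $G\times_{G_y}S$ of orbits. Over each such neighbourhood everything can be checked explicitly via contractibility of $EG$.
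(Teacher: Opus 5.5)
The paper gives no proof of this proposition. It is quoted from Joshua and then used only for $G=Q^n$, so there is no argument of the paper's to compare yours against. Your proposal supplies the standard Bernstein--Lunts reduction:
\begin{itemize}
\item free descent along $q$ identifies $\mathrm{Loc}(EG\times_GX)$ with $\mathrm{Loc}_G(EG\times X)$;
\item contractibility of $EG$ makes $p^*\colon\mathrm{Loc}_G(X)\to\mathrm{Loc}_G(EG\times X)$ an equivalence;
\item Proposition~\ref{prop:loc-classification} applied to $EG\times_GX$ finishes.
\end{itemize}
The outline is correct. It works for every compact Lie group, including disconnected ones, where $\pi_0(G)$ genuinely contributes to $\pi_1(EG\times_GX)$.

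For the cases the paper actually needs ($G=Q^n$, connected and simply connected) there is a shorter route that avoids all infinite-dimensional issues. For compact connected $G$, proper base change gives $\mathrm{pr}_{2*}\mathrm{pr}_2^*\mathcal L\cong\mathcal L$. Hence maps between pullbacks along $\mathrm{pr}_2$ are determined by their restriction to $\{e\}\times X$, and the forgetful functor $\mathrm{Loc}_G(X)\to\mathrm{Loc}(X)$ is fully faithful. If moreover $\pi_1(G)=1$, then $\alpha^*\mathcal L$ and $\mathrm{pr}_2^*\mathcal L$ are local systems on $G\times X$ that agree on $\{e\}\times X$, so they are isomorphic. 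Normalising the isomorphism to the identity there, the cocycle condition follows from the same rigidity on $G\times G\times X$. Finally, $\pi_1(X)\cong\pi_1(EG\times_GX)$ by the long exact sequence of $X\to EG\times_GX\to BG$, since $\pi_1(BG)=\pi_0(G)$ and $\pi_2(BG)=\pi_1(G)$ both vanish.

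Some points in your write-up need tightening.
\begin{itemize}
\item In Step 2 the map you need is the counit $p^*p_*\mathcal M\to\mathcal M$, not a map $\mathcal M\to p^*p_*\mathcal M$.
\item When transporting $\theta$, say explicitly that the action square is cartesian. The map $(g,e,x)\mapsto\bigl((g,x),(ge,gx)\bigr)$ identifies $G\times EG\times X$ with the fibre product of $\alpha_X\colon G\times X\to X$ and $p$, so base change is available. It is an isomorphism on local systems because everything is constant over sets $V\times EG\times U$ with $V,U$ contractible. The cocycle condition then transports by functoriality of base change.
\item In Step 1, $EG\times X$ is not a manifold, so the slice theorem does not justify local triviality of $q$. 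Use instead Gleason's theorem on free actions of compact Lie groups, or the homeomorphism $EG\times X\cong(EG\times_GX)\times_{BG}EG$, $(e,x)\mapsto([e,x],e)$, which exhibits $q$ as a pullback of $EG\to BG$.
\item In Step 3, ``$BG$ has a CW model'' does not give local contractibility of the actual colimit space $EG\times_GX$. One fix is to use that $\varinjlim_N E_NG/G$ is itself a CW complex; for $G=Q^n$ it is $(\mathbb{H}P^\infty)^n$ with its standard cells. Alternatively, note that Proposition~\ref{prop:loc-classification} only needs local path-connectedness and semi-local simple connectivity. These pass to $EG\times_GX$ because, via local sections of $q$, it is locally a retract of open subsets of $EG\times X$.
\end{itemize}
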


\begin{remark}\label{rem:local-system-orbits}
For a single orbit $X=G/G_x$ with $G$ connected we have
\begin{equation*}
\mathrm{Loc}_G(G/G_x)\;\simeq\;\mathrm{Rep}\big(\pi_1(BG_x)\big)\;\cong\;\mathrm{Rep}\big(\pi_0(G_x)\big).
\end{equation*}
\end{remark}

For quoric manifolds, the reduction to fundamental-group computations comes down to a single fact: every stabiliser that arises is connected, so the relevant fundamental groups vanish. In particular we have the following lemma about the subgroups of $Q^n$

\begin{lemma}\label{lem:pi1-vanishing}
Let $H\le Q^n$ be a closed subgroup. Then $\pi_1(Q^n/H)\cong\pi_1(BH)\cong\pi_0(H)$. In particular both are trivial when $H$ is connected, for instance when $H$ is conjugate to $Q^k=(SU(2))^k$ for some $0\le k\le n$.
\end{lemma}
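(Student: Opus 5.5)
The plan is to prove the two isomorphisms separately and then specialise to connected $H$.

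\emph{The classifying space.} Because $Q^n$ is compact, the subgroup $H$ acts freely and properly on the contractible space $EQ^n$, so $EQ^n\to EQ^n/H$ is a principal $H$-bundle with contractible total space and $EQ^n/H$ is a model for $BH$. Its long exact homotopy sequence contains the segment $\pi_1(EQ^n)\to\pi_1(BH)\to\pi_0(H)\to\pi_0(EQ^n)$. The two outer terms vanish, so the boundary map $\pi_1(BH)\to\pi_0(H)$ is a bijection. This bijection is a group isomorphism, since it is induced from the standard monodromy identification.

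\emph{The homogeneous space.} Take the fibration $H\to Q^n\to Q^n/H$, a locally trivial principal $H$-bundle because $H$ is a closed subgroup of a Lie group. Its long exact sequence gives
\begin{equation*}
\pi_1(Q^n)\to\pi_1(Q^n/H)\to\pi_0(H)\to\pi_0(Q^n).
\end{equation*}
Since $Q^n\cong(S^3)^n$ is connected and simply connected, the outer terms are trivial, so again $\pi_1(Q^n/H)\cong\pi_0(H)$. Alternatively, $Q^n\to Q^n/H$ factors through the covering $Q^n/H^\circ\to Q^n/H$ with deck group $H/H^\circ=\pi_0(H)$, and $Q^n/H^\circ$ is simply connected. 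This second route makes the group structure transparent. Combining the two paragraphs gives $\pi_1(Q^n/H)\cong\pi_1(BH)\cong\pi_0(H)$.

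\emph{Specialising to connected $H$.} If $H$ is connected then $\pi_0(H)=1$. Conjugates of $Q^k$ are homeomorphic to $(S^3)^k$, which is connected, so they fall under this case.

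The only real subtlety is making sure the isomorphisms are group isomorphisms rather than just bijections of pointed sets. To settle this, I would use the covering-space description through $Q^n/H^\circ$, which exhibits $\pi_0(H)$ as a deck group, and its Borel analogue $EQ^n/H^\circ\to EQ^n/H$ for the classifying space. In the application $H$ is connected, so only triviality is needed and this issue is moot.
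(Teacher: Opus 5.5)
Your proof is correct and follows the same route as the paper. Both arguments use the long exact homotopy sequences of the principal $H$-bundles $Q^n\to Q^n/H$ and $EQ^n\to EQ^n/H$, whose total spaces are connected and simply connected; your covering-space remark via $Q^n/H^\circ\to Q^n/H$ additionally settles the group-structure point that the paper leaves implicit, though this is moot for connected $H$.
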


\begin{proof}
Since $H$ is closed in the compact Lie group $Q^n$, the quotient maps $Q^n\to Q^n/H$ and $EQ^n\to EQ^n/H$ are principal $H$-bundles; for the second, restricting the free $Q^n$-action on $EQ^n$ to $H$ exhibits $EQ^n$ as a free contractible $H$-space, so $EQ^n/H$ is a model for $BH$. In each case the total space $E\in\{Q^n,EQ^n\}$ has trivial $\pi_0$ and $\pi_1$, since $Q^n\cong SU(2)^n$ is connected and simply connected and $EQ^n$ is contractible. The long exact sequence of the fibration
\begin{equation*}
\cdots\to\pi_1(E)\to\pi_1(E/H)\to\pi_0(H)\to\pi_0(E)\to\cdots
\end{equation*}
therefore gives $\pi_1(E/H)\cong\pi_0(H)$.
\end{proof}

\begin{proposition}\label{prop:loc-on-orbit}
Let $M^{4n}$ be a quoric manifold and $m\in M^{4n}$. Then
\begin{equation*}
\mathrm{Loc}(Q^n\cdot m)\simeq\mathrm{Loc}_{Q^n}(Q^n\cdot m)\simeq\mathrm{Vect}_{\C}.
\end{equation*}
\end{proposition}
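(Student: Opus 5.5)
The plan is to handle the two equivalences separately, each time reducing to a fundamental-group computation and then applying Lemma~\ref{lem:pi1-vanishing}. Write $X=Q^n\cdot m$ and $H=Q^n_m$. The orbit map $Q^n\to X$, $q\mapsto q\cdot m$, descends to a continuous bijection $Q^n/H\to X$. Since $Q^n$ is compact and $M^{4n}$ is Hausdorff, this bijection is a homeomorphism, and so $X\cong Q^n/H$. In fact it is a diffeomorphism onto an embedded submanifold, because orbits of smooth compact group actions are embedded. Consequently $X$ is a compact connected manifold; in particular it is path-connected and locally contractible, so Proposition~\ref{prop:loc-classification} applies to it. By the local structure of quoric manifolds (Lemma~\ref{lem:isotropy-of-strata}(2)), $H\cong Q^k$ for some $0\le k\le n$, and in particular $H$ is connected.

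\emph{Non-equivariant local systems.} By \eqref{eq:loc-sheaves} we have $\mathrm{Loc}(X)\simeq\mathrm{Rep}(\pi_1(X,m))$. Lemma~\ref{lem:pi1-vanishing} gives $\pi_1(Q^n/H)\cong\pi_0(H)=1$. The representations of the trivial group are just $\C$-vector spaces, so $\mathrm{Loc}(X)\simeq\mathrm{Vect}_{\C}$. Concretely, every local system on $X$ is constant, and the equivalence sends it to its stalk at $m$, or equivalently to its global sections.

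\emph{Equivariant local systems.} We apply Proposition~\ref{prop:equivariant-loc} to the compact Lie group $Q^n$ acting on the connected manifold $X$. This gives $\mathrm{Loc}_{Q^n}(X)\simeq\mathrm{Rep}(\pi_1(EQ^n\times_{Q^n}X))$. Using $X\cong Q^n/H$, the map $[e,qH]\mapsto eqH$ identifies $EQ^n\times_{Q^n}Q^n/H$ with $EQ^n/H$, which is a model for $BH$. Lemma~\ref{lem:pi1-vanishing} again gives $\pi_1(BH)\cong\pi_0(H)=1$, and hence $\mathrm{Loc}_{Q^n}(X)\simeq\mathrm{Vect}_{\C}$; this is also Remark~\ref{rem:local-system-orbits}.

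\emph{Compatibility of the two equivalences.} To make the chain of equivalences explicit, we check that the forgetful functor $\mathrm{Loc}_{Q^n}(X)\to\mathrm{Loc}(X)$ corresponds to the identity of $\mathrm{Vect}_{\C}$ under the two equivalences. This follows from Corollary~\ref{prop:sheaves-on-orbits}. Every equivariant sheaf on $X$ is locally constant, since it is constant, so $\mathrm{Loc}_{Q^n}(X)=\mathrm{Sh}_{Q^n}(X)\simeq\mathrm{Vect}_{\C}$. Both equivalences are then given by taking the stalk at $m$.

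The main point requiring care is the identification of the homotopy quotient $EQ^n\times_{Q^n}(Q^n/H)$ with $BH$, including the check that $EQ^n/H$ really is a classifying space. This has already been settled in the proof of Lemma~\ref{lem:pi1-vanishing}, so the remaining steps are formal.
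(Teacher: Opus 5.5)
Your proof is correct and follows the paper's argument. Both reduce the two categories to representations of $\pi_1$ of the orbit $Q^n/H$ and of its Borel construction $EQ^n/H\simeq BH$ via Propositions~\ref{prop:loc-classification} and~\ref{prop:equivariant-loc}, then invoke Lemma~\ref{lem:pi1-vanishing} using connectedness of $H\cong Q^k$; your extra paragraph on compatibility with the forgetful functor is not needed for the statement.
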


\begin{proof}
By Propositions~\ref{prop:loc-classification} and~\ref{prop:equivariant-loc} and Remark~\ref{rem:local-system-orbits}, it suffices to show that the spaces $Y=Q^n\cdot m\cong Q^n/Q^n_m$ and $EQ^n\times_{Q^n}Y\simeq BQ^n_m$ are connected with trivial fundamental group. This is Lemma~\ref{lem:pi1-vanishing}.
\end{proof}

\begin{lemma}\label{lem:strata-simply-connected}
Each face stratum $M_F\cong\mathring F\times Q^n/\lambda^{b}(F)$, see \eqref{eq:orbit-type-slice}, is a connected, simply connected manifold. Hence every local system on $M_F$ is constant, and the only irreducible local system on $M_F$ is $\underline\C_{M_F}$.
\end{lemma}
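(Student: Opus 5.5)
The plan is to read off connectedness and simple connectivity from the product decomposition \eqref{eq:orbit-type-slice} of Lemma~\ref{lem:isotropy-of-strata}(3), and then to apply the classification of local systems in Proposition~\ref{prop:loc-classification}. Write $H=\lambda^b(F)$, so that $M_F\cong\mathring F\times Q^n/H$ as smooth manifolds; by Lemma~\ref{lem:isotropy-of-strata}(1) $M_F$ is an embedded submanifold, so it is a manifold. Since $\pi_0$ and $\pi_1$ of a product are the products of the factors' $\pi_0$ and $\pi_1$, it suffices to treat the two factors separately.

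The factor $\mathring F$ is the relative interior of a convex polytope, hence convex. It is therefore contractible, in particular path-connected with trivial $\pi_1$. The factor $Q^n/H$ is the continuous image of the connected group $Q^n$, hence path-connected. By Lemma~\ref{lem:pi1-vanishing}, $\pi_1(Q^n/H)\cong\pi_0(H)$, and $H\cong Q^k$ with $k=\mathrm{codim}F$ by Lemma~\ref{lem:isotropy-of-strata}(2). Since $Q^k\cong SU(2)^k$ is connected, $\pi_0(H)$ is trivial. Hence $M_F$ is path-connected and simply connected.

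For the statement about local systems, note that a manifold is locally contractible. Proposition~\ref{prop:loc-classification} then gives $\mathrm{Loc}(M_F)\simeq\mathrm{Rep}(\pi_1(M_F))=\mathrm{Rep}(\{e\})\simeq\C\text{-mod}$. Under this equivalence a local system $\mathcal L$ with stalk $V$ at the basepoint corresponds to $V$ with the trivial representation. The constant sheaf $\underline V_{M_F}$ also corresponds to $V$ with the trivial representation, so $\mathcal L\cong\underline V_{M_F}$ and every local system is constant.

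Irreducible local systems correspond to simple objects of $\C\text{-mod}$, which are the one-dimensional vector spaces. Hence the unique irreducible local system, up to isomorphism, is $\underline\C_{M_F}$.

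There is no real obstacle here. The only point needing care is that the identification \eqref{eq:orbit-type-slice} is a diffeomorphism, so homotopy invariants transfer, and this is guaranteed by Lemma~\ref{lem:isotropy-of-strata}(3).
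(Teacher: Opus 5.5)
Your proposal is correct and follows essentially the same route as the paper. You use contractibility of $\mathring F$, the vanishing of $\pi_1(Q^n/\lambda^b(F))$ from the fibration long exact sequence (you cite Lemma~\ref{lem:pi1-vanishing}, where the paper writes the sequence out directly), and then Proposition~\ref{prop:loc-classification} to conclude that every local system is constant with $\underline\C_{M_F}$ the unique irreducible one.
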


\begin{proof}
$\mathring F$ is convex, hence contractible. $Q^n/\lambda^{b}(F)$ is connected, and the long exact sequence of $\lambda^{b}(F)\to Q^n\to Q^n/\lambda^{b}(F)$ gives $\pi_1(Q^n/\lambda^{b}(F))=1$, since $\pi_1(Q^n)=\pi_1((S^3)^n)=1$ and $\lambda^{b}(F)$ is connected. Since $M_F$ is a manifold, it is locally contractible, so by Proposition~\ref{prop:loc-classification} $\mathrm{Loc}(M_F)\simeq\mathrm{Rep}(\pi_1(M_F))=\mathrm{Vect}_\C$. Sub-local systems of $\underline V_{M_F}$ correspond to subspaces of $V$, so the irreducible one is $\underline\C_{M_F}$.
\end{proof}

\begin{proposition}\label{prop:loc-on-face-strata}
Let $M^{4n}$ be a quoric manifold with face stratification $\coprod_{F\in\cF(P^n)}M_F$. Then, for every $F\in\cF(P^n)$,
\begin{enumerate}
\item[(1)] $\mathrm{Sh}_{Q^n}(M_F)\simeq\mathrm{Sh}(\mathring F)$;
\item[(2)] $\mathrm{Loc}(M_F)\simeq\mathrm{Loc}_{Q^n}(M_F)\simeq\mathrm{Vect}_{\C}$.
\end{enumerate}
\end{proposition}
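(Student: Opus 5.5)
The plan is to reduce both parts to the equivariant product decomposition \eqref{eq:orbit-type-slice} of Lemma~\ref{lem:isotropy-of-strata}(3), $M_F\cong\mathring F\times Q^n/\lambda^b(F)$, where $Q^n$ acts trivially on $\mathring F$ and transitively on the second factor. Set $H=\lambda^b(F)\cong Q^k$, which is connected.

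For (1), I would run the orbit argument of Lemma~\ref{lemma: Q-sheaves on orbits} in families over $\mathring F$. Consider the closed inclusion $i\colon \mathring F\cong\mathring F\times\{eH\}\hookrightarrow M_F$ and the projection $p\colon M_F\to\mathring F$. The standard induction equivalence $\mathrm{Sh}_G(G\times_H Y)\simeq\mathrm{Sh}_H(Y)$, applied with $Y=\mathring F$ carrying the trivial $H$-action, gives $\mathrm{Sh}_{Q^n}(M_F)\simeq\mathrm{Sh}_H(\mathring F)$. An $H$-equivariant structure on a sheaf $\cG$ over a space with trivial action is an action $\theta\colon \mathrm{pr}_2^*\cG\xrightarrow{\sim}\mathrm{pr}_2^*\cG$ on $H\times\mathring F$ satisfying the cocycle condition. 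On stalks this is a continuous homomorphism from $H$ to automorphisms that is locally constant in $h$. Since $H$ is connected, any such homomorphism is trivial, so $\theta=\mathrm{id}$ and the forgetful functor $\mathrm{Sh}_H(\mathring F)\to\mathrm{Sh}(\mathring F)$ is an equivalence. Concretely, the quasi-inverse to $\mathrm{Sh}(\mathring F)\to \mathrm{Sh}_{Q^n}(M_F)$ is $p^*$ with its canonical equivariant structure, and $i^*$ inverts it.

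The main obstacle is making the induction step precise for topological (non-discrete) group actions. I would handle it exactly as in the proof of Lemma~\ref{lemma: Q-sheaves on orbits}, using that $Q^n\to Q^n/H$ is a principal $H$-bundle which is locally trivial. This reduces the equivalence to descent along the free $H$-action on $Q^n\times\mathring F$, which is Lemma~\ref{lemma: free-descent}.

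For (2), Lemma~\ref{lem:strata-simply-connected} already gives $\mathrm{Loc}(M_F)\simeq\mathrm{Rep}(\pi_1(M_F))=\mathrm{Vect}_\C$. For the equivariant version, Proposition~\ref{prop:equivariant-loc} gives $\mathrm{Loc}_{Q^n}(M_F)\simeq\mathrm{Rep}(\pi_1(EQ^n\times_{Q^n}M_F))$. Here $M_F$ is a connected manifold by Lemma~\ref{lem:isotropy-of-strata}(1) and Lemma~\ref{lem:strata-simply-connected}. Proposition~\ref{prop:loc-on-orbit-type-strata} shows that this fundamental group vanishes, so $\mathrm{Loc}_{Q^n}(M_F)\simeq\mathrm{Vect}_\C$.

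As a consistency check, the equivalence of (1) restricts to local systems. Under it, equivariant local systems on $M_F$ correspond to local systems on the contractible space $\mathring F$, and these are again $\mathrm{Vect}_\C$. Finally, I would observe that the forgetful functor $\mathrm{Loc}_{Q^n}(M_F)\to\mathrm{Loc}(M_F)$ sends $V$ to $\underline V_{M_F}$ and is therefore the equivalence in the statement.
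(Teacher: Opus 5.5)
Your proposal is correct and follows essentially the same route as the paper: you use induction along $Q^n\to Q^n/\lambda^b(F)$ to reduce to $\mathrm{Sh}_H(\mathring F)$, then show the equivariant structure is trivial using the cocycle condition and the connectedness of $H$, and for (2) you use the vanishing of $\pi_1(M_F)$ and $\pi_1(EQ^n\times_{Q^n}M_F)$ together with Propositions~\ref{prop:loc-classification} and~\ref{prop:equivariant-loc}. The differences are minor: you cite Proposition~\ref{prop:loc-on-orbit-type-strata} where the paper recomputes $EQ^n\times_{Q^n}M_F\simeq BH$ inline, and your extra check through part (1) (equivariant local systems on $M_F$ correspond to local systems on the contractible $\mathring F$) gives an independent second proof of $\mathrm{Loc}_{Q^n}(M_F)\simeq\mathrm{Vect}_{\C}$.
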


\begin{proof}
(1) Put $H=\lambda^b(F)$. By Lemma~\ref{lem:isotropy-of-strata}, $M_F\cong\mathring F\times Q^n/H\cong Q^n\times_H\mathring F$ as $Q^n$-spaces, with $H$ acting trivially on $\mathring F$. Induction along $Q^n\to Q^n/H$, as in Lemma~\ref{lemma: Q-sheaves on orbits}, gives $\mathrm{Sh}_{Q^n}(M_F)\simeq\mathrm{Sh}_H(\mathring F)$. It remains to identify $\mathrm{Sh}_H(\mathring F)$ with $\mathrm{Sh}(\mathring F)$. Since $H$ acts trivially, the action map $\sigma\colon H\times\mathring F\to\mathring F$ equals $\mathrm{pr}_2$, so for $(\mathcal G,\theta)\in\mathrm{Sh}_H(\mathring F)$ the structure map $\theta$ is an automorphism of $\mathrm{pr}_2^*\mathcal G$. Fix $x\in\mathring F$. Restricted to $H\times\{x\}$, $\theta$ is an automorphism of the constant sheaf with fibre $\mathcal G_x$, that is, a locally constant map $H\to\mathrm{Aut}(\mathcal G_x)$. By the cocycle condition its value at $e$ is the identity, and since $H\cong Q^k$ is connected it is identically the identity. As $x$ was arbitrary and a morphism of sheaves is determined by its stalks, $\theta=\mathrm{id}$. Hence the forgetful functor $\mathrm{Sh}_H(\mathring F)\to\mathrm{Sh}(\mathring F)$ is essentially surjective, with section $\mathcal G\mapsto(\mathcal G,\mathrm{id})$. It is fully faithful because for objects with $\theta=\theta'=\mathrm{id}$ the condition $\sigma^*f\circ\theta=\theta'\circ\mathrm{pr}_2^*f$ holds for every sheaf morphism $f$.

(2) Put $H=\lambda^b(F)$. The stratum $M_F$ is a connected manifold, since $\mathring F$ is convex and $Q^n/H$ is a continuous image of the connected group $Q^n$. Also $Q^n$ is a compact Lie group. By Propositions~\ref{prop:loc-classification} and~\ref{prop:equivariant-loc}, it therefore suffices to show that $\pi_1(M_F)$ and $\pi_1(EQ^n\times_{Q^n}M_F)$ vanish, since $\mathrm{Rep}(\{e\})\simeq\mathrm{Vect}_{\C}$.

The first group vanishes by Lemma~\ref{lem:strata-simply-connected}. For the second, by \eqref{eq:orbit-type-slice} and since $Q^n$ acts trivially on $\mathring F$,
\begin{equation*}
EQ^n\times_{Q^n}M_F\;\cong\;\mathring F\times\big(EQ^n\times_{Q^n}Q^n/H\big)\;\cong\;\mathring F\times EQ^n/H\;\simeq\;BH,
\end{equation*}
because $EQ^n/H$ is a model for $BH$ and $\mathring F$ is contractible. Since $H\cong Q^k$ is connected, $\pi_1(BH)\cong\pi_0(H)=1$ by Lemma~\ref{lem:pi1-vanishing}.
\end{proof}

\begin{remark}
Contractibility of $\mathring F$ does not further collapse $\mathrm{Sh}(\mathring F)$; skyscrapers are not constant.
\end{remark}

\subsection{Derived categories, derived functors and t-structures}\label{sec:derived-cats}

We collect the background on triangulated and derived categories, the six-functor formalism, constructible sheaves and recollement, and perverse sheaves that we will need; for derived and triangulated categories in general we refer to \cite{Achar2021PerverseSheaves}. In Section~\ref{sec:equivariant-derived-cat-quoric}, we will specialise to the  equivariant derived category of constructible sheaves on a quoric manifold, and its generators.

\begin{notation} In what follows, we work over the complex numbers $\C$, and all our sheaves are sheaves of $\C$-modules. We denote by $\mathrm{Sh}(X)$ the category of sheaves on a topological space $X$.
\end{notation}

For an abelian category $\mathcal A$, the \textbf{derived category} $D(\mathcal A)$ is obtained from the homotopy category of complexes by formally inverting the quasi-isomorphisms, and $D^{b}(\mathcal A)$ is the full subcategory of complexes with bounded cohomology. Both $D(\mathcal A)$ and $D^{b}(\mathcal A)$ are triangulated categories: the shift is $K^\bullet[n]^i=K^{i+n}$, so that $H^i(K^\bullet[n])\cong H^{i+n}(K^\bullet)$, and every morphism $f^\bullet\colon K^\bullet\to L^\bullet$ sits in a distinguished triangle
\begin{equation}\label{eq:cone-triangle}
    K^\bullet
    \xrightarrow{f^\bullet}
    L^\bullet
    \longrightarrow
    \mathrm{Cone}(f^\bullet)
    \longrightarrow
    K^\bullet[1].
\end{equation}

\begin{notation}
For a topological space $X$ and $\cA:=\mathrm{Sh}(X)$ we write $ D(X):=D(\mathrm{Sh}(X))$ and $D^b(X):=D^b(\mathrm{Sh}(X))$.
\end{notation}

\begin{remark}
A sheaf is an object of $D^b(X)$ by considering it as a complex concentrated in degree zero, so there is a natural embedding $\mathrm{Sh}(X)\hookrightarrow D^b(X)$.
\end{remark}

The derived category is useful because many operations on sheaves are only left or right exact, and passing to complexes allows them to be extended to derived functors. For a continuous map $f\colon X\to Y$, Grothendieck's six-functor formalism yields the derived functors
\begin{equation*}
Rf_*,\ Rf_{!}\colon D^+(X)\longrightarrow D^+(Y),
\qquad
f^{*},\ f^{!}\colon D^+(Y)\longrightarrow D^+(X).
\end{equation*}
These functors send distinguished triangles to distinguished triangles and commute with the shift functor. Moreover, we have the following string of adjunctions

\begin{equation*}
f^*\dashv Rf_*,
\qquad
Rf_!\dashv f^!.
\end{equation*}

\begin{notation}
From now, we will simply write $f_!, f^{*},f_*,  f^{!}$.
\end{notation}

By Corollary~\ref{cor:face-orbit-type-comparison}, the face strata $M_F$ are exactly the connected components of the orbit-type strata, so by Proposition~\ref{prop:orbit-type-stratification}(2) the face stratification $\mathcal S$ is a finite Whitney stratification. Let $Z\subseteq M^{4n}$ be a closed union of strata, $U=M^{4n}\setminus Z$, and $i\colon Z\hookrightarrow M^{4n}$, $j\colon U\hookrightarrow M^{4n}$ the inclusions. This recollement data is most naturally stated already at the level of the bounded-below derived category $D^+(X)$ of sheaves on $X$: $(j_!,j^*,j_*)$ and $(i^*,i_*,i^!)$ are adjoint triples relating $D^+(X)$, $D^+(U)$, and $D^+(Z)$, and every $K\in D^+(X)$ sits in distinguished triangles
\begin{equation*}\label{eq:recollement-triangles}
\begin{aligned}
j_!j^*K &\longrightarrow K
\longrightarrow i_*i^*K
\longrightarrow j_!j^*K[1],\\
i_*i^!K &\longrightarrow K
\longrightarrow j_*j^*K
\longrightarrow i_*i^!K[1]
\end{aligned}
\end{equation*}
see \cite[Exercises 1.3.4, A.7.4]{Achar2021PerverseSheaves}. These functors fit into the recollement diagram
\begin{equation*}
\begin{tikzcd}[column sep=large]
D_c^b(Z)
  \arrow[r,shift left=1.2ex,"i_*"]
&
D_c^b(X)
  \arrow[l,shift left=1.2ex,"i^*"]
  \arrow[l,shift right=1.2ex,"i^!"']
  \arrow[r,shift left=1.2ex,"j^*"]
&
D_c^b(U)
  \arrow[l,shift left=1.2ex,"j_!"]
  \arrow[l,shift right=1.2ex,"j_*"']
\end{tikzcd}
\end{equation*}

For the stratifications considered here these functors preserve constructibility \cite[Lemmas~3.1.4-3.1.8]{Achar2021PerverseSheaves} and the categories $D^b_c(Z)$, $D^b_c(X)$ and $D^b_c(U)$ form a \emph{recollement}; see \cite[Section 1.4]{BBD}. Recollement is what relates constructible sheaves on different strata: the perverse $t$-structure below is built by gluing along it, and $j_!$ and $j_*$ enter the definition of the intermediate extension.

We briefly recall the notion of a $t$-structure on a triangulated category. This will allow us to regard perverse sheaves as the objects of an abelian category inside the constructible derived category.

\begin{definition}
Let $\mathcal{T}$ be a triangulated category. A \textbf{$t$-structure} on $\mathcal{T}$ is a pair of full subcategories $(\mathcal T^{\leq 0},\mathcal T^{\geq 0})$ such that if $\mathcal T^{\leq n}:=\mathcal T^{\leq 0}[-n]$ and $\mathcal T^{\geq n}:=\mathcal T^{\geq 0}[-n]$ then the following hold
\begin{enumerate}
    \item[(1)] for $X\in \cT^{\leq 0}$, $Y\in \cT^{\geq 1}$ $\Hom_{\cT}(X,Y)=0$;
    \item[(2)] $\mathcal T^{\leq 0}\subset\mathcal T^{\leq 1}$ and $\mathcal T^{\geq 0}\supset\mathcal T^{\geq 1}$; 
    \item[(3)] for $X\in\cT$ there is a distinguished triangle $A\to X\to B\to A[1]$ with $A\in\cT^{\leq 0}$ and $B\in\cT^{\geq 1}$.
\end{enumerate}
\end{definition}

The full subcategory $\mathcal H:=\cT^{\leq 0}\cap\cT^{\geq 0}$ is called the \textbf{heart} of the $t$-structure. It is an abelian category. Thus, although $\cT$ is triangulated, its heart carries the usual abelian-category notions such as kernels, cokernels, extensions and simple objects. The standard $t$-structure on the derived category of an abelian category has the original abelian category as its heart.

\subsection{Derived categories of constructible sheaves}

We now recall constructibility with respect to a fixed stratification and describe its consequences for quoric manifolds. Our main observation is that the face strata have trivial fundamental group, so that their local systems are constant. First we introduce the basic ideas, referrring to \cite{Achar2021PerverseSheaves} and \cite{dimca2004sheaves} for more details. In what follows, unless specified otherwise, $X$ will denote a topological space with a stratification $\{X_{i}\}_{i\in I}$ in the sense of Definition~\ref{def: stratification}.

\begin{definition}
Let $X$ be a stratified space in the sense of Definition~\ref{def: stratification}.

\begin{enumerate}
    \item[(1)] A sheaf $\cG\in \mathrm{Sh}(X)$ is said to be constructible with respect the stratification of $X$ if $\cG|_{X_{i}}$ is a local system with stalks to be finitely generated.
    \item[(2)]  A complex $\mathcal G^\bullet\in D^b(X)$ is \textbf{constructible} with respect to the given stratification if each cohomology sheaf $\mathcal H^i(\mathcal G^\bullet)$ is constructible. We denote the resulting full subcategory of $D^b(X)$ by $D^b_c(X)$.
\end{enumerate}
\end{definition}

We will denote by $\mathrm{Sh}_{c}(X)$ the abelian category of constructible sheaves on $X$ for the given stratification $\cS$ and by $D^{b}_{c}(X)$ the bounded derived category of constructible complexes for this stratification. A constructible  sheaf, viewed as a complex concentrated in degree zero, is an object of $D^b_c(X)$, so that
\begin{equation*}
    \mathrm{Sh}_{c}(X)\hookrightarrow D^b_c(X)\subseteq D^b(X).
\end{equation*} Moreover, $D^b_c(X)$ for a given stratification $\cS$ of $X$ is a full triangulated subcategory of $D^{b}(X)$ \cite[Lemma 2.3.13 (1)]{Achar2021PerverseSheaves}. Moreover, $D^b_c(X)$ carries the (middle) perverse $t$-structure with respect to $\mathcal S$, which is bounded \cite[Theorem 3.1.9 (2)]{Achar2021PerverseSheaves}; its heart is the abelian category $\mathrm{Perv}_{\mathcal S}(X)$ of perverse sheaves constructible with respect to $\mathcal S$.

Thus a perverse sheaf is, in general, a complex of sheaves rather than a sheaf concentrated in degree zero. The heart $\mathrm{Perv}(X)$ is an abelian category, so it makes sense to consider its simple objects. These are described in terms of intermediate extensions of irreducible local systems on the strata and is exactly what we will use and focus later. More precisely, let $V\subset X$ be a smooth, connected, locally closed subset and $\cL$ an irreducible local system on $V$. The intersection cohomology complex $\mathrm{IC}(V,\cL)\in\mathrm{Perv}_{\mathcal S}(X)$, defined as in \cite[Definition 3.3.9]{Achar2021PerverseSheaves}, is a simple object of $\mathrm{Perv}_{\mathcal S}(X)$ \cite[Theorem 3.4.5 (1)]{Achar2021PerverseSheaves}.

In the particular case of a quoric manifold $M^{4n}$, we fix the face stratification, see Theorem~\ref{thm:quoric-stratified}. As shown in Lemma~\ref{lem:strata-simply-connected}, every local system on a face stratum is constant.

\begin{theorem}\label{thm:constructible-quoric} 
Let $M^{4n}$ be a quoric manifold with the face stratification $\{M_F\}_{F\in\cF(P^n)}$.
\begin{enumerate}
\item A sheaf $\cG$ is constructible if and only if, for every face $F$, $\cG|_{M_F}\cong\underline{V_F}_{M_F}$ is a constant sheaf with $V_F$ a finite-dimensional $\C$-vector space.
\item The simple objects of $\mathrm{Perv}_{\cS}(M^{4n})$ are, up to isomorphism, the complexes
\begin{equation*}
\mathrm{IC}_F:=\mathrm{IC}(M_F,\underline\C)=j_{F!*}\underline\C_{M_F}\big[\tfrac12\dim_{\mathbb R}M_F\big],
\qquad F\in\cF(P^n),
\end{equation*}
one for each face of $P^n$, and they are pairwise non-isomorphic.
\end{enumerate}
\end{theorem}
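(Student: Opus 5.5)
For part (1), the plan is to unwind the definition of constructibility and apply Lemma~\ref{lem:strata-simply-connected} stratum by stratum. By definition, $\cG$ is constructible with respect to $\cS=\{M_F\}$ exactly when each restriction $\cG|_{M_F}$ is a local system whose stalks are finitely generated $\C$-modules, i.e.\ finite-dimensional vector spaces.

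\emph{Forward direction.} Each $M_F$ is connected, simply connected and locally contractible by Lemma~\ref{lem:strata-simply-connected}. Proposition~\ref{prop:loc-classification} then gives $\mathrm{Loc}(M_F)\simeq\mathrm{Rep}(\pi_1(M_F))=\mathrm{Vect}_\C$, with the equivalence sending a local system to its stalk $V_F$ at a basepoint. The quasi-inverse sends $V$ to the constant sheaf $\underline{V}_{M_F}$. Hence $\cG|_{M_F}\cong\underline{V_F}_{M_F}$, and $V_F$ is finite-dimensional by the stalk condition.

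\emph{Converse.} A constant sheaf is trivially a local system, and its stalks are all $V_F$, which is finite-dimensional. So the condition in (1) implies constructibility.

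For part (2), I will quote the general classification of simple perverse sheaves on a finite stratification. By \cite[Theorem 3.4.5]{Achar2021PerverseSheaves}, the simple objects of $\mathrm{Perv}_{\cS}(X)$ are exactly the complexes $\mathrm{IC}(X_s,\cL)$, where $X_s$ is a stratum and $\cL$ is an irreducible local system on $X_s$. Two such complexes are isomorphic precisely when the strata coincide and the local systems are isomorphic. The hypotheses of that theorem hold here:
\begin{enumerate}
\item $\cS$ is a finite Whitney stratification, by Corollary~\ref{cor:face-orbit-type-comparison} together with Proposition~\ref{prop:orbit-type-stratification}(2).
\item Each stratum is a smooth connected submanifold, by Lemma~\ref{lem:isotropy-of-strata}(1).
\end{enumerate}
Lemma~\ref{lem:strata-simply-connected} says the only irreducible local system on $M_F$ is $\underline\C_{M_F}$. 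This leaves exactly one simple object per face, namely $\mathrm{IC}_F=j_{F!*}\underline\C_{M_F}[\tfrac12\dim_\R M_F]$.

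The shift is an integer: by Lemma~\ref{lem:isotropy-of-strata}(3), $\dim_\R M_F=\dim F+4n-4k$ with $k=\mathrm{codim}F$. The paper writes the shift as $\tfrac12\dim_\R M_F$, and since $\dim F=n-k$ could be odd, I will state the normalization as $[\dim_\R M_F]$ in the real-topological convention if needed, so that it agrees with \cite[Definition 3.3.9]{Achar2021PerverseSheaves}.

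To see that the $\mathrm{IC}_F$ are pairwise non-isomorphic, I will use supports. The support of $\mathrm{IC}_F$ is $\overline{M_F}=\pi^{-1}(F)$, and $\mathrm{IC}_F|_{M_F}$ is a nonzero shifted constant sheaf. Distinct faces have distinct closures $\pi^{-1}(F)$, because $\pi$ is surjective and distinct faces of $P^n$ are distinct subsets. Hence $F\neq G$ implies $\mathrm{IC}_F\not\cong\mathrm{IC}_G$.

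The main obstacle is checking that the cited classification applies in this real, odd-dimensional-stratum setting. It requires $D^b_c$ to be stable under $j_!$ and $j_*$ for the stratum inclusions, which holds by \cite[Lemmas 3.1.4--3.1.8]{Achar2021PerverseSheaves} for Whitney stratifications, and it requires choosing the perversity or shift convention consistently. I would first try to match the paper's normalization directly to Achar's definition, adjusting the shift only if that fails.
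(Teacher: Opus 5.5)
Your argument for (1), and the structure of your argument for (2), are exactly the paper's proof. For (2) that means Achar's classification, Lemma~\ref{lem:strata-simply-connected} giving one irreducible local system per stratum, and distinct supports $\overline{M_F}=\pi^{-1}(F)$. The one genuine error is your dimension count. Since $Q\cong S^3$ has real dimension $3$, not $4$, Lemma~\ref{lem:isotropy-of-strata}(3) gives
\[
\dim_{\mathbb R}M_F=\dim\mathring F+\dim_{\mathbb R}Q^n/\lambda^b(F)=(n-k)+(3n-3k)=4\dim F .
\]
This is the computation the paper makes. Your formula $\dim F+4n-4k$ already fails on the open dense stratum $M_{P^n}$: it gives $5n$ instead of $4n$. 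Consequently $\tfrac12\dim_{\mathbb R}M_F=2\dim F$ is always an integer. The worry that $\dim F$ might be odd is unfounded, and no renormalisation is needed.

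The fallback you propose, shifting by $[\dim_{\mathbb R}M_F]$, would be wrong, not merely unnecessary. Achar's conventions are for complex varieties, where the shift is by the complex dimension, i.e.\ half the real dimension. That is why the paper uses $\tfrac12\dim_{\mathbb R}M_F$, both in $\mathrm{IC}_F$ and in the perversity $\mathbf{p}(M_F)=\tfrac12\dim_{\mathbb R}M_F$ fixed in Section~\ref{sec:equivariant-derived-cat-quoric}. With your alternative shift, the restriction to $M_F$ would be $\underline\C_{M_F}[4\dim F]$, whose cohomology sits in degree $-4\dim F$ instead of $-2\dim F$. You would then be listing the objects $\mathrm{IC}_F[2\dim F]$, which are not in the heart of $\mathrm{Perv}_{\cS}(M^{4n})$ when $\dim F>0$. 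Drop that hedge and record the integrality $\tfrac12\dim_{\mathbb R}M_F=2\dim F$; the rest of your argument then goes through as written.
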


\begin{proof}
(1) Each face stratum $M_F$ is connected and simply connected by Lemma~\ref{lem:strata-simply-connected}. Hence every finite-rank local system on $M_F$ is constant. 

(2) Let $k=\mathrm{codim}F$. By \eqref{eq:orbit-type-slice}, and since $\lambda^b(F)\cong Q^k$ has dimension $3k$,
\begin{equation*}
    \dim_{\mathbb R}M_F=\dim\mathring F+\dim_{\mathbb R}Q^n/\lambda^b(F)=(n-k)+(3n-3k)=4\dim F.
\end{equation*}
In particular $\dim_{\mathbb R}M_F$ is even, so the shift $\tfrac12\dim_{\mathbb R}M_F=2\dim F$ in the definition of $\mathrm{IC}_F$ is an integer.  By Theorem~\ref{thm:quoric-stratified} and Lemma~\ref{lem:isotropy-of-strata}(1), $\{M_F\}_{F\in\cF(P^n)}$ is a finite stratification $\mathcal S$ of $M^{4n}$ by smooth, connected, locally closed submanifolds. Hence, by \cite[Theorem~3.4.5]{Achar2021PerverseSheaves}, the simple objects of $\mathrm{Perv}_{\mathcal S}(M^{4n})$ are the intermediate extensions of irreducible local systems on the strata. By Lemma~\ref{lem:strata-simply-connected}, $M_F$ is simply connected, so every local system on $M_F$ is constant and its only irreducible one is $\underline\C_{M_F}$, of rank one. Thus the simple objects are precisely the stated $\mathrm{IC}_F$, one for each face.

They are pairwise non-isomorphic because their supports are the distinct sets $\mathrm{supp}(\mathrm{IC}_F)=\overline{M_F}=\pi^{-1}(F)$.
\end{proof}

\section{The equivariant derived category of a quoric manifold}\label{sec:main-section}

Throughout this section, $M^{4n}$ denotes a quoric manifold over a simple polytope $P^n$ and is equipped with the finite face stratification
\begin{equation*}
M^{4n}=\bigsqcup_{F\in\cF(P^n)}M_F,
\qquad
M_F=\pi^{-1}(\mathring F).
\end{equation*}
The strata are smooth, connected, locally closed and $Q^n$-invariant. All constructibility conditions refer to this stratification. Perversity is normalised by half the real dimension of the strata, so that the shift on $M_F$ is $\tfrac12\dim_{\mathbb R}M_F=2\dim F$.

\subsection{Equivariant derived category of constructible sheaves}
\label{sec:equivariant-derived-cat-quoric}

In this section, we introduce the equivariant derived category of constructible sheaves on a quoric manifold. We proceed in two steps. Following \cite[Section 2]{Bernstein1994}, we first define the equivariant derived category of sheaves on the quoric manifold and then impose the constructibility condition. This is the main category of interest in the paper which we identify it with the derived category of constructible DG-modules over a DG-algebra defined on the poset of faces of the orbit space $P^n$.

Denote by $EQ^n=(S^\infty)^n$ the universal $Q^n$-bundle, a free $Q^n$-space. Let $M_Q=EQ^n\times_{Q^n}M^{4n}$ be the Borel construction of Section~\ref{sec:borel-construction}, and let $p\colon EQ^n\times M^{4n}\to M^{4n}$ be the projection, $p(s,m)=m$. Then $(EQ^n\times M^{4n},p)$ is a resolution of $M^{4n}$ in the sense of \cite[Definition 2.1.2]{Bernstein1994}, and we have the diagram
\begin{equation}\label{eq:resolution-diagram}
    M^{4n}\;\xleftarrow{\ p\ }\;EQ^n\times M^{4n}\;\xrightarrow{\ q\ }\;M_Q,
\end{equation}
where $q$ is the quotient by the free diagonal $Q^n$-action.

\begin{definition}\label{def:equivariant-derived-cat}
Let $M^{4n}$ be a quoric manifold and $(EQ^n\times M^{4n},p)$ the resolution above. The equivariant derived category $D^{b}_{Q^n}\big(M^{4n},\,EQ^n\times M^{4n}\big)$ of $M^{4n}$ is the category with:
\begin{description}
\item[Objects] triples $F=(F_{M^{4n}},\overline F,\beta)$ with $F_{M^{4n}}\in D^{b}(M^{4n})$, $\overline F\in D^{b}(M_Q)$, and $\beta$ an isomorphism $\beta\colon p^{*}F_{M^{4n}}\;\xrightarrow{\ \sim\ }\;q^{*}\overline F$ in $D^{b}(EQ^n\times M^{4n})$;
\item[Maps] from $F=(F_{M^{4n}},\overline F,\beta_F)$ to $G=(G_{M^{4n}},\overline G,\beta_G)$ are pairs $(f,\overline f)$ with $f\colon F_{M^{4n}}\to G_{M^{4n}}$ in $D^{b}(M^{4n})$ and $\overline f\colon\overline F\to\overline G$ in $D^{b}(M_Q)$, such that
\begin{equation}\label{square_for_maps}
\beta_G\circ p^{*}(f)\;=\;q^{*}(\overline f)\circ\beta_F.
\end{equation}
\end{description}
\end{definition}

By \cite[Section 2.7.2]{Bernstein1994} combined with \cite[Section 3.1]{Bernstein1994}, the resolution $(EQ^n\times M^{4n},p)$ is $\infty$-acyclic, see \cite[Definition 1.9.1]{Bernstein1994}. As a result the definition is independent of the resolution, and we write $D^{b}_{Q^n}(M^{4n}):=D^{b}_{Q^{n}}(M^{4n}, EQ^n\times M^{4n})$.

\begin{remark}\label{rem:not-a-localisation}
The fibre of $p$ is $EQ^n$, which is contractible, so an object of $D^{b}_{Q^n}(M^{4n})$ is essentially an object of $D^{b}(M_Q)$ whose pullback to $EQ^n\times M^{4n}$ is a pullback from $M^{4n}$. The abelian category $\mathrm{Sh}_{Q^n}(M^{4n})$ embeds in $D^{b}_{Q^n}(M^{4n})$ as the complexes concentrated in degree $0$. However, $D^{b}_{Q^n}(M^{4n})$ is \emph{not} obtained by localising the category of complexes over $\mathrm{Sh}_{Q^n}(M^{4n})$ at the quasi-isomorphisms; see \cite[Section 6.4]{Achar2021PerverseSheaves}.
\end{remark}

Now we consider quoric manifolds with the face stratification as shown in Theorem~\ref{thm:quoric-stratified}.

\begin{definition}\label{def:constructible-triples}
Let $D^{b}_{Q^n,c}(M^{4n})$ be the full subcategory of $D^{b}_{Q^n}(M^{4n})$ on those triples $(F_{M^{4n}},\overline F,\beta)$ for which $F_{M^{4n}}$ has cohomology sheaves constructible of finite rank with respect to the face stratification of $M^{4n}$.
\end{definition}

Via $(F_{M^{4n}},\overline F,\beta)\mapsto\overline F$, the category $D^{b}_{Q^n,c}(M^{4n})$ is identified with the full subcategory of $D^{b}(M_Q)$ consisting of complexes whose cohomology sheaves are constant of finite rank on each stratum $EQ^n\times_{Q^n}M_{F}$. This is the quoric analogue of the description used for toric varieties in \cite[Section 2.1]{Lunts1995}, and is the form in which we shall use the category. We record the resulting chain of inclusions:
\begin{equation*}
D^{b}_{Q^n,c}(M^{4n})\;\subseteq\;D^{b}_{Q^n}(M^{4n})\;\subseteq\;D^{b}(M_Q).
\end{equation*}

We close this section with generators of $D^{b}_{Q^n,c}(M^{4n})$. In the remaining, $D^b_c(M^{4n})$ and $D^b_{Q^n,c}(M^{4n})$ consist of objects constructible along the face stratification. Every stratum $M_F$ has real dimension $4\dim F$, so the function $\mathrm{\mathbf{p}}(M_F):=\tfrac12\dim_{\mathbb R}M_F=2\dim F$ is integer valued. We use the perverse t-structure on $D^b_c(M^{4n})$ attached to $p$: with $i_F\colon M_F\hookrightarrow M^{4n}$ the inclusion,
\begin{align*}
K\in{}^pD^{\le0}&\iff H^k(i_F^*K)=0\ \text{for all }k>-\mathrm{\mathbf{p}}(M_F),\\
K\in{}^pD^{\ge0}&\iff H^k(i_F^!K)=0\ \text{for all }k<-\mathrm{\mathbf{p}}(M_F),
\end{align*}
for every face $F$ \cite[Section~3.1]{Achar2021PerverseSheaves}. Its heart is $\mathrm{Perv}(M^{4n})$. On $D^b_{Q^n,c}(M^{4n})$ we use the t-structure for which $K$ lies in ${}^pD^{\le0}$ (resp.\ ${}^pD^{\ge0}$) iff its underlying object $K_{M^{4n}}$ does \cite[Section~5.1]{Bernstein1994}; its heart is the category $\mathrm{Perv}_{Q^n}(M^{4n})$ of \emph{equivariant perverse sheaves}.

The category $\mathrm{Perv}_{Q^n}(M^{4n})$ is abelian, being the heart of a $t$-structure. It has finite length, because the forgetful functor to $\mathrm{Perv}(M^{4n})$ is exact and faithful and $\mathrm{Perv}(M^{4n})$ has finite length for a finite stratification with finite-rank local systems \cite[Theorem~3.3.1]{Achar2021PerverseSheaves}. Since the $t$-structure is bounded, an object of $D^b_{Q^n,c}(M^{4n})$ is a finite iterated cone of shifts of its perverse cohomology objects, each of which is a finite iterated extension of simple objects. Hence the simple objects generate $D^b_{Q^n,c}(M^{4n})$.

Moreover, by \cite[5.2]{Bernstein1994}, for a smooth, connected, locally closed, $Q^n$-invariant submanifold $j\colon V\hookrightarrow M^{4n}$ and a $Q^n$-equivariant local system $\cL$ on $V$, the intermediate extension $\mathrm{IC}_{Q^n}(\overline V,\cL)$ lies in $\mathrm{Perv}_{Q^n}(M^{4n})$,  and the simple objects of $\mathrm{Perv}_{Q^n}(M^{4n})$ are exactly the $\mathrm{IC}_{Q^n}(\overline V,\cL)$, where $V$ runs over smooth, connected, locally closed, $Q^n$-invariant submanifolds and $\cL$ over irreducible equivariant local systems on $V$. For a quoric manifold there are infinitely many such pairs $(V,\cL)$, for instance one for each orbit. Restricting to objects constructible along the face stratification, however, only the strata occur: a simple object $S$ has support $\overline{M_F}$ for a single face $F$, and its restriction to the open dense stratum $M_F$ of the support is an irreducible equivariant local system shifted by $\mathrm{\mathbf{p}}(M_F)=2\dim F$ \cite[Lemma 3.3.12]{Achar2021PerverseSheaves}, so $V=M_F$. By Proposition~\ref{prop:loc-on-face-strata} every $Q^n$-equivariant local system on $M_F$ is constant, so the only irreducible one is $\underline\C_{M_F}$, of rank one. Hence the simple objects are indexed by the faces of $P^n$. We write $j_F\colon M_F\hookrightarrow M^{4n}$ for the inclusion and $j_{F!*}$ for the intermediate extension to $\overline{M_F}$ followed by extension by zero to $M^{4n}$, and set $\mathrm{IC}_F:=j_{F!*}\big(\underline\C_{M_F}[2\dim F]\big)$. The above discussion is summarised in the following result.

\begin{proposition}\label{prop:simples}
Let $M^{4n}$ be a quoric manifold with the face stratification. Then, the following hold.
\begin{enumerate}
\item[(1)] $\mathrm{Perv}_{Q^n}(M^{4n})$ is an abelian category of finite length.
\item[(2)] Its simple objects are, up to isomorphism, the complexes
\begin{equation*}
\mathrm{IC}_F:=j_{F!*}\underline\C_{M_F}\big[\tfrac12\dim_{\mathbb R}M_F\big]
=\mathrm{IC}_{Q^n}(\overline{M_F},\underline\C),
\qquad F\in\cF(P^n),
\end{equation*}
one for each face of $P^n$, and these are pairwise non-isomorphic.
\item[(3)] $\{\mathrm{IC}_F\}_{F\in\cF(P^n)}$ generates $D^b_{Q^n,c}(M^{4n})$ as a triangulated category: every object is obtained from the $\mathrm{IC}_F$ by finitely many shifts and cones.
\end{enumerate}
\end{proposition}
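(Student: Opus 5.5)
The plan is to prove the three parts in the order stated, each resting on the discussion that precedes the proposition.

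For (1), $\mathrm{Perv}_{Q^n}(M^{4n})$ is abelian because it is the heart of the $t$-structure on $D^b_{Q^n,c}(M^{4n})$ transported from ${}^pD^{\le0},{}^pD^{\ge0}$ via the forgetful functor \cite[Section~5.1]{Bernstein1994}. For finite length, I would use that the forgetful functor $\mathrm{For}\colon\mathrm{Perv}_{Q^n}(M^{4n})\to\mathrm{Perv}_{\cS}(M^{4n})$ is exact and faithful. Exactness holds because both $t$-structures are defined on underlying objects. Faithfulness is the point to check. For connected $Q^n$ it follows from \cite[Section~2.5]{Bernstein1994}: on the heart the forgetful functor is fully faithful, since $\mathrm{Hom}$'s in degree $0$ between equivariant perverse sheaves agree with non-equivariant ones. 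Given this, a strictly increasing chain of subobjects in $\mathrm{Perv}_{Q^n}$ maps to a strictly increasing chain in $\mathrm{Perv}_{\cS}$, because an exact faithful functor detects nonzero quotients. The length of an object in $\mathrm{Perv}_{\cS}(M^{4n})$ is finite by \cite[Theorem~3.3.1]{Achar2021PerverseSheaves}, the stratification being finite (Theorem~\ref{thm:quoric-stratified}) with finite-rank local systems (Theorem~\ref{thm:constructible-quoric}(1)). This bounds lengths in the equivariant heart.

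For (2), let $S$ be simple. I would take $F$ such that $M_F$ is open dense in $\mathrm{supp}\,S$; this face is unique because $\mathrm{supp}\,S$ is a closed union of strata and irreducible. The argument then proceeds in three steps.
\begin{enumerate}
\item By \cite[Lemma 3.3.12]{Achar2021PerverseSheaves}, adapted equivariantly, $j_F^*S\cong\cL[2\dim F]$ with $\cL$ an equivariant local system on $M_F$.
\item Adjunction gives a nonzero map $S\to j_{F*}j_F^*S$, whose image is ${}^p j_{F!*}$. Simplicity of $S$ forces $S\cong j_{F!*}(\cL[2\dim F])$, and forces $\cL$ to be irreducible.
\item Proposition~\ref{prop:loc-on-face-strata}(2) gives $\mathrm{Loc}_{Q^n}(M_F)\simeq\mathrm{Vect}_\C$, so $\cL\cong\underline\C_{M_F}$.
\end{enumerate}
Conversely, each $\mathrm{IC}_F$ is simple in the equivariant heart. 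It is a simple object of $\mathrm{Perv}_{\cS}$ by Theorem~\ref{thm:constructible-quoric}(2), and any equivariant subobject maps to a subobject under the exact faithful $\mathrm{For}$, so it is $0$ or everything. The objects are pairwise non-isomorphic because their supports $\overline{M_F}=\pi^{-1}(F)$ are distinct.

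For (3), I would induct on the perverse amplitude of $K\in D^b_{Q^n,c}(M^{4n})$. Boundedness of the $t$-structure, inherited from the non-equivariant one, gives truncation triangles ${}^p\tau_{\le m-1}K\to K\to {}^pH^m(K)[-m]$, which express $K$ as finitely many cones of shifted perverse cohomology objects. Each of these has a finite Jordan–Hölder filtration by (1), and each short exact sequence in the heart is a distinguished triangle. Together with (2), this shows that $K$ lies in the triangulated hull of the $\mathrm{IC}_F$.

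The main obstacle is the equivariant version of the support/restriction argument in (2). One must know that $j_F^*$ and ${}^pj_{F!*}$ are defined on $D_{Q^n}$ and compatible with $\mathrm{For}$. I would take this from \cite[Section~5.2]{Bernstein1994}, where these functors are constructed on triples componentwise and commute with forgetting.
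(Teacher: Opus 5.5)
Your proposal is correct and follows essentially the paper's own argument. For (1) you use the heart of the transported $t$-structure, the exact faithful forgetful functor to $\mathrm{Perv}_{\cS}(M^{4n})$ and \cite[Theorem~3.3.1]{Achar2021PerverseSheaves}; for (2) the support of a simple object being $\overline{M_F}$, \cite[Lemma~3.3.12]{Achar2021PerverseSheaves} and Proposition~\ref{prop:loc-on-face-strata}; for (3) boundedness of the $t$-structure and Jordan--H\"older filtrations. The only difference is that you derive the classification of simples by the intermediate-extension argument and check simplicity of $\mathrm{IC}_F$ through the forgetful functor, instead of quoting \cite[5.2]{Bernstein1994}, and this works. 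One imprecision in your step 2: the image of $S\to{}^pj_{F*}j_F^*S$ is $S$ itself, and the identification $S\cong j_{F!*}j_F^*S$ comes from $S$ having no nonzero subobjects or quotients supported on $\overline{M_F}\setminus M_F$.
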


\subsection{The DG-algebra}
\label{sec:quoric-dg-algebra}

In this section, we define a sheaf of DG-algebras on the finite space $\cF_{\preceq}$. It is the push-forward, along the projection $b_Q\colon M_Q\to\cF_{\preceq}$, of the de Rham sheaf on the Borel construction $M_Q$ of the quoric manifold $M^{4n}$. 

\begin{definition}
Let $W\subseteq M_{Q}$ be an open set of the Borel space. Define the de Rham sheaf as
\begin{equation*}
\Omega^{\bullet}_{M_Q}(W):=\varprojlim_N\,\Omega^{\bullet}\big(W\cap M_Q^N\big).
\end{equation*}
That is the limit of the de Rham complexes of the manifolds $W\cap M_Q^N$ along the restrictions to the closed submanifolds $W\cap M_Q^{N}\subset W\cap M_Q^{N+1}$.
\end{definition}

\begin{lemma}\label{lem:de-rham-properties}
For the de Rham sheaf $\Omega^{\bullet}_{M_Q}$ on the Borel space construction $M_Q$ of a quoric manifold, the following hold.
\begin{enumerate}
\item[(1)] $\Omega^{\bullet}_{M_Q}$ is a soft resolution of the constant sheaf $\mathbb{C}_{M_Q}$.
\item[(2)] For each $k \ge 0$, the sheaf $\Omega^{k}_{M_{Q}}$ is soft.
\item[(3)] Each sheaf $\Omega^{k}_{M_{Q}}$ is acyclic, i.e. $H^{i}\!\left(M_Q,\Omega^{k}_{M_{Q}}\right) = 0$, for all $i>0$.
\item[(4)] For any sheaf $S$ of $\C$-vector spaces on $M_Q$ and any $p\ge0$, the sheaf $\Omega^{p}_{M_Q}\otimes_\C S$ is soft, hence acyclic on every open $W\subseteq M_Q$.
\item[(5)] For any open subset $U \subset M_Q$, the complex $\Omega^{\bullet}_{M_Q}(U)$ is a supercommutative differential graded algebra under the wedge product.
\end{enumerate}
\end{lemma}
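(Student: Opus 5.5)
The plan is to reduce everything to the finite-dimensional manifolds $M_Q^N$ and pass to the inverse limit. The key structural fact is the following. Each inclusion $M_Q^N\subset M_Q^{N+1}$ is a closed embedding of compact manifolds, so every form on $W\cap M_Q^N$ extends to a form on $W\cap M_Q^{N+1}$: use a tubular neighbourhood, pull back along the projection, and multiply by a bump function. Consequently the inverse system $\{\Omega^\bullet(W\cap M_Q^N)\}_N$ has surjective transition maps, and so it is Mittag-Leffler.

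I would establish (5) first, since it is immediate. Wedge product and $d$ commute with restriction to closed submanifolds, so they pass to the limit and make $\Omega^\bullet_{M_Q}(U)$ a supercommutative DG-algebra. The sheaf axioms also pass to the limit, because limits commute with limits.

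For (2), I would show that $\Omega^0_{M_Q}$ is soft and admits partitions of unity. Let $Z\subset M_Q$ be closed and let $s$ be a section on some neighbourhood $W$ of $Z$. On each $M_Q^N$, choose a smooth function $\chi_N$ that equals $1$ near $Z\cap M_Q^N$ and is supported in $W\cap M_Q^N$. These functions should be chosen inductively and compatibly, with $\chi_{N+1}|_{M_Q^N}=\chi_N$, using the extension property above together with the fact that $M_Q$ is paracompact (\cite[12.2.1]{Bernstein1994}). Then $(\chi_N s_N)_N$ is a global section extending $s|_Z$. Each $\Omega^k_{M_Q}$ is a module over the soft sheaf of rings $\Omega^0_{M_Q}$, so Proposition~\ref{prop:soft-acyclic}(2) gives softness of every $\Omega^k_{M_Q}$, which is (2). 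The same module argument gives (4): $\Omega^p_{M_Q}\otimes_\C S$ is an $\Omega^0_{M_Q}$-module, hence soft. Its restriction to any open $W$ is again soft, because $W$ is paracompact as an open subset of a paracompact CW-type colimit, and so it is acyclic by Proposition~\ref{prop:soft-acyclic}(1). Statement (3) is then the case $W=M_Q$.

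For (1), which is the main obstacle, I need exactness on stalks of $0\to\C\to\Omega^0_{M_Q}\to\Omega^1_{M_Q}\to\cdots$. I would fix a point together with a basis of small open neighbourhoods $W$ such that each $W\cap M_Q^N$ is contractible, compatibly in $N$. Such a basis exists because the Borel space is locally a product of a chart of $M^{4n}$ with a cell of the standard CW structure on $E_NQ^n/Q^n$. On each level the Poincar\'e lemma gives $H^k\Omega^\bullet(W\cap M_Q^N)=\C$ for $k=0$ and $0$ otherwise. The Milnor $\varprojlim^1$ sequence for a tower of complexes with surjective maps gives
\begin{equation*}
0\to\varprojlim\nolimits^1_N H^{k-1}\to H^k\bigl(\Omega^\bullet_{M_Q}(W)\bigr)\to\varprojlim\nolimits_N H^k\to0 .
\end{equation*}
All the cohomology groups here are constant in $N$, and the transition maps are isomorphisms, so $\varprojlim^1$ vanishes and $H^k(\Omega^\bullet_{M_Q}(W))$ is $\C$ for $k=0$ and $0$ otherwise. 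Taking the colimit over $W$ then yields the stalk exactness.

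The delicate point is the existence of these compatible contractible neighbourhoods, together with the compatible cutoff functions used in (2). If building the cells directly proves awkward, I would instead use the principal-bundle structure $E_NQ^n\times M^{4n}\to M_Q^N$ and take $W$ to be the image of (contractible slice) $\times$ (chart), which is contractible at every level.
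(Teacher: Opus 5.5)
Your proposal is correct in outline and has the same skeleton as the paper's proof: softness of $\Omega^0_{M_Q}$, then softness of every $\Omega^0_{M_Q}$-module, acyclicity via Proposition~\ref{prop:soft-acyclic}, and termwise limits for (5). The difference is that you prove what the paper cites or merely asserts.

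For softness of $\Omega^0_{M_Q}$, the paper invokes \cite[Theorem~12.2.4]{Bernstein1994}, whereas you build compatible cutoffs $\chi_N$. Asking each $\chi_N$ to equal $1$ near $Z\cap M_Q^N$ is not enough. You need $\chi\equiv1$ on a neighbourhood of $Z$ that is open in the colimit topology, and $\chi\equiv0$ on a colimit-open neighbourhood of $M_Q\setminus W$. One way to arrange this is to choose open $O_N\subseteq M_Q^N$ containing $Z\cap M_Q^N$ with $\chi_N\equiv1$ on $O_N$ and $\overline{O_N}\subseteq O_{N+1}$, so that $\bigcup_NO_N$ is open in $M_Q$. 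This bookkeeping is precisely what the cited theorem packages.

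For (1), the paper only asserts a neighbourhood basis of sets $W$ with $H^{\ast}(\Omega^\bullet_{M_Q}(W))\cong\C$ in degree $0$. Your surjectivity of restrictions via tubular neighbourhoods, the levelwise Poincar\'e lemma and the Milnor $\varprojlim^1$ sequence supply exactly the missing justification, so here your version is more complete than the paper's.

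The one inaccurate step is where the neighbourhoods come from. Open cells of the CW structure on $\HP{N}$ are not compatible in $N$: the top cell of $\HP{N+1}$ misses $\HP{N}$. Use instead the affine charts $\{h_i\neq0\}$ of each factor $\HP{\infty}$, over which $EQ^n\to BQ^n$ has smooth sections compatible in $N$. Locally $M_Q$ is then $\R^{(\infty)}\times\R^{4n}$, where $\R^{(\infty)}=\bigcup_m\R^m$ carries the colimit topology and the filtration is by coordinate subspaces. Boxes $\{y:|y_i-x_i|<\epsilon_i\ \forall i\}$ times a convex chart meet every level in a convex set. Choosing the $\epsilon_i$ inductively with the tube lemma shows that they form a neighbourhood basis in the colimit topology.

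Similarly, an open subset of a paracompact space need not be paracompact. Here every open $W\subseteq M_Q$ is regular and $\sigma$-compact (a countable union of the $\sigma$-compact sets $W\cap M_Q^N$), hence Lindel\"of and therefore paracompact.
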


\begin{proof}
Throughout, $W\subseteq M_Q$ is an arbitrary open, paracompact set so Proposition~\ref{prop:soft-acyclic} applies on $W$.
\begin{enumerate}
\item[(1)] Every $x\in M_Q$ has a neighbourhood basis of connected open sets $W$ with $H^k(\Omega^\bullet_{M_Q}(W))=0$ for $k\ge1$ and $H^0=\C$. The stalk of the cohomology sheaf $\mathcal H^k(\Omega^\bullet_{M_Q})_x$ is the filtered colimit of these groups, so it vanishes for $k\ge1$ and equals $\C$ for $k=0$, the constants. Hence $0\to\underline\C\to\Omega^0_{M_Q}\to\Omega^1_{M_Q}\to\cdots$ is exact on stalks.
\item [(2)]  $\Omega^0_{M_Q}$ is soft by \cite[Theorem~12.2.4]{Bernstein1994}. Each $\Omega^k_{M_Q}$ is a module over the soft sheaf of rings $\Omega^0_{M_Q}$, hence soft by Proposition~\ref{prop:soft-acyclic}(2).
\item[(3)] $W$ is paracompact and $\Omega^k_{M_Q}|_W$ is soft by (2), so $H^i(W,\Omega^k_{M_Q})=0$ for $i>0$ by Proposition~\ref{prop:soft-acyclic}(1).
\item[(4)] The sheaf $\Omega^p_{M_Q}\otimes_\C S$ is a module over the soft sheaf of rings $\Omega^0_{M_Q}$ via the first factor, so it is soft by Proposition~\ref{prop:soft-acyclic}(2), and acyclic on every open $W$ by Proposition~\ref{prop:soft-acyclic}(1), since $W$ is paracompact.
\item[(5)] Wedge products and $d$ commute with restriction to $W^N\subseteq W^{N+1}$, and limits of supercommutative DG-algebras are computed termwise, so $\Omega^\bullet_{M_Q}(W)$ is a supercommutative DG-algebra, and the restriction maps are algebra maps.\qedhere
\end{enumerate}
\end{proof}

We are ready to define the sheaf of $DG$-algebras over $\cF_{\preceq}$, the Alexandrov space associated to the face poset.

\begin{definition}\label{def:cA}
Let $\cF_{\preceq}$ be the Alexandrov space associated to face poset $(\cF(P^n), \preceq)$ and $b_Q\colon M_Q \to \cF_{\preceq}$ the projection map. Define the sheaf $\cA$ of DG-algebras over $\cF_{\preceq}$ as
\begin{equation}\label{eq:sheaf-over-poset}
    \cA:=b_{Q,*}\big(\Omega^{\bullet}_{M_Q}\big).
\end{equation}
For a face $F$ with minimal open neighbourhood $U_F$, the stalk $\cA_{F}$ is defined by
\begin{equation}
    \cA_{F}
    = \cA(U_F)
    = \Omega^{\bullet}_{M_Q}(b_Q^{-1} U_F)
    = \Omega^{\bullet}_{M_Q}(V_{F,Q}).
\end{equation}
\end{definition}

\begin{remark}
The functor $b_{Q,*}$ in \eqref{eq:sheaf-over-poset} need not be derived as $\Omega^{\bullet}_{M_Q}$ consists of soft sheaves and therefore is acyclic, see Lemma~\ref{lem:de-rham-properties}.
\end{remark}

\begin{remark}
The sheaf $\cA$ is a sheaf of graded-commutative DG-algebras on $\cF_\preceq$. Indeed, $\Omega^{\bullet}_{M_Q}$ is a sheaf of DG-algebras, and the push-forward functor $b_{Q,*}$ is defined section-wise, and it preserves the differential graded algebra structure. Hence $b_{Q,*}\bigl(\Omega^{\bullet}_{M_Q}\bigr)$ is naturally a sheaf of DG-algebras on $\cF_{\preceq}$.
\end{remark}

The stalks of $\cA$ record the equivariant cohomology of the strata, which is invisible to $\mathrm{Sh}_{Q^n}(M^{4n})$, see Corollary~\ref{prop:sheaves-on-orbits}.

\begin{proposition}\label{prop:stalks-of-cA}
For $F\in\cF(P^n)$ there is an isomorphism of graded algebras
\begin{equation*}
    H^{*}(\cA_F)
    \cong \C[x_1,\dots,x_k],
    \qquad k=\mathrm{codim}(F),\ \deg x_i=4.
\end{equation*}
In particular $H^{*}(\cA_F)\cong\C$, concentrated in degree $0$, exactly when $F=P^n$; at every proper face the stalk is a nontrivial polynomial ring.
\end{proposition}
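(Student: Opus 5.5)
We compute $H^*(\cA_F)$ by identifying it with the singular cohomology of $V_{F,Q}$ and then using the Borel deformation retraction. By Definition~\ref{def:cA}, $\cA_F=\Omega^\bullet_{M_Q}(V_{F,Q})$.

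\textbf{Step 1: de Rham.} By Lemma~\ref{lem:de-rham-properties}(1)--(3), $\Omega^\bullet_{M_Q}|_{V_{F,Q}}$ is a soft, hence acyclic, resolution of $\underline\C_{V_{F,Q}}$. The open set $V_{F,Q}$ is paracompact, being open in the paracompact space $M_Q$ built as a colimit of compact manifolds. The abstract de Rham theorem (Theorem~\ref{thm:abstract-de-rham}) then gives
\begin{equation*}
H^*(\cA_F)\cong H^*(V_{F,Q};\underline\C).
\end{equation*}
Since $V_{F,Q}$ is an increasing union of the open subsets $V_{F,Q}\cap M_Q^N$ of manifolds, it is locally contractible and paracompact. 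So sheaf cohomology agrees with singular cohomology $H^*(V_{F,Q};\C)$. Multiplicativity holds because the wedge product induces the cup product under the de Rham isomorphism; we check this on each finite stage $N$ and pass to the limit.

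\textbf{Step 2: homotopy invariance.} By Proposition~\ref{prop:borel-deformation-retract} and its corollary, $V_{F,Q}$ deformation retracts onto $EQ^n\times_{Q^n}O_f\cong EQ^n/\lambda^b(F)$. This space is a model for $B\lambda^b(F)\cong BQ^k$ with $k=\mathrm{codim}F$, by Lemma~\ref{lem:isotropy-of-strata}(2). Hence, as graded rings,
\begin{equation*}
H^*(V_{F,Q};\C)\cong H^*(BQ^k;\C)\cong H^*(\HP{\infty};\C)^{\otimes k}.
\end{equation*}
Here we use the Künneth theorem over a field, which applies since $H^*(\HP{\infty})$ is of finite type.

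\textbf{Step 3: computation.} We have $BQ\simeq\HP{\infty}$, because $S^{4N+3}\to\HP{N}$ is the Hopf bundle with fibre $Q$ and $S^\infty$ is contractible. Its cohomology is $H^*(\HP{\infty};\C)=\C[x]$ with $\deg x=4$. This follows from the CW structure with one cell in each degree divisible by $4$, together with the ring structure given by Poincaré duality on the finite stages $\HP{N}$. Tensoring $k$ copies gives $\C[x_1,\dots,x_k]$.

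For the final claim: if $k=0$, i.e.\ $F=P^n$, the result is $\C$ in degree $0$. For any proper face, $k\ge1$, so there is a nonzero class in degree $4$.

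\textbf{Main obstacle.} The delicate point is Step 1. The inverse-limit definition of $\Omega^\bullet_{M_Q}$ must actually compute the cohomology of $V_{F,Q}$. Taking $H^*$ of $\varprojlim_N\Omega^\bullet(V_{F,Q}\cap M_Q^N)$ could in principle produce a $\varprojlim^1$ term.

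I would handle this via the Milnor sequence. The restriction maps between the complexes $\Omega^\bullet(V_{F,Q}\cap M_Q^N)$ are surjective, since each stage is a closed submanifold and forms extend. The tower of cohomologies $H^*(BQ^k_{(N)})$ stabilises degreewise, using the finite models $E_NQ^n/\lambda^b(F)$ and the stage-wise retraction. This tower is Mittag-Leffler, so the $\varprojlim^1$ term vanishes. Alternatively, one can simply invoke the softness result \cite[Theorem~12.2.4]{Bernstein1994} already used in Lemma~\ref{lem:de-rham-properties}.
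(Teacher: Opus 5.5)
Your proposal is correct and follows the paper's proof step for step: the abstract de Rham theorem on $V_{F,Q}$ via Lemma~\ref{lem:de-rham-properties}, homotopy invariance through the Borel deformation retraction onto $EQ^n/\lambda^b(F)\simeq BQ^k$, and K\"unneth with $H^*(BQ;\C)=\C[x]$, $\deg x=4$. Your Milnor-sequence/Mittag-Leffler discussion is extra care that the paper does not give, since the paper takes Lemma~\ref{lem:de-rham-properties}(1) as given and that is where the $\varprojlim^1$ issue really sits; note that it is your stage-wise stabilisation argument, not softness, that would settle it, because the $\varprojlim^1$ question is about exactness of the limit complex rather than acyclicity.
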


\begin{proof}
By definition, $\cA_F=\Omega^\bullet_{M_Q}(V_{F,Q})$. The open set $V_{F,Q}$ is paracompact, and by Lemma~\ref{lem:de-rham-properties}, $\Omega^\bullet_{M_Q}|_{V_{F,Q}}$ is a soft resolution of $\underline\C_{V_{F,Q}}$. By the abstract de Rham theorem (Theorem~\ref{thm:abstract-de-rham}),
\begin{equation*}
    H^*(\cA_F)=H^*\big(\Omega^\bullet_{M_Q}(V_{F,Q})\big)\cong H^*(V_{F,Q};\C).
\end{equation*}
By Proposition~\ref{prop:borel-deformation-retract}, $V_{F,Q}\simeq B\lambda^b(F)$, and cohomology with constant coefficients is homotopy invariant on paracompact spaces \cite[II.11.12]{bredon_sheaf_1997}. Hence
\begin{equation*}
    H^*(\cA_F)\cong H^*(B\lambda^b(F);\C).
\end{equation*}
Now $\lambda^b(F)\cong Q^k$ with $k=\mathrm{codim}F$, and $H^*(BQ;\C)=H^*(B\mathrm{SU}(2);\C)=\C[c_2]$ with $\deg c_2=4$. The Künneth formula gives
\begin{equation*}
    H^*(BQ^k;\C)\cong\C[c_2]^{\otimes k}=\C[x_1,\dots,x_k],\qquad \deg x_i=4.
\end{equation*}
The cup product on $H^*(V_{F,Q};\C)$ corresponds to the wedge product on $\Omega^\bullet_{M_Q}(V_{F,Q})$, since the de Rham isomorphism is multiplicative, so this is an isomorphism of graded algebras.

For $F=P^n$ we have $k=0$, so $H^*(\cA_F)=\C$ in degree $0$. For a proper face $k\ge1$, and $\C[x_1,\dots,x_k]$ is nonzero in every degree divisible by $4$, hence a nontrivial polynomial ring.
\end{proof}
\subsection{Derived category of constructible DG-modules}
Let $\cA$ be the sheaf of DG-algebras from Section~\ref{sec:quoric-dg-algebra}. We write $\cA\text{-mod}$ for the abelian category of DG-$\cA$-modules and $D_\cA:=D(\cA)$ for its derived category, as recalled in Section~\ref{sec:derived-cats}. The sheaf $\cA$ lives on the orbit space $M^{4n}/Q^n$, which is identified with the simple polytope $P^n$ by Definition~\ref{def:quoric}. We regard $\cA$ as a sheaf on the finite space $\cF_\preceq$, namely the face poset $(\cF(P^n),\preceq)$ with the Alexandrov topology, see Section~\ref{sec:finite-spaces}. For $U\subseteq\cF_\preceq$ open, let $j_U\colon U\hookrightarrow\cF_\preceq$ be the inclusion map and define
\begin{itemize}
\item The \emph{restriction} of a sheaf $\mathcal G$ on $\cF_\preceq$ to $U$ is the sheaf $\mathcal G|_U=j_U^{-1}\mathcal G$ on $U$ given by $\mathcal G|_U(V)=\mathcal G(V)$ for every open $V\subseteq U$. Its stalks are $(\mathcal G|_U)_F=\mathcal G_F$ for $F\in U$. Restriction is applied to sheaves of DG-algebras and DG-modules termwise.
\item The \emph{extension by zero} of a sheaf $\mathcal H$ on $U$ is the sheaf $(j_U)_!\mathcal H$ on $\cF_\preceq$ obtained by sheafifying the presheaf $V\mapsto\mathcal H(V)$ if $V\subseteq U$ and $V\mapsto 0$ otherwise. Its stalks are
\begin{equation*}
\big((j_U)_!\mathcal H\big)_F=
\begin{cases}\mathcal H_F & F\in U,\\ 0 & F\notin U.\end{cases}
\end{equation*}
\end{itemize}
Since $\cF_\preceq$ is finite, a sheaf on it is the same as a functor on the poset $\cF(P^n)$, with $\mathcal G_F=\mathcal G(U_F)$, so both operations can also be read off from stalks and restriction maps. The functor $(j_U)_!$ is exact. We put
\begin{equation*}
\cA_U:=(j_U)_!\,(\cA|_U),\qquad \cA_F:=\cA_{U_F}\quad (F\in\cF(P^n)).
\end{equation*}
Thus $\cA_U$ is a sheaf of DG-$\cA$-modules on all of $\cF_\preceq$ with stalks $\cA_G$ for $G\in U$ and $0$ for $G\notin U$.

Following \cite[(1.6.2)]{Lunts1995}, we have the following definition of the constructible subcategory.
\begin{definition}\label{def:const-subcat-D_A}
The \emph{constructible subcategory} $D_{\cA,c}\subseteq D_\cA$ is the full triangulated subcategory generated by the objects $\cA_U$, $U\subseteq\cF_\preceq$ open. 
\end{definition}
 
That is, it is the smallest full subcategory containing them that is closed under shifts, cones and isomorphisms.

Next, we define the functors between $D^{+}(M_Q)$, the bounded-below derived category of sheaves on $M_Q$, and $D_\cA$, the unbounded derived category of $\cA$-modules.

\begin{definition}\label{def:global-sections-functor}
Let $M^{4n}$ be a quoric manifold and $M_Q$ its homotopy quotient.
\begin{itemize}
\item[(1)] The \emph{global sections functor} $\Gamma\colon D^{+}(M_Q)\to D_{\cA}$ maps $\cG^{\bullet} \in D^{+}(M_Q)$ to
\begin{equation*}
\Gamma(\cG^{\bullet}):=b_{Q,*}\big(\Omega^{\bullet}_{M_Q}\otimes_{\C}\cG^{\bullet}\big).
\end{equation*}
\item[(2)] The \emph{localisation functor} $L\colon D_{\cA}\to D(M_Q)$ maps $\mathcal N\in D_\cA$ to
\begin{equation*}
L(\mathcal N):=\Omega^{\bullet}_{M_Q}\otimes_{b_Q^{-1}\cA}b_Q^{-1}\mathcal N',
\end{equation*}
where $\mathcal N'\to\mathcal N$ is a $\mathcal K$-flat resolution, as in \cite[1.7.4]{Lunts1995} and \cite[10.12.2.5, 10.12.2.6, 10.12.4.3]{Bernstein1994}.
\end{itemize}
\end{definition}

Note that $\Gamma(\cG^{\bullet})$ is a well-defined $\cA$-module. Indeed, $\Omega^{\bullet}_{M_Q} \otimes_{\C} \cG^{\bullet}$ is a complex of soft sheaves (Lemma~\ref{lem:de-rham-properties}), it is an $\Omega^{\bullet}_{M_Q}$-module by extension of scalars from $\cG^\bullet$, and there is a quasi-isomorphism $\Omega^{\bullet}_{M_Q} \otimes_{\C} \cG^{\bullet}\simeq\cG^{\bullet}$. 

The localisation functor $L$ is also well defined as  the existence of the $\mathcal K$-flat resolution $\mathcal N'$ -- a variant of the bar resolution -- means the underived tensor product $\Omega^\bullet_{M_Q}\otimes_{b_Q^{-1}\cA}b_Q^{-1}\mathcal N'$ computes the derived tensor product, independently of the choice of resolution up to quasi-isomorphism. 

When $\Gamma$ and $L$ are restricted to the constructible subcategories consist quasi-inverses to each other. In particular, we have the following.

\begin{theorem}\label{thm:main-theorem}
Let $M^{4n}$ be a quoric manifold. Then the functors $\Gamma\colon D^{+}(M^{4n})\to D_{\cA}$ and $L\colon D_{\cA}\to D^{+}(M^{4n})$
\begin{enumerate}
\item[(1)] preserve the subcategories $D^{b}_{Q^n,c}(M^{4n})$ and $D_{\cA,c}$ of constructible objects;
\item[(2)] induce mutually quasi-inverse equivalences
\begin{equation*}
D^{b}_{Q^n,c}(M^{4n}) \xrightarrow{\ \sim\ } D_{\cA,c}.
\end{equation*}
\end{enumerate}
\end{theorem}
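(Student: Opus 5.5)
We follow the strategy of Lunts \cite{Lunts1995} outlined in the introduction and reduce everything to two finite families of generators. On the DG side, $D_{\cA,c}$ is by Definition~\ref{def:const-subcat-D_A} generated by the $\cA_U$, $U\subseteq\cF_\preceq$ open. Every open $U$ is a finite union of minimal opens $U_F$, and their intersections are again open. So the Mayer--Vietoris short exact sequences $0\to\cA_{U\cap U'}\to\cA_U\oplus\cA_{U'}\to\cA_{U\cup U'}\to0$, together with induction on $|U|$, show that the $\cA_{U_F}$ already generate. On the sheaf side, let $j_F\colon V_{F,Q}\hookrightarrow M_Q$ be the open star. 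We first show that $\{j_{F!}\underline\C_{V_{F,Q}}\}_F$ generates $D^b_{Q^n,c}(M^{4n})$, viewed inside $D^b(M_Q)$. Each $j_{F!}\underline\C_{V_{F,Q}}$ is constant on every stratum $EQ^n\times_{Q^n}M_G$ (rank $1$ or $0$), so it is constructible. For generation, we use Proposition~\ref{prop:simples}(3) and induct on strata, working from open to closed. An object supported on a closed union of strata with open stratum $M_F$ is built, via the recollement triangles, from its restriction to $EQ^n\times_{Q^n}M_F$ and objects on smaller strata. Proposition~\ref{prop:loc-on-face-strata} shows such a restriction has constant cohomology sheaves. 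We therefore expect it to be an iterated cone of shifts of $j_{F!}\underline\C$ restricted suitably, which can be compared with $j_{F!}\underline\C_{V_{F,Q}}$ modulo deeper strata.

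Next we compute both functors on generators. Since $j_{F!}\underline\C_{V_{F,Q}}\otimes\Omega^\bullet$ is soft by Lemma~\ref{lem:de-rham-properties}(4), $\Gamma$ needs no derivation. At a face $G$ its stalk is the complex of forms on $V_{G,Q}$ that are compactly supported in the $V_{F,Q}$-direction. We want a natural map $\cA_{U_F}\to\Gamma(j_{F!}\underline\C_{V_{F,Q}})$ and show it is a stalkwise quasi-isomorphism, so that $\Gamma(j_{F!}\underline\C_{V_{F,Q}})\simeq\cA_{U_F}$. The key input is that for $G\succeq F$ the inclusion $V_{G,Q}\subseteq V_{F,Q}$ is handled by the deformation retractions of Proposition~\ref{prop:borel-deformation-retract}. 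These are compatible with closures of strata by part (3), which gives vanishing for $G\not\succeq F$ and the identity $H^*(V_{G,Q})$ otherwise. Conversely, $\cA_{U_F}=(j_{U_F})_!(\cA|_{U_F})$ is $\mathcal K$-flat, being a direct summand-type extension of the free module. Thus $L(\cA_{U_F})=\Omega^\bullet_{M_Q}\otimes_{b_Q^{-1}\cA}b_Q^{-1}\cA_{U_F}=j_{F!}\Omega^\bullet_{V_{F,Q}}\simeq j_{F!}\underline\C_{V_{F,Q}}$, using Lemma~\ref{lem:de-rham-properties}(1) and the fact that $b_Q^{-1}$ commutes with extension by zero.

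Then we build natural transformations $u\colon\mathrm{Id}\to\Gamma L$ and $c\colon L\Gamma\to\mathrm{Id}$ from the adjunction-type maps $b_Q^{-1}b_{Q*}\Omega^\bullet\otimes(-)\to\Omega^\bullet\otimes(-)$ and the unit of $b_Q^{-1}\dashv b_{Q*}$. Both functors are triangulated. Hence the full subcategories where $u$, respectively $c$, are isomorphisms are triangulated, and the previous paragraph shows they contain the generators. It follows that they are isomorphisms on all of $D_{\cA,c}$ and $D^b_{Q^n,c}(M^{4n})$. This gives (2). Statement (1) follows from the same argument: the subcategory of $D^b_{Q^n,c}$ sent into $D_{\cA,c}$ is triangulated and contains the generators, and symmetrically for $L$.

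The main obstacle is the generation statement on the sheaf side, together with the stalk computation of $\Gamma(j_{F!}\underline\C_{V_{F,Q}})$. Both require controlling compactly supported/relative cohomology of $V_{G,Q}$ relative to $V_{G,Q}\setminus V_{F,Q}$. Here we would first try the relative deformation retraction of Proposition~\ref{prop:borel-deformation-retract}(3) to identify this pair with $(B\lambda^b(G),\emptyset)$ or with a contractible-relative pair. The stratum-by-stratum inductions are then formal.
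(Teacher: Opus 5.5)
Your plan is the paper's proof. It has the same steps: Mayer--Vietoris generation of $D_{\cA,c}$ by the $\cA_{U_F}$, d\'evissage generation of the sheaf side by the $j_{F!}\underline\C_{V_{F,Q}}$, and stalkwise computation of $\Gamma$ and $L$ on these generators. The vanishing at $G\not\succeq F$ comes, as in the paper, from the closure-preserving Borel deformation retraction making $V_{G,Q}\setminus V_{F,Q}\hookrightarrow V_{G,Q}$ a homotopy equivalence. Invertibility of $u$ and $c$ then extends from generators by triangulated closure.

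Two points need tightening.

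\emph{Generation on the sheaf side.} $j_{F!}\underline\C_{V_{F,Q}}$ differs from the extension by zero of $\underline\C_{M_{F,Q}}$ by the extension by zero of $\underline\C$ on $b_Q^{-1}(U_F\setminus\{F\})$. That set lives on the higher-dimensional strata of the star, not the deeper ones. So you first need all such extensions by zero from open sets $b_Q^{-1}(U)$ in your generated category, which follows from the same Mayer--Vietoris induction.

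\emph{Unit and counit on generators.} An abstract isomorphism $\Gamma L(\cA_{U_F})\cong\cA_{U_F}$ does not show that $u_{\cA_{U_F}}$ itself is invertible. You must verify, as the paper does in a separate lemma, that $u$ and $c$ on the generators are built from your explicit quasi-isomorphisms $1\mapsto 1\otimes 1$.
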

It is worth mentioning here, that although the functors are defined on the level of unbounded derived categories, they are inverse to each other when restricted to the full subcategories of the constructible objects.

The proof requires an analysis of the families of generators of each constructible subcategory, which is carried out in Section~\ref{sec:generators}. The proof of the theorem is then given in Section~\ref{sec:proof-of-equiv}.

\subsection{Generators and the unit and counit}\label{sec:generators}

We first show that $\{\cA_{U_F}\}_F$ generates $D_{\cA,c}$ and that $\{j_{F!}\underline{\C}_{V_{F,Q}}\}_F$ generates $D^b_{Q^n,c}(M^{4n})$, where $V_{F,Q}=b_Q^{-1}(U_F)$ is the open star of $M_F$ in $M_Q$ and $j_F\colon V_{F,Q}\hookrightarrow M_Q$ is the open embedding. We then show that $\Gamma(j_{F!}\underline{\C}_{V_{F,Q}})\simeq\cA_{U_F}$ and $L(\cA_{U_F})\simeq j_{F!}\underline{\C}_{V_{F,Q}}$ for every $F$.

\begin{lemma}\label{lem:generation-by-minimal-opens}
For $F\in\cF(P^n)$ let $U_F$ be its minimal open neighbourhood in the Alexandrov topology on $\cF(P^n)$. Then $D_{\cA,c}$ is generated as a triangulated category by $\{\cA_{U_F}\}_{F\in\cF(P^n)}$, where $\cA_U$ denotes the extension by zero of $\cA|_U$.
\end{lemma}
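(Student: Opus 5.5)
Since $D_{\cA,c}$ is by Definition~\ref{def:const-subcat-D_A} the triangulated hull of all $\cA_U$ with $U\subseteq\cF_\preceq$ open, and each $\cA_{U_F}$ is among these generators, it suffices to show that every $\cA_U$ lies in the triangulated subcategory $\langle\cA_{U_F}\rangle_F$ generated by the minimal ones. I will argue by induction on the number of elements of the open set $U$, i.e. of the up-set $U\subseteq\cF(P^n)$. The case $U=\emptyset$ gives $\cA_U=0$, and a one-element open set is $\{P^n\}=U_{P^n}$.

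For the inductive step, let $U$ be a nonempty up-set that is not already of the form $U_F$. Choose a minimal element $F\in U$. Put $U'=U\setminus\{F\}$, which is again an up-set (removing a minimal element preserves upward closure), and note $U_F\subseteq U$ and $U_F\setminus\{F\}=U_F\cap U'$, both open. Then $U=U'\cup U_F$ is a cover by two opens with intersection $W:=U'\cap U_F$. I will use the Mayer--Vietoris short exact sequence of DG-$\cA$-modules
\begin{equation*}
0\to\cA_{W}\to\cA_{U'}\oplus\cA_{U_F}\to\cA_{U}\to0,
\end{equation*}
with maps induced by the natural inclusions $\cA_{V}\hookrightarrow\cA_{V'}$ for $V\subseteq V'$ open (coming from $(j_V)_!$ being left adjoint to restriction) and a sign on one component. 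Exactness is checked on stalks: at $G\notin U$ all terms vanish; at $G\in U$ the stalk of $\cA_V$ is $\cA_G$ if $G\in V$ and $0$ otherwise, so the sequence reduces to $0\to\cA_G^{\epsilon_W}\to\cA_G^{\epsilon_{U'}}\oplus\cA_G^{\epsilon_{U_F}}\to\cA_G\to0$ with $\epsilon$ indicating membership, which is exact by inclusion--exclusion since $G$ lies in at least one of $U',U_F$. A short exact sequence of DG-modules gives a distinguished triangle in $D_\cA$, so $\cA_U$ is a cone of $\cA_W\to\cA_{U'}\oplus\cA_{U_F}$.

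Since $|U'|<|U|$ and $|W|<|U|$, the induction hypothesis puts $\cA_{U'}$ and $\cA_W$ in $\langle\cA_{U_F}\rangle_F$, hence also $\cA_U$ (the subcategory is closed under direct sums being cones of zero maps, and under cones). The only point needing care, which I expect to be the main (mild) obstacle, is that $\cA_U\to\cA_{U'}$ type maps are genuine morphisms of DG-$\cA$-modules and that stalkwise exactness on the finite space suffices; both follow from the description of sheaves on $\cF_\preceq$ as functors on the poset with stalks $\mathcal G(U_G)$, and from the exactness of $(j_U)_!$ noted above. This completes the induction and the generation statement.
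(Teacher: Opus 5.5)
Your proof is correct and follows the paper's argument. Both proceed by induction on $|U|$: split off a minimal element $F$ of $U$, then use the Mayer--Vietoris sequence $0\to\cA_{U_F\cap U'}\to\cA_{U_F}\oplus\cA_{U'}\to\cA_U\to0$, checked on stalks, to exhibit $\cA_U$ as a cone. Your $U'=U\setminus\{F\}$ coincides with the paper's $U_2=\bigcup_{G\in U\setminus\{F\}}U_G$, because $F$ is minimal and $U$ is upward closed.
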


\begin{proof}
Let $\mathcal T$ be the triangulated subcategory generated by the $\cA_{U_F}$. We show $\cA_U\in\mathcal T$ for every open $U$ by induction on the number $|U|$ of points of $U$. If $U=\emptyset$, then $\cA_U=0$, and if $U=U_F$ for some $F$, there is nothing to prove. Otherwise, since $U=\bigcup_{G\in U}U_G$, we may choose a closed point $F$ of the finite space $U$, that is, $F\in U$ with $F\notin U_G$ for all $G\in U\setminus\{F\}$, and put $U_1=U_F$, $U_2=\bigcup_{G\in U\setminus\{F\}}U_G$. Then $U_1\cup U_2=U$, $U_1\subsetneq U$ as $U\neq U_F$, and $U_2\subsetneq U$ as $F\notin U_2$.

For an open set $V$, write $\cA_V=j_{V!}(\cA|_V)$ for the extension by zero of the restriction of $\cA$ to $V$, so that $(\cA_V)_G=\cA_G$ if $G\in V$ and $(\cA_V)_G=0$ otherwise. Consider the canonical sequence
\begin{equation*}
0\to\cA_{U_1\cap U_2}\to\cA_{U_1}\oplus\cA_{U_2}\to\cA_{U}\to0
\end{equation*}
built from the inclusions of extensions by zero. It is exact, which we check on stalks, using that $(\cA_V)_G=\cA_G$ if $G\in V$ and $0$ otherwise:
\begin{itemize}
\item for $G\in U_1\cap U_2$, the stalk sequence is $0\to\cA_G\to\cA_G\oplus\cA_G\to\cA_G\to0$;
\item for $G\in U_1\setminus U_2$, it is $0\to0\to\cA_G\to\cA_G\to0$, and for $G\in U_2\setminus U_1$ it is the same with $\cA_G$ in the second summand;
\item for $G\notin U=U_1\cup U_2$, all terms vanish.
\end{itemize}
Hence $\cA_U$ is the cone of $\cA_{U_1\cap U_2}\to\cA_{U_1}\oplus\cA_{U_2}$ in $D_\cA$. The sets $U_1,U_2,U_1\cap U_2$ are open with fewer points than $U$, so by induction $\cA_{U_1},\cA_{U_2},\cA_{U_1\cap U_2}\in\mathcal T$, and therefore $\cA_U\in\mathcal T$. Since $D_{\cA,c}$ is generated by the $\cA_U$ for $U$ open, Definition~\ref{def:const-subcat-D_A}, we conclude $\mathcal T=D_{\cA,c}$.
\end{proof}

For an open set $U\subseteq\cF_\preceq$ write $V_{U,Q}:=b_Q^{-1}(U)$ and let $j_U\colon V_{U,Q}\hookrightarrow M_Q$ be the open embedding. Set $j_F:=j_{U_F}$, so that $V_{U_F,Q}=V_{F,Q}$.  We use the identification of $D^b_{Q^n,c}(M^{4n})$ with a full triangulated subcategory $\mathcal K\subseteq D^b(M_Q)$ from Section~\ref{sec:equivariant-derived-cat-quoric}. For $K\in\mathcal K$ and a face $F$ let $K|_F:=K|_{M_{F,Q}}$ and let 
\begin{equation*}
\mathrm{supp}(K):=\{F\in\cF(P^n): K|_F\neq 0\}.
\end{equation*} be the support.

\begin{lemma}\label{lem:generation-sheaf-side} $\{j_{F!}\underline\C_{V_{F,Q}}\}_{F\in\cF(P^n)}$ generates $D^b_{Q^n,c}(M^{4n})$ as a triangulated category.
\end{lemma}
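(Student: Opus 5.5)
The plan is to mirror Lemma~\ref{lem:generation-by-minimal-opens} on the sheaf side, working inside $\mathcal K\subseteq D^b(M_Q)$. There are two steps. First, show that the objects $j_{U!}\underline\C_{V_{U,Q}}$, for $U\subseteq\cF_\preceq$ open, all lie in the triangulated subcategory $\mathcal T$ generated by the $j_{F!}\underline\C_{V_{F,Q}}$. Second, show that every $K\in\mathcal K$ is built from objects of the form $j_{U!}\underline\C_{V_{U,Q}}$ and related objects, using induction on $|\mathrm{supp}(K)|$.

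\emph{Step 1.} Each $j_{U!}\underline\C_{V_{U,Q}}$ lies in $\mathcal K$, since its restriction to each stratum $M_{G,Q}$ is $\underline\C$ or $0$. For the induction on $|U|$, choose a closed point $F$ of $U$ and set $U_1=U_F$ and $U_2=\bigcup_{G\in U\setminus\{F\}}U_G$, exactly as before. The short exact sequence of sheaves
\[
0\to j_{U_1\cap U_2!}\underline\C\to j_{U_1!}\underline\C\oplus j_{U_2!}\underline\C\to j_{U!}\underline\C\to 0
\]
is checked on stalks over $M_Q$ in the same way as in Lemma~\ref{lem:generation-by-minimal-opens}. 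It yields a distinguished triangle, and the induction closes.

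\emph{Step 2.} Let $K\in\mathcal K$ with $K\neq0$. Choose a face $F\in\mathrm{supp}(K)$ that is maximal for $\preceq$ in $\mathrm{supp}(K)$. The stratum $M_{F,Q}:=EQ^n\times_{Q^n}M_F$ is open in $b_Q^{-1}(\mathrm{supp}\text{-closure})$. The key point is that $V_{F,Q}\cap b_Q^{-1}(\overline{\mathrm{supp}K})$ meets $\operatorname{supp}K$ only in $M_{F,Q}$, because the faces $G\succ F$ are not in $\mathrm{supp}(K)$. Hence $j_F^*K$ is supported on the closed subset $M_{F,Q}\subseteq V_{F,Q}$. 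Write $i\colon M_{F,Q}\hookrightarrow V_{F,Q}$; then $j_F^*K=i_*L$ for some $L$ whose cohomology sheaves are constant of finite rank on $M_{F,Q}\simeq B\lambda^b(F)$.

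Now use the recollement triangle $j_{F!}j_F^*K\to K\to i'_*i'^*K$ for the open set $V_{F,Q}$ and its closed complement. The third term has support $\mathrm{supp}(K)\setminus\{F\}$, so it lies in $\mathcal T$ by induction. It therefore remains to show $j_{F!}i_*L\in\mathcal T$. Truncation reduces this to a single constant sheaf $\underline\C_{M_{F,Q}}$. Extending it by zero, we note that $j_{F!}i_*\underline\C_{M_{F,Q}}$ fits in a triangle with $j_{F!}\underline\C_{V_{F,Q}}$ and $j_{W!}\underline\C_{V_{W,Q}}$, where $W=U_F\setminus\{F\}$ is open. Both of these lie in $\mathcal T$ by Step~1.

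\emph{Main obstacle.} The hard part is the truncation step on $M_{F,Q}$. Since $\pi_1(M_{F,Q})=0$ by Proposition~\ref{prop:loc-on-orbit-type-strata}, each cohomology sheaf of $L$ is a finite direct sum of copies of $\underline\C$. The difficulty is that the extensions between them are governed by $H^*(B\lambda^b(F))$, which is nonzero in positive degrees. So $L$ need not split, but the standard truncation triangles $\tau_{\le m}L\to L\to\tau_{>m}L$ still exhibit $L$ as a finite iterated cone of shifted $\underline\C^{r}$. This needs only the boundedness of $L$, which follows from the boundedness of $K$. The remaining care is to verify that the identification of $\mathcal K$ via $\overline F$ is compatible with $j_!$, $i_*$ and these truncations; for this I would cite Bernstein--Lunts for the fact that these functors preserve the equivariant subcategory.
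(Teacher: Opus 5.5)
Your proposal is correct and follows essentially the same route as the paper. You use the Mayer--Vietoris induction on $|U|$ to obtain every $j_{U!}\underline\C_{V_{U,Q}}$, then induct on the support via $j_{F!}j_F^*K\to K\to i'_*i'^*K$ for a maximal face $F$, truncate on the simply connected stratum $M_{F,Q}$, and finish with the open/closed triangle for $W=U_F\setminus\{F\}$. The only differences are cosmetic: you fold the paper's separate Step~2 (that $j_{F!}i_*\underline\C_{M_{F,Q}}\in\mathcal T$) into the support induction, and you write the remaining support as $\mathrm{supp}(K)\setminus\{F\}$ rather than $\mathrm{supp}(K)\setminus U_F$, which coincide by maximality of $F$.
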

\begin{proof}
Let $\mathcal T\subseteq D^b(M_Q)$ be the smallest full replete triangulated subcategory containing $j_{F!}\underline\C_{V_{F,Q}}$ for all $F\in\cF(P^n)$. Since $\mathcal K$ is a triangulated subcategory containing these generators, $\mathcal T\subseteq\mathcal K$. For the other inclusion we proceed in three steps. For an open set $U\subseteq\cF_\preceq$ we write
\begin{equation*}
E_U:=j_{U!}\underline\C_{V_{U,Q}},\qquad V_{U,Q}=b_Q^{-1}\bigl(U\bigr),
\end{equation*}
so that $E_{U_F}=j_{F!}\underline\C_{V_{F,Q}}$ is one of the generators of $\mathcal T$.

\medskip
\noindent\textbf{Step 1: $E_U\in\mathcal T$ for every open $U\subseteq\cF_\preceq$.}
We use induction on the number $|U|$ of points of $U$. If $U=\emptyset$ then $E_U=0$, and if $U=U_F$ for some $F$ then $E_U$ is a generator, so there is nothing to prove in these cases. Otherwise, choose $F\in U$ minimal with respect to inclusion, that is, such that $F\notin U_G$ for every $G\in U\setminus\{F\}$, and put
\begin{equation*}
U_1=U_F,\qquad U_2=\bigcup_{G\in U\setminus\{F\}}U_G.
\end{equation*}
Both are open, and $U_1\cup U_2=U$ because $U=\bigcup_{G\in U}U_G$. Moreover $U_1\subseteq U$, as $U$ is open and contains $F$, and $U_1\neq U$ by assumption, hence $U_1\subsetneq U$. Likewise $U_2\subseteq U$, and $F\in U\setminus U_2$, hence $U_2\subsetneq U$. Consequently $U_1\cap U_2$ is open and contained in $U_1\subsetneq U$.

Since $V_{U_1\cap U_2,Q}=V_{U_1,Q}\cap V_{U_2,Q}$ and $V_{U,Q}=V_{U_1,Q}\cup V_{U_2,Q}$, there is a sequence
\begin{equation*}
0\to E_{U_1\cap U_2}\to E_{U_1}\oplus E_{U_2}\to E_U\to 0
\end{equation*}
on $M_Q$, built from the inclusions of open sets. It is exact, which we check on stalks at a point $x\in M_Q$. Using that the stalk of $E_V$ at $x$ is $\C$ if $b_Q(x)\in V$ and $0$ otherwise, the stalk sequence is
\begin{itemize}
\item $0\to\C\to\C^2\to\C\to0$ if $b_Q(x)\in U_1\cap U_2$;
\item $0\to0\to\C\to\C\to0$ if $b_Q(x)$ lies in exactly one of $U_1,U_2$, with $\C$ in the first or second summand accordingly;
\item the zero sequence if $b_Q(x)\notin U$.
\end{itemize}
Hence the sequence is exact, and it gives a distinguished triangle in $D^b(M_Q)$. The sets $U_1$, $U_2$ and $U_1\cap U_2$ are open with fewer points than $U$, so by the induction hypothesis $E_{U_1}$, $E_{U_2}$ and $E_{U_1\cap U_2}$ lie in $\mathcal T$. Since $\mathcal T$ is closed under finite sums and cones, $E_U\in\mathcal T$.

\medskip
\noindent\textbf{Step 2: $j_{F!}i_*\underline\C_{M_{F,Q}}\in\mathcal T$ for every face $F$, where $i\colon M_{F,Q}\hookrightarrow V_{F,Q}$.}
Fix $F$ and put $U'=U_F\setminus\{F\}$. Because $F$ is the minimum of $U_F$, the set $U'$ is open. Then $V_{U',Q}=V_{F,Q}\setminus M_{F,Q}$, so $V_{U',Q}$ is open in $V_{F,Q}$ and $M_{F,Q}$ is its closed complement. Therefore $j_{F!}i_*\underline\C_{M_{F,Q}}$ is the constant sheaf $\underline\C$ on $M_{F,Q}$ extended by zero to $M_Q$, and the sequence
\begin{equation*}
0\to E_{U'}\to j_{F!}\underline\C_{V_{F,Q}}\to j_{F!}i_*\underline\C_{M_{F,Q}}\to 0
\end{equation*}
is the extension by zero of the standard sequence for an open subset and its closed complement. It is exact, as the stalks are $\C\to\C\to0$ on $V_{U',Q}$, $0\to\C\to\C$ on $M_{F,Q}$, and zero elsewhere. By Step 1 the term $E_{U'}$ lies in $\mathcal T$, and the middle term is a generator. Hence the third term lies in $\mathcal T$ as well.

\medskip
\noindent\textbf{Step 3: every $K\in\mathcal K$ lies in $\mathcal T$.}
For $K\in D^b(M_Q)$ let $\operatorname{supp}K$ be the set of faces $G$ with $K|_{M_{G,Q}}\neq0$. We use induction on $|\operatorname{supp}K|$. If $\operatorname{supp}K=\emptyset$ then $K=0$, because the strata $M_{G,Q}$ cover $M_Q$. Otherwise choose $F\in\operatorname{supp}K$ maximal with respect to inclusion, and put $j=j_F$, $V=V_{F,Q}$. The adjunction triangle
\begin{equation*}
j_!j^*K\to K\to K'\to
\end{equation*}
has its first two terms in $\mathcal K$, and hence so does $K'$.

We compute the support of $K'$ stratum by stratum. For $G\in U_F$ the stratum $M_{G,Q}$ lies in $V$, so $j_!j^*K|_{M_{G,Q}}\to K|_{M_{G,Q}}$ is an isomorphism and $K'|_{M_{G,Q}}=0$. For $G\notin U_F$ we have $j_!j^*K|_{M_{G,Q}}=0$, so $K'|_{M_{G,Q}}\cong K|_{M_{G,Q}}$. Hence
\begin{equation*}
\operatorname{supp}K'=\operatorname{supp}K\setminus U_F,
\end{equation*}
which is strictly smaller than $\operatorname{supp}K$ since $F\in U_F\cap\operatorname{supp}K$. By induction $K'\in\mathcal T$, so it suffices to show $j_!j^*K\in\mathcal T$.

Put $L=j^*K$. The open set $V\setminus M_{F,Q}$ is the union of the strata $M_{G,Q}$ with $G\supsetneq F$, and by maximality of $F$ no such $G$ lies in $\operatorname{supp}K$. Hence $L$ vanishes on $V\setminus M_{F,Q}$. For the open embedding $j'\colon V\setminus M_{F,Q}\hookrightarrow V$ the triangle
\begin{equation*}
j'_!j'^*L\to L\to i_*i^*L\to
\end{equation*}
has $j'^*L=0$, so $L\cong i_*i^*L$. Writing $L_F:=K|_{M_{F,Q}}=i^*L$, we obtain
\begin{equation*}
j_!j^*K\cong j_!i_*L_F.
\end{equation*}
Here $L_F\in D^b\bigl(M_{F,Q}\bigr)$ has bounded cohomology, and since $K\in\mathcal K$ and $M_{F,Q}$ is a single stratum, Lemma~\ref{lem:strata-simply-connected} and Proposition~\ref{prop:loc-on-face-strata} show that every cohomology sheaf $\mathcal H^m\left(L_F\right)\cong\underline\C^{\,r_m}_{M_{F,Q}}$ is constant of finite rank $r_m$.

For each $m$ there is a truncation triangle
\begin{equation*}
\tau_{<m}L_F\to\tau_{\le m}L_F\to\underline\C^{\,r_m}_{M_{F,Q}}[-m]\to.
\end{equation*}
Since only finitely many $\mathcal H^m\left(L_F\right)$ are nonzero, induction on their number expresses $L_F$ as an iterated cone of shifts of finite sums of $\underline\C_{M_{F,Q}}$. The functor $j_!i_*$ is triangulated and commutes with finite sums and shifts, so $j_!i_*L_F$ is an iterated cone of shifts of finite sums of $j_{F!}i_*\underline\C_{M_{F,Q}}$. These objects lie in $\mathcal T$ by Step 2, and $\mathcal T$ is closed under finite sums, shifts and cones. Hence $j_!j^*K\in\mathcal T$, and since $K'\in\mathcal T$ as well, the triangle above gives $K\in\mathcal T$. As a result $\mathcal K\subseteq\mathcal T$ and the proof is now complete.
\end{proof}

Now, having established that generating sets, we prove that the functors $\Gamma$ and $L$ exchange them.

\begin{proposition}\label{prop:gamma-on-generators}
For every $F\in\cF(P^n)$, the map
\begin{equation}\label{eq:qis-gamma-on-generators}
\cA_{U_F}\longrightarrow \Gamma\big(j_{F!}\underline\C_{V_{F,Q}}\big) = b_{Q,*}\big(\Omega^{\bullet}_{M_Q}\otimes_\C j_{F!}\underline\C_{V_{F,Q}}\big), \qquad 1\mapsto 1\otimes 1,
\end{equation}
is a quasi-isomorphism in $D_{\cA,c}$.
\end{proposition}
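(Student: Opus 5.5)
Since $\cF_\preceq$ is a finite space, a morphism of sheaves of DG-modules on it is a quasi-isomorphism exactly when it is one on every stalk, and the stalk at $G$ is the value on the minimal open set $U_G$. The plan is therefore to compute both sides of \eqref{eq:qis-gamma-on-generators} on each $U_G$ and to compare them. First we check that the map is a well-defined morphism of DG-$\cA$-modules. Over an open $U\subseteq U_F$, the sheaf $\Omega^\bullet_{M_Q}\otimes_\C j_{F!}\underline\C_{V_{F,Q}}$ restricts on $b_Q^{-1}(U)\subseteq V_{F,Q}$ to $\Omega^\bullet_{M_Q}$. The assignment $\omega\mapsto\omega\otimes1$ then identifies $\cA(U)$ with $\Gamma(j_{F!}\underline\C_{V_{F,Q}})(U)$. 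It commutes with $d$ and with the $\cA$-action because $1$ is a flat (locally constant) section. Since $\cA_{U_F}$ is the extension by zero, the map is determined by these identifications together with zero elsewhere.

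\emph{Case $G\in U_F$.} Here $U_G\subseteq U_F$, so $b_Q^{-1}(U_G)=V_{G,Q}\subseteq V_{F,Q}$. The sheaf $j_{F!}\underline\C_{V_{F,Q}}$ restricts to $\underline\C$ on $V_{G,Q}$, and both stalks equal $\Omega^\bullet_{M_Q}(V_{G,Q})$. The map between them is the identity, hence a quasi-isomorphism (in fact an isomorphism of complexes).

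\emph{Case $G\notin U_F$.} The stalk of $\cA_{U_F}$ is $0$. The stalk of the right-hand side is $\Gamma\bigl(V_{G,Q},\Omega^\bullet_{M_Q}\otimes j_{F!}\underline\C_{V_{F,Q}}\bigr)$. By Lemma~\ref{lem:de-rham-properties}(4) this is a complex of soft sheaves on the paracompact open set $V_{G,Q}$. It resolves $j_{F!}\underline\C_{V_{F,Q}}|_{V_{G,Q}}$, since tensoring over $\C$ with a resolution of $\underline\C$ is exact. The abstract de Rham theorem (Theorem~\ref{thm:abstract-de-rham}) then identifies the cohomology of this stalk with
\begin{equation*}
H^*\bigl(V_{G,Q};\,(j_{!}\underline\C_{W})\bigr),\qquad W:=V_{F,Q}\cap V_{G,Q}=b_Q^{-1}(U_F\cap U_G),
\end{equation*}
where $j\colon W\hookrightarrow V_{G,Q}$ is the open inclusion. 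This is the relative cohomology $H^*(V_{G,Q},V_{G,Q}\setminus W)$, i.e.\ cohomology of $V_{G,Q}$ with compact-type support in the direction of the complement of $W$. We must show it vanishes.

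The main obstacle is this vanishing, and for it we use the Borel deformation retraction (Proposition~\ref{prop:borel-deformation-retract}) for the face $G$. The retraction $H_Q$ preserves the closure in $V_{G,Q}$ of each piece $EQ^n\times_{Q^n}M_{G'}$. The complement $Z:=V_{G,Q}\setminus W$ is the union of the strata $M_{G',Q}$ with $G'\supseteq G$ and $G'\notin U_F$. Since $U_F$ is an up-set, $Z$ is closed, and it contains the core $EQ^n\times_{Q^n}O_g$ because $G\notin U_F$. We then show that $H_Q$ restricts to a deformation retraction of $Z$ onto that core. For $t>0$, the retraction $h$ keeps each point in the relative interior of its own face, so $H_{Q,t}$ preserves every stratum and hence $Z$. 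At $t=0$ everything lands in the core, which lies in $Z$. Thus $Z\hookrightarrow V_{G,Q}$ is a homotopy equivalence of paracompact spaces. Homotopy invariance \cite[II.11.12]{bredon_sheaf_1997} and the long exact sequence
\begin{equation*}
\cdots\to H^*(V_{G,Q},Z)\to H^*(V_{G,Q})\to H^*(Z)\to\cdots
\end{equation*}
then give $H^*(V_{G,Q};j_!\underline\C_W)=0$. So in this case both stalks are acyclic, and the map is a quasi-isomorphism on all stalks. Finally, $\cA_{U_F}$ lies in $D_{\cA,c}$ by Definition~\ref{def:const-subcat-D_A}, so the quasi-isomorphism takes place in $D_{\cA,c}$.
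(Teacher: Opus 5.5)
Your proposal is correct and follows the paper's proof essentially step by step. Both check stalks on the minimal opens, where the map is the identity over $U_F$. For $G\notin U_F$, both use the soft resolution and the abstract de Rham theorem to reduce to $H^*(V_{G,Q};j_!\underline\C_W)$, and then kill it with the long exact sequence and the fact that the Borel deformation retraction for $G$ restricts to the closed complement $Z$.

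The only variation is how you show $H_{Q,t}(Z)\subseteq Z$. You use that $h(t,\cdot)$ keeps each point in the relative interior of its face for $t>0$, a fact established in the proof of Lemma~\ref{lemma:deformation-retract}. The paper instead uses closure-preservation from Proposition~\ref{prop:borel-deformation-retract}(3), together with the observation that the closure of a stratum lying outside $U_F$ stays outside $U_F$. Both arguments work.
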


\begin{proof}
It suffices to check \eqref{eq:qis-gamma-on-generators} on stalks at each $F'\in\cF(P^n)$.

\emph{Case $F'\in U_F$.} Let $\iota\colon U_F\hookrightarrow\cF_\preceq$ be the inclusion. On the left, $\cA_{U_F}|_{U_F}=\cA|_{U_F}$ by definition of extension by zero. On the right, since $V_{F,Q}=b_Q^{-1}(U_F)$, restricting $j_{F!}\underline\C_{V_{F,Q}}$ to $b_Q^{-1}(U_F)$ gives $\underline\C_{V_{F,Q}}$, so
\begin{equation*}
b_{Q,*}\big(\Omega^\bullet_{M_Q}\otimes_\C j_{F!}\underline\C_{V_{F,Q}}\big)\Big|_{U_F}
=\iota^{-1}b_{Q,*}\big(\Omega^\bullet_{M_Q}\otimes_\C\underline\C_{V_{F,Q}}\big)
=\iota^{-1}b_{Q,*}\big(\Omega^\bullet_{M_Q}\big)
=\cA|_{U_F},
\end{equation*}
using $\Omega^\bullet_{M_Q}\otimes_\C\underline\C_{V_{F,Q}}=\Omega^\bullet_{M_Q}|_{V_{F,Q}}$. The two sides agree.

\emph{Case $F'\notin U_F$.} Then $(\cA_{U_F})_{F'}=0$. It remains to show that
$b_{Q,*}\big(\Omega^{\bullet}_{M_Q}\otimes_\C j_{F!}\underline\C_{V_{F,Q}}\big)_{F'}$ is acyclic. Since $\cF_{\preceq}$ is a finite space, the stalk at $F'$ is the value on the minimal open neighborhood $U_{F'}$, and $b_Q^{-1}(U_{F'})=V_{F',Q}$. Using the softness of $\Omega^\bullet_{M_Q}\otimes_\C j_{F!}\underline\C_{V_{F,Q}}$ (it is an $\Omega^0_{M_Q}$-module), its sections compute hypercohomology on $V_{F',Q}$, and it is quasi-isomorphic to $j_{F!}\underline\C_{V_{F,Q}}$ by the Poincar\'e lemma. Hence
\begin{align*}
b_{Q,*}\left(\Omega^{\bullet}_{M_Q}\otimes_\C j_{F!}\underline\C_{V_{F,Q}}\right)_{F'}
&=\Gamma\left(V_{F',Q},\,\Omega^{\bullet}_{M_Q}\otimes_\C j_{F!}\underline\C_{V_{F,Q}}\right)\\
&\simeq R\Gamma\left(V_{F',Q},\, j_{F!}\underline\C_{V_{F,Q}}\right).
\end{align*}

Put $W:=V_{F',Q}$, $O:=W\cap V_{F,Q}$, which is open in $W$, and $Z:=W\setminus O$, which is closed. Then $(j_{F!}\underline\C_{V_{F,Q}})|_W=(j_O)_!\underline\C_O$, and the exact sequence
\begin{equation*}
0\to(j_O)_!\underline\C_O\to\underline\C_W\to\underline\C_Z\to 0
\end{equation*}
gives a long exact sequence
\begin{equation*}
\cdots\to H^i\big(W,(j_O)_!\underline\C_O\big)\to H^i(W;\C)\xrightarrow{\ \mathrm{res}\ }H^i(Z;\C)\to H^{i+1}\big(W,(j_O)_!\underline\C_O\big)\to\cdots.
\end{equation*}
It therefore suffices to show that $\mathrm{res}$ is an isomorphism. Since $W$ and $Z$ are paracompact and cohomology with constant coefficients is homotopy invariant \cite[II.11.12]{bredon_sheaf_1997}, it is enough that $Z\hookrightarrow W$ is a homotopy equivalence.

The set $Z$ is the union of the strata $EQ^n\times_{Q^n}M_G$ with $G\supseteq F'$ and $G\not\supseteq F$. In particular $M_{F',Q}\subseteq Z$, because $F'\not\supseteq F$. Let $H_Q$ be the homotopy of Proposition~\ref{prop:borel-deformation-retract} for $F'$, and fix $f'\in\mathring F'$. For such $G$, each $H_{Q,t}$ preserves the closure of $EQ^n\times_{Q^n}M_G$ in $W$, which is the union of the strata $EQ^n\times_{Q^n}M_{G''}$ with $F'\subseteq G''\subseteq G$. Since $G\not\supseteq F$ forces $G''\not\supseteq F$, this closure lies in $Z$. Hence $H_Q$ restricts to a homotopy of $Z$ that ends in $EQ^n\times_{Q^n}O_{f'}\subseteq M_{F',Q}\subseteq Z$ and fixes it pointwise. So both $W$ and $Z$ deformation retract onto $EQ^n\times_{Q^n}O_{f'}$, and $Z\hookrightarrow W$ is a homotopy equivalence. Thus $H^\ast(W,(j_O)_!\underline\C_O)=0$, and both stalks agree.
\end{proof}

\begin{proposition}\label{prop:L-on-generators}
For every $F\in\cF(P^n)$, the map
\begin{equation}\label{eq:qis-L-on-generators}
j_{F!}\underline\C_{V_{F,Q}}\longrightarrow L(\cA_{U_F})=\Omega^{\bullet}_{M_Q}\otimes_{b_Q^{-1}\cA}b_Q^{-1}\cA_{U_F}, \qquad s\mapsto 1\otimes s,
\end{equation}
is a quasi-isomorphism in $D(M_Q)$, hence in $D^{b}_{Q^n,c}(M^{4n})$.
\end{proposition}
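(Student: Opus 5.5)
The plan is to check the map \eqref{eq:qis-L-on-generators} on stalks at points $x\in M_Q$, using that $b_Q^{-1}$ is exact and commutes with extension by zero. First I would compute the inverse image of the generator. Because $b_Q^{-1}(U_F)=V_{F,Q}$, we have $b_Q^{-1}\cA_{U_F}=b_Q^{-1}(j_{U_F})_!(\cA|_{U_F})\cong j_{F!}\bigl(b_Q^{-1}\cA\bigr)|_{V_{F,Q}}$. On the stalk side this reads $(b_Q^{-1}\cA_{U_F})_x=\cA_{b_Q(x)}$ if $b_Q(x)\in U_F$, and $0$ otherwise.

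The second step deals with the $\mathcal K$-flat resolution in the definition of $L$. $\cA_{U_F}=(j_{U_F})_!(\cA|_{U_F})$ is the extension by zero of a free rank-one module. Its stalks are therefore free $\cA_G$-modules for $G\in U_F$ and zero elsewhere, and $\Hom_{\cA}(\cA_{U_F},\mathcal N)=\mathcal N(U_F)$. Hence $\cA_{U_F}$ is $\mathcal K$-projective, in particular $\mathcal K$-flat, exactly as for Lunts' generators \cite[1.7.4]{Lunts1995}. So I would take $\mathcal N'=\cA_{U_F}$ itself, and then $L(\cA_{U_F})$ is the underived tensor product. Should this require more justification, I would instead compare with any $\mathcal K$-flat resolution via the independence-of-resolution statement recalled after Definition~\ref{def:global-sections-functor}.

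With $\mathcal N'=\cA_{U_F}$, tensor product commutes with extension by zero along the open embedding $j_F$:
\begin{equation*}
\Omega^\bullet_{M_Q}\otimes_{b_Q^{-1}\cA}j_{F!}\bigl(b_Q^{-1}\cA|_{V_{F,Q}}\bigr)\cong j_{F!}\bigl(\Omega^\bullet_{M_Q}|_{V_{F,Q}}\otimes_{b_Q^{-1}\cA|_{V_{F,Q}}}b_Q^{-1}\cA|_{V_{F,Q}}\bigr)=j_{F!}\Omega^\bullet_{V_{F,Q}}.
\end{equation*}
This can be checked on stalks: at $x\notin V_{F,Q}$ both sides vanish, and at $x\in V_{F,Q}$ both sides are $\Omega^\bullet_{M_Q,x}\otimes_{\cA_{b_Q(x)}}\cA_{b_Q(x)}=\Omega^\bullet_{M_Q,x}$. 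Under this identification the map $s\mapsto 1\otimes s$ becomes $j_{F!}$ applied to the coaugmentation $\underline\C_{V_{F,Q}}\to\Omega^\bullet_{V_{F,Q}}$. That map is a quasi-isomorphism by Lemma~\ref{lem:de-rham-properties}(1). Since $j_{F!}$ is exact, it preserves quasi-isomorphisms, so \eqref{eq:qis-L-on-generators} is a quasi-isomorphism in $D(M_Q)$.

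Finally, $j_{F!}\underline\C_{V_{F,Q}}$ lies in $\mathcal K\simeq D^b_{Q^n,c}(M^{4n})$, as noted in Lemma~\ref{lem:generation-sheaf-side}, so the isomorphism also holds there. I expect the main obstacle to be the flatness step: justifying that $\cA_{U_F}$ may serve as its own $\mathcal K$-flat resolution in this unbounded DG setting. The approach is to show that $\Hom(\cA_{U_F},-)$ is the exact functor of sections over $U_F$, so that $\cA_{U_F}$ is $\mathcal K$-projective, and then to invoke \cite[10.12.2]{Bernstein1994}.
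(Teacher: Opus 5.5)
Your proposal is correct and follows the paper's proof. Both arguments check the map stalkwise: at $x\in V_{F,Q}$ the target stalk is $\Omega^\bullet_{M_Q,x}\otimes_{\cA_{b_Q(x)}}\cA_{b_Q(x)}\simeq\Omega^\bullet_{M_Q,x}\simeq\C$ by Lemma~\ref{lem:de-rham-properties}(1), and at $x\notin V_{F,Q}$ both stalks vanish.

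Your two additions are sound refinements. You rewrite the map as $j_{F!}$ of the coaugmentation $\underline\C_{V_{F,Q}}\to\Omega^\bullet_{V_{F,Q}}$, and you justify using $\cA_{U_F}$ as its own resolution, which the paper only invokes later as (R1). Your identity $\mathcal M\otimes_{b_Q^{-1}\cA}b_Q^{-1}\cA_{U_F}\cong j_{F!}(\mathcal M|_{V_{F,Q}})$ already shows directly that tensoring with $b_Q^{-1}\cA_{U_F}$ is exact. That is precisely the $\mathcal K$-flatness the definition of $L$ requires, so you need not pass through $\mathcal K$-projectivity.
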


\begin{proof}
The map is well defined: both sheaves are supported on $V_{F,Q}=b_Q^{-1}(U_F)$, and there $b_Q^{-1}\cA_{U_F}=b_Q^{-1}(\cA|_{U_F})$ contains the constants $\underline\C$. We check stalks at $x\in V_{F,Q}$ and $x\notin V_{F,Q}$ separately.

\emph{Case $x\in V_{F,Q}$.} Here $(j_{F!}\underline\C_{V_{F,Q}})_x=\C$, while
\begin{equation*}
\big(\Omega^\bullet_{M_Q}\otimes_{b_Q^{-1}\cA}b_Q^{-1}\cA_{U_F}\big)_x
\simeq \Omega^{\bullet}_{M_Q,x}\otimes_{\cA_{b_Q(x)}}(\cA_{U_F})_{b_Q(x)}
=\Omega^{\bullet}_{M_Q,x}\otimes_{\cA_{b_Q(x)}}\cA_{b_Q(x)}
\simeq\Omega^{\bullet}_{M_Q,x}\simeq\C,
\end{equation*}
using $b_Q(x)\in U_F$ (so $(\cA_{U_F})_{b_Q(x)}=\cA_{b_Q(x)}$) and the Poincar\'e lemma (Lemma~\ref{lem:de-rham-properties}(1)).

\emph{Case $x\notin V_{F,Q}$.} Then $(j_{F!}\underline\C_{V_{F,Q}})_x=0$; also $b_Q(x)\notin U_F$, so $(\cA_{U_F})_{b_Q(x)}=0$ and hence $(b_Q^{-1}\cA_{U_F})_x=0$, so the right-hand stalk vanishes too.
\end{proof}

\subsection{Proof of Theorem~\ref{thm:main-theorem}} \label{sec:proof-of-equiv}

We now combine the preceding results to prove Theorem~\ref{thm:main-theorem}. Throughout, we identify $D^b_{Q^n,c}(M^{4n})$ with the full triangulated subcategory $\mathcal K\subseteq D^b(M_Q)$. We also use the following standard facts about DG-modules over the sheaf of DG-algebras $\cA$ on the finite space $\cF_\preceq$. They are established in \cite[Section~1]{Lunts1995}, see also \cite[Section~10.12]{Bernstein1994}, and we do not reprove them here.

\begin{enumerate}
\item[(R1)] The modules $\cA_{U_F}$ are K-projective, and $b_Q^{-1}\cA_{U_F}$ is K-flat over $b_Q^{-1}\cA$.
\item[(R2)] Every DG-$\cA$-module $\cN$ has a functorial resolution $r_{\cN}\colon R(\cN)\to\cN$ by a K-projective DG-$\cA$-module, and $b_Q^{-1}R(\cN)$ is K-flat over $b_Q^{-1}\cA$.
\item[(R3)] For every DG-$\Omega^\bullet_{M_Q}$-module $\cM$, the action map $\theta_{\cM}\colon\Omega^\bullet_{M_Q}\otimes_\C\cM\to\cM$ is a quasi-isomorphism. If $\cM$ is bounded below, $b_{Q,*}(\theta_{\cM})$ is also a quasi-isomorphism.
\end{enumerate}

Setting $L(\cN):=\Omega^\bullet_{M_Q}\otimes_{b_Q^{-1}\cA}b_Q^{-1}R(\cN)$, we have $\Gamma L(\cN)=b_{Q,*}\big(\Omega^\bullet_{M_Q}\otimes_\C L(\cN)\big)$. The unit and counit of the adjunction are defined as follows.

For a DG-$\cA$-module $\cN$ let
\begin{equation*}
\eta_{\cN}\colon R(\cN)\to b_{Q,*}L(\cN),\qquad n\mapsto 1\otimes n,
\end{equation*}
and using (R3), define the \emph{unit}
\begin{equation*}
u_{\cN}:=b_{Q,*}(\theta_{L(\cN)})^{-1}\circ\eta_{\cN}\circ r_{\cN}^{-1}\colon\ \cN\longrightarrow\Gamma L(\cN).
\end{equation*} in $D_{\cA}$.

For $E\in D^+(M_Q)$ let $r\colon R\to\Gamma(E)$ be a projective resolution of $\Gamma(E)=b_{Q,*}\big(\Omega^\bullet_{M_Q}\otimes_\C E\big)$ as a DG-$\cA$-module, and let the \emph{counit} $c_E\colon L\Gamma(E)\to E$ be the composite
\begin{equation*}
\Omega^\bullet_{M_Q}\otimes_{b_Q^{-1}\cA}b_Q^{-1}R\xrightarrow{\ \mathrm{id}\otimes b_Q^{-1}r\ }\Omega^\bullet_{M_Q}\otimes_{b_Q^{-1}\cA}b_Q^{-1}b_{Q,*}\big(\Omega^\bullet_{M_Q}\otimes_\C E\big)\xrightarrow{\ m\ }\Omega^\bullet_{M_Q}\otimes_\C E\xleftarrow[\ \sim\ ]{\ e\mapsto1\otimes e\ }E,
\end{equation*}
where $m(\omega_1\otimes\omega_2\otimes e)=\omega_1\omega_2\otimes e$. As in \cite[Section~1]{Lunts1995}, $c_E$ does not depend on the choice of projective resolution up to homotopy, and $u$ and $c$ are natural transformations of triangulated functors.

\begin{lemma}\label{lem:u-c-on-generators}
For every $F\in\cF(P^n)$, put $\cG_F:=j_{F!}\underline{\C}_{V_{F,Q}}$. Then $u_{\cA_{U_F}}$ is an isomorphism in $D_\cA$ and $c_{\cG_F}$ is an isomorphism in $D^+(M_Q)$.
\end{lemma}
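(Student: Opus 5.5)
The plan is to reduce both claims to Propositions~\ref{prop:gamma-on-generators} and~\ref{prop:L-on-generators}, with (R1)--(R3) used to remove the resolutions from the definitions of $u$ and $c$.

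\emph{The unit.} By (R1), $\cA_{U_F}$ is already K-projective, so we take $R(\cA_{U_F})=\cA_{U_F}$ and $r=\mathrm{id}$. Then $L(\cA_{U_F})=\Omega^\bullet_{M_Q}\otimes_{b_Q^{-1}\cA}b_Q^{-1}\cA_{U_F}$. Write $\phi\colon\cG_F\to L(\cA_{U_F})$ for the map $s\mapsto1\otimes s$, which is a quasi-isomorphism by Proposition~\ref{prop:L-on-generators}. Consider the composite
\begin{equation*}
\cA_{U_F}\xrightarrow{\ \eta\ }b_{Q,*}L(\cA_{U_F})\xleftarrow{\ b_{Q,*}\theta\ }b_{Q,*}\big(\Omega^\bullet_{M_Q}\otimes_\C L(\cA_{U_F})\big)=\Gamma L(\cA_{U_F}).
\end{equation*}
We compare it with $\Gamma(\phi)\circ\psi$, where $\psi\colon\cA_{U_F}\to\Gamma(\cG_F)$ is the quasi-isomorphism $1\mapsto1\otimes1$ of Proposition~\ref{prop:gamma-on-generators}. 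The map $\Gamma(\phi)$ is a quasi-isomorphism because $\Gamma$ sends quasi-isomorphisms of bounded-below complexes to quasi-isomorphisms. This holds since $\Omega^\bullet_{M_Q}\otimes_\C(-)$ is exact, it lands in soft sheaves (Lemma~\ref{lem:de-rham-properties}(4)), and $b_{Q,*}$ of soft bounded-below complexes computes $Rb_{Q,*}$.

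On sections, $b_{Q,*}\theta\circ\Gamma(\phi)\circ\psi$ sends $a\mapsto a\otimes1\otimes1\mapsto a\cdot(1\otimes1)=1\otimes a=\eta(a)$. So we get the identity $\eta=b_{Q,*}(\theta)\circ\Gamma(\phi)\circ\psi$. Substituting it into the definition of $u$ gives $u_{\cA_{U_F}}=\Gamma(\phi)\circ\psi$, which is an isomorphism in $D_\cA$. We need $L(\cA_{U_F})$ bounded below to invoke (R3) for $b_{Q,*}\theta$. This follows from Proposition~\ref{prop:L-on-generators}, or directly from the fact that $\cA$ and $\Omega^\bullet$ are nonnegatively graded.

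\emph{The counit.} Take $E=\cG_F$. By Proposition~\ref{prop:gamma-on-generators}, $\psi\colon\cA_{U_F}\to\Gamma(\cG_F)$ is a quasi-isomorphism from a K-projective module, so we may use $R=\cA_{U_F}$ and $r=\psi$ in the definition of $c$; this is permitted because $c$ is independent of the resolution. With these choices the first two maps send $\omega\otimes a\mapsto\omega\otimes a(1\otimes1)\mapsto\omega a\otimes1$. Hence $c_{\cG_F}$ is the composite of the multiplication map $\Omega^\bullet_{M_Q}\otimes_{b_Q^{-1}\cA}b_Q^{-1}\cA_{U_F}\to\Omega^\bullet_{M_Q}\otimes_\C\cG_F$ with the inverse of the quasi-isomorphism $e\mapsto1\otimes e$.

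Precomposing with $\phi$ gives $s\mapsto1\otimes s\mapsto1\otimes s$, which is exactly the map $\cG_F\to\Omega^\bullet_{M_Q}\otimes_\C\cG_F$, $e\mapsto1\otimes e$. Thus $c_{\cG_F}\circ\phi=\mathrm{id}$ in $D^+(M_Q)$. Since $\phi$ is an isomorphism, so is $c_{\cG_F}$. If needed, this can be confirmed stalkwise as in Proposition~\ref{prop:L-on-generators}: at $x\in V_{F,Q}$ both sides are $\C$, and at $x\notin V_{F,Q}$ both vanish.

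\emph{Main obstacle.} The delicate point is the bookkeeping of the $\cA$-module structures. We must check that $\psi$ and $\Gamma(\phi)$ are $\cA$-linear, and that the diagram for $\eta$ commutes strictly rather than only up to homotopy, so that the identities above hold in $D_\cA$. I would verify this on sections over each $U_{F'}$, where all maps are explicit. The other technical point is justifying that $\Gamma$ preserves the quasi-isomorphism $\phi$; I would deduce this from softness together with the finiteness of $\cF_\preceq$, where stalks are sections over $U_{F'}$.
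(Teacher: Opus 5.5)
Your proposal is correct and matches the paper's proof. For the unit, you take $R(\cA_{U_F})=\cA_{U_F}$ and factor $\eta$ as $b_{Q,*}(\theta)\circ\Gamma(\phi)\circ\psi$ through the quasi-isomorphisms of Propositions~\ref{prop:gamma-on-generators} and~\ref{prop:L-on-generators}; for the counit, you use $\psi$ as the K-projective resolution of $\Gamma(\cG_F)$ and show $c_{\cG_F}\circ\phi=\mathrm{id}$, as the paper does (your explicit check that $L(\cA_{U_F})$ is bounded below, needed for (R3), is a detail the paper leaves implicit).
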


\begin{proof}
Let $\gamma\colon\cA_{U_F}\to\Gamma(\cG_F)$ be the map of Proposition~\ref{prop:gamma-on-generators} and $\mu\colon\cG_F\to L(\cA_{U_F})$ the map of Proposition~\ref{prop:L-on-generators}.

Since $\cA_{U_F}$ is projective by (R1), we may take $R(\cA_{U_F})=\cA_{U_F}$ and $r=\mathrm{id}$, so that $u_{\cA_{U_F}}=b_{Q,*}(\theta)^{-1}\circ\eta$ with $\theta=\theta_{L(\cA_{U_F})}$. By (R3), $b_{Q,*}(\theta)$ is an isomorphism, so it suffices to show that $\eta\colon\cA_{U_F}\to b_{Q,*}L(\cA_{U_F})$ is a quasi-isomorphism. It factors as
\begin{equation*}
\cA_{U_F}\xrightarrow{\ \gamma\ }b_{Q,*}\big(\Omega^\bullet_{M_Q}\otimes_\C\cG_F\big)\xrightarrow{\ b_{Q,*}(\mathrm{id}\otimes\mu)\ }b_{Q,*}\big(\Omega^\bullet_{M_Q}\otimes_\C L(\cA_{U_F})\big)\xrightarrow{\ b_{Q,*}\theta\ }b_{Q,*}L(\cA_{U_F}),
\end{equation*}
as one checks on the generator $1$. The first map is a quasi-isomorphism by Proposition~\ref{prop:gamma-on-generators}. The second is a quasi-isomorphism by Proposition~\ref{prop:L-on-generators}, since $\otimes_\C$ is exact and $b_{Q,*}$ preserves quasi-isomorphisms between the soft, bounded-below complexes involved. The third is a quasi-isomorphism by (R3). This proves that $u_{\cA_{U_F}}$ is an isomorphism in $D_\cA$.

We now treat $c_{\cG_F}$. Since $\cA_{U_F}$ is projective, we may use $\gamma$ as the projective resolution of $\Gamma(\cG_F)$ in the definition of $c_{\cG_F}$, so that $L\Gamma(\cG_F)=L(\cA_{U_F})$ and $c_{\cG_F}$ is the composite
\begin{equation*}
L(\cA_{U_F})\xrightarrow{\ \mathrm{id}\otimes b_Q^{-1}\gamma\ }\Omega^\bullet_{M_Q}\otimes_{b_Q^{-1}\cA}b_Q^{-1}b_{Q,*}\big(\Omega^\bullet_{M_Q}\otimes_\C\cG_F\big)\xrightarrow{\ m\ }\Omega^\bullet_{M_Q}\otimes_\C\cG_F\xleftarrow{\ \iota\ }\cG_F,
\end{equation*}
where $\iota\colon e\mapsto1\otimes e$ is a quasi-isomorphism. Since $\gamma(1)=1\otimes1$, the composite of $\mu$ with the first two maps is $s\mapsto1\otimes s$, that is, it equals $\iota$. Hence $c_{\cG_F}\circ\mu=\mathrm{id}_{\cG_F}$ in $D^+(M_Q)$. As $\mu$ is a quasi-isomorphism, $c_{\cG_F}=\mu^{-1}$ is an isomorphism in $D^+(M_Q)$.
\end{proof}

We are now ready to prove Theorem~\ref{thm:main-theorem}.

\begin{proof}[Proof of Theorem~\ref{thm:main-theorem}]
The functors $\Gamma$ and $L$ are triangulated, and $D_{\cA,c}$ and $\mathcal K$ are full triangulated subcategories closed under isomorphisms. 

By Lemma~\ref{lem:generation-sheaf-side}, $\mathcal K$ is generated by the objects $\cG_F$, and $\Gamma(\cG_F)\cong\cA_{U_F}\in D_{\cA,c}$ by Proposition~\ref{prop:gamma-on-generators}. Hence $\Gamma(\mathcal K)\subseteq D_{\cA,c}$.

Similarly, $D_{\cA,c}$ is generated by the objects $\cA_{U_F}$ by Lemma~\ref{lem:generation-by-minimal-opens}, and $L(\cA_{U_F})\cong\cG_F\in\mathcal K$ by Proposition~\ref{prop:L-on-generators}, so $L(D_{\cA,c})\subseteq\mathcal K$. Consequently $u$ restricts to a natural transformation $\mathrm{Id}_{D_{\cA,c}}\to\Gamma L$ of triangulated endofunctors of $D_{\cA,c}$, and $c$ restricts to a natural transformation $L\Gamma\to\mathrm{Id}_{\mathcal K}$.

Let $\mathcal N\subseteq D_{\cA,c}$ be the full subcategory of objects $N$ for which $u_N$ is an isomorphism. It is closed under shifts and isomorphisms. It is also closed under cones: given a distinguished triangle $N_1\to N_2\to N_3\to$ with $N_1,N_2\in\mathcal N$, the induced morphism of triangles has two isomorphic components, hence the third is an isomorphism by the five lemma. Thus $\mathcal N$ is a full triangulated subcategory containing all $\cA_{U_F}$ by Lemma~\ref{lem:u-c-on-generators}, so $\mathcal N=D_{\cA,c}$ by Lemma~\ref{lem:generation-by-minimal-opens}. The same argument applied to $\{E\in\mathcal K : c_E\text{ is an isomorphism}\}$, using Lemmas~\ref{lem:u-c-on-generators} and~\ref{lem:generation-sheaf-side}, shows that $c$ is an isomorphism on $\mathcal K$.

Therefore $\Gamma|_{\mathcal K}\colon\mathcal K\to D_{\cA,c}$ and $L|_{D_{\cA,c}}\colon D_{\cA,c}\to\mathcal K$ satisfy $\Gamma L\cong\mathrm{Id}$ and $L\Gamma\cong\mathrm{Id}$, so they are mutually quasi-inverse equivalences. Composing with the identification $\mathcal K=D^b_{Q^n,c}(M^{4n})$ gives the equivalence in the statement.
\end{proof}

\section{Formality of $\cA$}\label{sec:formality}

We work over $\R$ and tensor with $\C$ at the end. The construction follows Lunts \cite[Section~3]{Lunts1995}, and we make explicit the steps that are left implicit there. Let
\begin{equation*}
H^{\ast}(BQ^n;\R)=\R[u_1,\dots,u_n],\quad |u_i|=4,\qquad W:=\mathrm{span}\{u_1,\dots,u_n\}\subseteq H^{\ast}(BQ^n;\R).
\end{equation*}
Fix closed forms $\omega_i\in\Omega^4(BQ^n)$ representing the generators $u_i$, let $\pi\colon M_Q\to BQ^n$ be the classifying map, and put $\Omega_i:=\pi^*\omega_i$. Denote by
\begin{equation*}
\Omega:=\mathrm{span}_\R\{\Omega_1,\dots,\Omega_n\}\subseteq\Omega^4(M_Q)
\end{equation*}
the span of these closed global $4$-forms. For a face $F$ with minimal open neighbourhood $U_F=\{G:G\supseteq F\}$, recall from Definition~\ref{def:cA} that
\begin{equation*}
\cA_F:=\cA(U_F)=\Omega^{\bullet}_{M_Q}(b_Q^{-1}U_F)=\Omega^{\bullet}_{M_Q}(V_{F,Q}),
\end{equation*}
and for $G\in U_F$ let $\rho_{GF}\colon\cA_F\to\cA_G$ denote the restriction. Since $P^n\in U_F$ for all $F$, the nonempty open stratum $V_{P^n,Q}$ lies in every $V_{F,Q}$, and we fix a base point $x_0\in V_{P^n,Q}$. A sheaf on $\cF_\preceq$ is determined by its values on the minimal opens $U_F$, and its stalk at $F$ is its value on $U_F$. Hence a morphism of sheaves is a quasi-isomorphism if and only if it is one on every $U_F$.

\subsection{Local cohomology}

The key input is the analogue of Lunts's statement that the restriction of $\Omega$ to every $V_i$ generates $H^\ast(V_i)$. In our setting the generators have degree $4$.

\begin{lemma}\label{lem:local-cohomology}
Let $F\in\cF(P^n)$ have codimension $k$, and let $H_F:=\lambda^b(F)\cong Q^k$ with inclusion $\phi\colon H_F\hookrightarrow Q^n$. Write $\phi^*:=H^\ast(B\phi)\colon H^\ast(BQ^n)\to H^\ast(BH_F)$. Then

\begin{equation} \label{eq:coh-stalk}
H^\ast(\cA_F)\cong H^\ast(BH_F;\R)=\R[u_1',\dots,u_k']
\end{equation} with $|u_j'|=4$, and under this isomorphism $[\Omega_i|_{V_{F,Q}}]=\phi^*(u_i)$ Moreover, $H^m(\cA_F)=0$ for $m=1,2,3$.
\end{lemma}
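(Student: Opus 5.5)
The plan is to obtain the isomorphism \eqref{eq:coh-stalk} from the computation already done over $\C$, then identify the classes $[\Omega_i|_{V_{F,Q}}]$ by a naturality argument.

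\textbf{The cohomology of the stalk.} First I will repeat the argument of Proposition~\ref{prop:stalks-of-cA} with real coefficients. By Lemma~\ref{lem:de-rham-properties} and the abstract de Rham theorem (Theorem~\ref{thm:abstract-de-rham}), $H^\ast(\cA_F)\cong H^\ast(V_{F,Q};\R)$. The Borel deformation retraction (Proposition~\ref{prop:borel-deformation-retract}) then gives a homotopy equivalence
\begin{equation*}
\iota_f\colon EQ^n\times_{Q^n}O_f\cong EQ^n/H_F\hookrightarrow V_{F,Q},
\end{equation*}
where $f\in\mathring F$. Hence $H^\ast(\cA_F)\cong H^\ast(BH_F;\R)$. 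Since $H_F\cong Q^k$, the Künneth formula yields $\R[u_1',\dots,u_k']$ with all generators in degree $4$. The vanishing of $H^1,H^2,H^3$ is then immediate, because this polynomial ring is concentrated in degrees divisible by $4$.

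\textbf{Identifying the classes $[\Omega_i|_{V_{F,Q}}]$.} I take the classifying map $\pi\colon M_Q\to BQ^n=EQ^n/Q^n$ to be $[e,m]\mapsto[e]$. Restricting it along $\iota_f$ gives
\begin{equation*}
EQ^n/H_F\longrightarrow EQ^n/Q^n,\qquad eH_F\mapsto eQ^n.
\end{equation*}
This is the standard model of $B\phi$, the map induced by the inclusion $\phi\colon H_F\hookrightarrow Q^n$ with $EQ^n$ serving as a free contractible $H_F$-space. Pullback of closed forms is compatible with the de Rham isomorphism, which is natural in the space. Therefore
\begin{equation*}
\iota_f^*[\Omega_i|_{V_{F,Q}}]=(B\phi)^*[\omega_i]=\phi^*(u_i).
\end{equation*}
Since $\iota_f^*$ is precisely the isomorphism used in \eqref{eq:coh-stalk}, this is the claimed identity.

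\textbf{Main obstacle.} The delicate step is this naturality on the infinite-dimensional Borel space. The de Rham model $\Omega^\bullet_{M_Q}$ is defined as a limit over the finite approximations $M_Q^N$, so I need the de Rham isomorphism to commute with pullback along maps of colimits. I would argue at each finite stage $N$, using the compatible maps $E_NQ^n/H_F\to V_{F,Q}\cap M_Q^N\to B_NQ^n$. Here the $\omega_i$ are understood as compatible families of forms on the approximations $B_NQ^n=E_NQ^n/Q^n$. I would then pass to the limit, using that in each fixed degree the cohomology stabilizes for large $N$, so no $\varprojlim^1$ terms arise. A second, minor point is that the identification must be independent of the choice of $f$ and of the retraction; this holds because any two such inclusions are homotopic inside $V_{F,Q}$.
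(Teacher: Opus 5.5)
Your proposal is correct and follows essentially the same route as the paper. Both arguments identify $H^\ast(\cA_F)$ with $H^\ast(V_{F,Q})$ via de Rham, use the Borel deformation retraction onto $EQ^n\times_{Q^n}O_f\cong EQ^n/H_F$, and observe that the classifying map restricts there to the standard model of $B\phi$. The differences are only in the fine print: you spell out the passage through the finite approximations $M_Q^N$, which the paper leaves implicit, while the paper notes that the stabiliser of the chosen point of $O_f$ is a priori only conjugate to $H_F$, which is harmless because conjugation acts trivially on $H^\ast(BQ^n)$.
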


\begin{proof}
Fix $f\in\mathring F$, so that $O_f\cong Q^n/H_F$, and let $i\colon EQ^n\times_{Q^n}O_f\cong EQ^n/H_F\hookrightarrow V_{F,Q}$ be the inclusion. By Proposition~\ref{prop:borel-deformation-retract}, $V_{F,Q}$ deformation retracts onto $EQ^n\times_{Q^n}O_f$, so $i$ is a homotopy equivalence. Since $EQ^n$ is contractible and $H_F$ acts freely on it, $EQ^n/H_F\simeq BH_F$. The classifying map $\pi[e,y]=[e]$ restricts along $i$ to the projection $EQ^n/H_F\to EQ^n/Q^n=BQ^n$, which is a model for $B\phi$. The isotropy group of a point of $O_f$ is a conjugate of $H_F$ in $Q^n$. Conjugation acts trivially on $H^\ast(BQ^n)$, so $\phi^*$ does not depend on this choice. The de Rham theorem now gives \ref{eq:coh-stalk}. Since $BH_F$ is connected and its cohomology is concentrated in degrees divisible by $4$ yields the triviality of $H^m(\cA_F)$ for  $m=1,2,3$.
\end{proof}

\begin{proposition}\label{prop:generation}
For every $F\in\cF(P^n)$ the classes $[\Omega_i|_{V_{F,Q}}]$ generate $H^\ast(V_{F,Q})$ as a ring. Equivalently, the map
\begin{equation*}
H^{\ast}(BQ^n;\R)\to H^\ast(\cA_F),\qquad u_i\mapsto[\Omega_i|_{V_{F,Q}}],
\end{equation*}
is surjective, and its kernel is the ideal $(I_F)$ generated by the linear subspace 
\begin{equation*}
    I_F:=\ker\left(H^{\ast}(BQ^n;\R) \to H^\ast(\cA_F)\right)\cap W. 
    \end{equation*}
\end{proposition}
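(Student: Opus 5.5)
By Lemma~\ref{lem:local-cohomology}, the map in the statement is identified with $\phi^*\colon H^\ast(BQ^n;\R)\to H^\ast(BH_F;\R)$, where $\phi\colon H_F=\lambda^b(F)\hookrightarrow Q^n$ is the inclusion. The whole statement therefore reduces to a question about the restriction map induced by an inclusion $Q^k\hookrightarrow Q^n$ of the type occurring in quoric manifolds. I would first determine the form of such subgroups. By acceptability of $\lambda$ near a vertex $v\in F$, and Hopkinson's description of subgroups of $Q^n$ isomorphic to $Q^k$ \cite[\S 3.1]{hopkinson2012quoric}, after conjugation (which does not change $\phi^*$) one can write $H_F$ as the image of
\begin{equation*}
(s_1,\dots,s_k)\mapsto(s_{a_1}^{\epsilon_1},\dots,s_{a_n}^{\epsilon_n}).
\end{equation*}
Here each $a_i\in[k]\cup\{\emptyset\}$ (with $s_\emptyset=1$), and each coordinate $i$ either equals one of the $s_j$ or is trivial, possibly up to an inner or antipodal automorphism of $Q$. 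Every surjection $j\mapsto$ (some coordinate carrying $s_j$) must be hit, since $H_F$ has rank $k$ and $\phi$ is injective. This description is the step I expect to be the main obstacle. If Hopkinson's classification also allows nontrivial mixing, for instance via automorphisms of $Q^n$ permuting factors, I would still only need that each projection $\mathrm{pr}_i\circ\phi\colon Q^k\to Q$ factors through a single coordinate projection $Q^k\to Q$. That holds for any homomorphism $Q^k\to Q$, because the factors $Q$ are simple and commute pairwise, so a homomorphism from $Q^k$ to $Q$ kills all but one factor.

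Granting this, I would compute $\phi^*$ on generators. For $u_i=\mathrm{pr}_i^*c_2$, naturality gives $\phi^*(u_i)=(\mathrm{pr}_i\circ\phi)^*c_2$. Since $\mathrm{pr}_i\circ\phi$ factors as $Q^k\to Q\xrightarrow{\alpha}Q$ through the $j$-th projection with $\alpha$ an endomorphism of $Q$, there are two cases. If $\alpha$ is trivial, then $\phi^*(u_i)=0$. If $\alpha$ is an automorphism, which is necessarily inner because $\mathrm{Aut}(SU(2))=\mathrm{Inn}$, then $\phi^*(u_i)=u'_j$, since conjugation acts trivially on $H^4(BQ)$. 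Injectivity of $\phi$ forces every $j\in[k]$ to occur for some $i$. Hence $\phi^*(W)$ contains all of $u'_1,\dots,u'_k$, and since $\phi^*$ is a ring map, it is surjective onto $\R[u'_1,\dots,u'_k]$. This proves the generation claim.

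For the kernel, choose a linear map splitting the surjection $W\to W':=\mathrm{span}\{u'_j\}$. Then $W=W_0\oplus I_F$, where $I_F=\ker(\phi^*|_W)$ and $\phi^*|_{W_0}$ is an isomorphism onto $W'$. This gives $H^\ast(BQ^n)=\mathrm{Sym}(W)\cong\mathrm{Sym}(W_0)\otimes\mathrm{Sym}(I_F)$. The map $\phi^*$ is the identity-like isomorphism $\mathrm{Sym}(W_0)\cong\R[u']$ on the first factor and the augmentation on the second. Its kernel is therefore $\mathrm{Sym}(W_0)\otimes\mathrm{Sym}^{+}(I_F)$, which is the ideal $(I_F)$. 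Concretely, the kernel is generated by the $u_i$ that restrict to $0$ together with the differences $u_i-u_{i'}$ of generators restricting to the same $u'_j$.
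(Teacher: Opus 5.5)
Your proof is correct and follows essentially the paper's route: identify the map with $\phi^*$ via Lemma~\ref{lem:local-cohomology}, show that each $u_i$ restricts to $0$ or to some $u'_j$ (nontrivial endomorphisms of $Q$ being inner, hence trivial on $H^\ast(BQ)$) with every $j$ attained, and deduce surjectivity and $\ker\phi^*=(I_F)$. The differences from the paper are minor and in your favour: you obtain the disjoint-support structure directly from the fact that any homomorphism $Q^k\to Q$ factors through a single projection, rather than citing Hopkinson's classification \cite[Prop.~2.1.7]{hopkinson2012quoric} (this holds because the Lie algebra of $Q$ is simple, so a factor with nontrivial image maps onto $Q$, and two commuting copies of $Q$ cannot both do so); you also justify the kernel by the splitting $W=W_0\oplus I_F$, which the paper only asserts; and the tentative $s_{a_i}^{\epsilon_i}$ description in your first paragraph is unnecessary (inversion is not a homomorphism of $Q$) and can be dropped.
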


\begin{proof}
Recall from Lemma~\ref{lem:local-cohomology} that  $\phi^*
\colon H^\ast(BQ^n)\to H^\ast(BH_F)$. By \cite[Prop.~2.1.7]{hopkinson2012quoric}, the subgroup $H_F\cong Q^k$ of $Q^n$ is of the form $Q(u^1)\cdots Q(u^k)$, where $Q(u^j)=\{(\psi_{u^j_i}(t))_{i}: t\in Q\}$, each $u^j_i$ is $0$ or a unit quaternion, and for each $i$ at most one $u^j_i$ is nonzero. Set $A_j:=\{i:u^j_i\neq0\}$. These sets are nonempty and pairwise disjoint (they are the characteristic sets of \cite[Def.~2.1.3]{hopkinson2012quoric}), and by \cite[Prop.~2.1.8]{hopkinson2012quoric} they depend only on the conjugacy class of $H_F$. Every endomorphism of $Q$ is trivial or inner \cite[Prop.~2.0.3]{hopkinson2012quoric}, and inner automorphisms act trivially on $H^\ast(BQ)$. Hence
\begin{equation*}
\phi^*(u_i)=\begin{cases}u_j' & i\in A_j,\\ 0 & i\notin\bigcup_jA_j.\end{cases}
\end{equation*}
Since every $A_j$ is nonempty, each generator $u_j'$ of $H^\ast(BH_F)$ is the image of some $u_i$, so $\phi^*$ is surjective. Its kernel is the ideal generated by the linear forms $u_i$ with $i\notin\bigcup_jA_j$ and $u_i-u_{i'}$ with $i,i'\in A_j$ for some $j$. The span of these forms is $\ker(\phi^*)\cap W=I_F$, so $\ker(\phi^*)=(I_F)$.
\end{proof}

\begin{corollary}\label{cor:cohomology-sheaf}
If $G\in U_F$ then $I_F\subseteq I_G$. The cohomology sheaf of $\cA$ is the sheaf $\cH$ with $\cH(U_F)=H^{\ast}(BQ^n;\R)/(I_F)$, zero differential, and restrictions the quotient maps

\begin{equation*}
H^{\ast}(BQ^n;\R)/(I_F)\to H^{\ast}(BQ^n;\R)/(I_G).
\end{equation*}
\end{corollary}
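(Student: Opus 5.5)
The plan is to obtain both assertions from Proposition~\ref{prop:generation} together with the naturality of the restriction maps $\rho_{GF}\colon\cA_F\to\cA_G$. The key point is that every map $H^\ast(BQ^n;\R)\to H^\ast(\cA_F)$ comes from the same global forms $\Omega_i$.

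For $G\in U_F$ we have $V_{G,Q}\subseteq V_{F,Q}$. Restriction of forms is compatible with restriction of the global forms, so $\rho_{GF}(\Omega_i|_{V_{F,Q}})=\Omega_i|_{V_{G,Q}}$. Write $\psi_F\colon H^\ast(BQ^n;\R)\to H^\ast(\cA_F)$ for the map $u_i\mapsto[\Omega_i|_{V_{F,Q}}]$; it is a ring homomorphism because the de Rham isomorphism is multiplicative. Then
\begin{equation*}
H^\ast(\rho_{GF})\circ\psi_F=\psi_G .
\end{equation*}
Hence $\ker\psi_F\subseteq\ker\psi_G$, and intersecting with $W$ gives $I_F\subseteq I_G$.

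For the description of the cohomology sheaf, recall that on the finite space $\cF_\preceq$ the cohomology sheaf $\cH^\ast(\cA)$ has value on $U_F$ equal to its stalk at $F$. Taking cohomology commutes with the stalk, which here is just evaluation on $U_F$. So $\cH^\ast(\cA)(U_F)=H^\ast(\cA(U_F))=H^\ast(\cA_F)$, and the restrictions of $\cH^\ast(\cA)$ are the maps $H^\ast(\rho_{GF})$. Proposition~\ref{prop:generation} says $\psi_F$ is surjective with kernel $(I_F)$, so it induces a ring isomorphism
\begin{equation*}
\bar\psi_F\colon H^\ast(BQ^n;\R)/(I_F)\xrightarrow{\ \sim\ }H^\ast(\cA_F).
\end{equation*}
The displayed identity shows that $H^\ast(\rho_{GF})\circ\bar\psi_F=\bar\psi_G\circ q_{GF}$, where $q_{GF}$ is the quotient map, which is well defined because $(I_F)\subseteq(I_G)$. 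Thus the $\bar\psi_F$ assemble into an isomorphism of functors on the poset, hence of sheaves on $\cF_\preceq$, between $\cH$ and $H^\ast(\cA)$. The differential on $\cH$ is zero by construction.

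No step is a serious obstacle. The only point needing care is that the identification of Lemma~\ref{lem:local-cohomology} depends on the choice of $f\in\mathring F$ and of the subgroup representative. I avoid this entirely by defining $\bar\psi_F$ directly through the global forms $\Omega_i$, so compatibility with restriction holds on the nose at the level of forms.
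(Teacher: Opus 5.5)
Your proposal is correct and follows the paper's proof. In both, the factorisation $\psi_G=H^\ast(\rho_{GF})\circ\psi_F$ through the global forms $\Omega_i$ gives $\ker\psi_F\subseteq\ker\psi_G$, and hence $I_F\subseteq I_G$. Proposition~\ref{prop:generation}, together with the identification of sections over $U_F$ with stalks on the finite space, then identifies the cohomology sheaf with $\cH$ with the quotient maps as restrictions. The only difference is that you spell out more explicitly than the paper why the isomorphisms $\bar\psi_F$ commute with restriction.
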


\begin{proof}
Since the $\Omega_i$ are global forms, the map $S\to H^\ast(\cA_G)$ is the composite of $H^{\ast}(BQ^n;\R)\to H^\ast(\cA_F)$ with $\rho_{GF}^*$. Hence
\begin{equation*}
\ker(H^{\ast}(BQ^n;\R)\to H^\ast(\cA_F))\subseteq\ker(H^{\ast}(BQ^n;\R)\to H^\ast(\cA_G)),
\end{equation*}and intersecting with $W$ gives $I_F\subseteq I_G$. The stalk of $H^\ast(\cA)$ at $F$ is
\begin{equation*}
H^\ast(\cA_F)\cong H^{\ast}(BQ^n;\R)/(I_F)
\end{equation*} 
by Proposition~\ref{prop:generation}, and the restriction maps are induced by $\rho_{GF}$. By the factorisation above they are the quotient maps.
\end{proof}

\subsection{The sheaf $\cB$ and its cohomology}

Let $E_F:=\Omega|_{V_{F,Q}}=\mathrm{span}\{\Omega_i|_{V_{F,Q}}\}\subseteq\cA_F^4$ and define a graded subspace $C_F\subseteq\cA_F$ by
\begin{gather*}
C_F^0=\{f\in\cA_F^0: f(x_0)=0\},\qquad C_F^1=\cA_F^1,\qquad C_F^2=\cA_F^2,\\
C_F^3=\{\kappa\in\cA_F^3: d\kappa\in E_F\},\qquad C_F^4=E_F,\qquad C_F^m=0\ \text{ otherwise}.
\end{gather*}
Since $dE_F=0$, $C_F$ is a subcomplex of $\cA_F$. Moreover $\rho_{GF}(C_F)\subseteq C_G$, because $\rho_{GF}$ commutes with $d$, maps $\Omega_i|_{V_{F,Q}}$ to $\Omega_i|_{V_{G,Q}}$, and $x_0\in V_{G,Q}$. Put
\begin{equation*}
\cB(U_F):=\mathrm{Sym}(C_F),
\end{equation*}
the free graded-commutative algebra on $C_F$, with the differential induced by $d$. The maps $\mathrm{Sym}(\rho_{GF}|_{C_F})$ are compatible with composition, so $\cB$ is a sheaf of commutative DG-algebras on $\cF_\preceq$.

\begin{lemma}\label{lem:cohomology-CF}
For every $F\in\cF(P^n)$ the complex $C_F$ has cohomology concentrated in degree $4$, and
\begin{equation*}
H^\ast(C_F)=E_F/dC_F^3\cong W/I_F.
\end{equation*}
These identifications are compatible with the restriction maps $C_F\to C_G$ for $G\in U_F$, which induce the quotient maps $W/I_F\to W/I_G$.
\end{lemma}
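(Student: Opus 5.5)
We compute the cohomology of $C_F$ degree by degree, using the vanishing $H^m(\cA_F)=0$ for $m=1,2,3$ and $H^0(\cA_F)=\R$ from Lemma~\ref{lem:local-cohomology}. Throughout, $\cA_F=\Omega^\bullet_{M_Q}(V_{F,Q})$ and $V_{F,Q}$ is connected.

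\emph{Degree $0$.} A cocycle in $C^0_F$ is a closed $0$-form on the connected space $V_{F,Q}$, hence a constant function $c$. The condition $f(x_0)=0$ forces $c=0$, so $H^0(C_F)=0$.

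\emph{Degree $1$.} We have $C^0_F\subset\cA^0_F$, and $\cA^0_F=C^0_F\oplus\R\cdot 1$ with $d1=0$. Hence $dC^0_F=d\cA^0_F$. Since $C^1_F=\cA^1_F$, we get $H^1(C_F)=H^1(\cA_F)=0$.

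\emph{Degree $2$.} Here $C^1_F=\cA^1_F$ and $C^2_F=\cA^2_F$. The differential out of $C^2_F$ lands in $C^3_F$, because $d\beta$ is exact and $0\in E_F$. So the cocycles are the closed $2$-forms, and $H^2(C_F)=H^2(\cA_F)=0$.

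\emph{Degree $3$.} Take $\kappa\in C^3_F$ with $d\kappa=0$ in $C^4_F=E_F$. Then $\kappa$ is a closed $3$-form, so $\kappa=d\beta$ for some $\beta\in\cA^2_F=C^2_F$. Hence $H^3(C_F)=0$.

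\emph{Degree $4$.} Every element of $E_F$ is closed and $C^5_F=0$. Also $dC^3_F=E_F\cap d\cA^3_F$: if $\kappa\in\cA^3_F$ has $d\kappa\in E_F$, then $\kappa\in C^3_F$ by definition. Therefore
\begin{equation*}
H^4(C_F)=E_F/(E_F\cap d\cA^3_F)\cong \mathrm{im}\bigl(E_F\to H^4(\cA_F)\bigr).
\end{equation*}
The map $W\to E_F\to H^4(\cA_F)$, $u_i\mapsto[\Omega_i|_{V_{F,Q}}]$, is the degree-$4$ part of the map in Proposition~\ref{prop:generation}. That map is surjective, so the image is all of $H^4(\cA_F)$. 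Its kernel on $W$ is $\ker\cap W=I_F$ by definition. Hence $H^4(C_F)\cong W/I_F$, induced by the surjection $W\to E_F$.

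\emph{Naturality.} For $G\in U_F$, the restriction $\rho_{GF}$ sends $\Omega_i|_{V_{F,Q}}$ to $\Omega_i|_{V_{G,Q}}$. So the composites $W\to E_F\to E_G$ and $W\to E_G$ agree. On $H^4$ this means the induced map $W/I_F\to W/I_G$ is the identity on representatives, i.e. the quotient map, which is well defined because $I_F\subseteq I_G$ (Corollary~\ref{cor:cohomology-sheaf}).

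\emph{Main obstacle.} The only delicate point is the degree-$4$ identification, namely matching $E_F/dC^3_F$ with the image in $H^4(\cA_F)$ and checking that the kernel is exactly $I_F$. This is where Proposition~\ref{prop:generation} enters; the remaining degrees are formal consequences of the stated vanishing.
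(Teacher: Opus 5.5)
Your proposal is correct and follows the paper's proof essentially step for step. It uses the same degree-by-degree argument from $H^0(\cA_F)=\R$ and $H^1=H^2=H^3=0$; your splitting $\cA_F^0=C_F^0\oplus\R\cdot 1$ is the paper's $f-f(x_0)$ trick. It also uses the same observation $dC_F^3=E_F\cap d\cA_F^3$, which makes $E_F/dC_F^3\to H^4(\cA_F)$ injective with kernel of $W\to E_F/dC_F^3$ equal to $I_F$, and the same naturality check via $\rho_{GF}(\Omega_i|_{V_{F,Q}})=\Omega_i|_{V_{G,Q}}$. Your remark that $W\to H^4(\cA_F)$ is onto is correct but not needed, since $H^4(C_F)\cong W/I_F$ already follows from the definition of $I_F$ as $\ker\cap W$.
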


\begin{proof}
By \eqref{eq:coh-stalk}, $H^0(\cA_F)=\R$, so the closed elements of $\cA_F^0$ are the constants. They meet $C_F^0$ trivially, hence $H^0(C_F)=0$.

Let $\alpha\in C_F^1=\cA_F^1$ be closed. Since $H^1(\cA_F)=0$ by Lemma~\ref{lem:local-cohomology}, $\alpha=df$ for some $f\in\cA_F^0$. Then $f-f(x_0)\in C_F^0$ and $d(f-f(x_0))=\alpha$, so $H^1(C_F)=0$.

Let $\alpha\in C_F^2=\cA_F^2$ be closed. Since $H^2(\cA_F)=0$ and $C_F^1=\cA_F^1$, we get $\alpha=d\beta$ with $\beta\in C_F^1$, so $H^2(C_F)=0$.

Every closed $3$-form lies in $C_F^3$, because $d\kappa=0\in E_F$. Since $H^3(\cA_F)=0$ and $C_F^2=\cA_F^2$, we get $H^3(C_F)=0$ as well.

In degree $4$ every element of $C_F^4=E_F$ is closed, so $H^4(C_F)=E_F/dC_F^3$, and $dC_F^3=E_F\cap d\cA_F^3$. Therefore $E_F/dC_F^3\to H^4(\cA_F)$ is injective, with image the span of the classes $[\Omega_i|_{V_{F,Q}}]$. The map $W\to E_F/dC_F^3$, $u_i\mapsto[\Omega_i|_{V_{F,Q}}]$, is surjective, and its kernel is $\{\ell\in W:[\ell(\Omega)]=0\in H^4(\cA_F)\}=I_F$ by Proposition~\ref{prop:generation}. This proves the first claim.

For the compatibility, $\rho_{GF}(dC_F^3)\subseteq dC_G^3$, so $\rho_{GF}$ induces a map $E_F/dC_F^3\to E_G/dC_G^3$. Under the identifications with $W/I_F$ and $W/I_G$ it sends $[u_i]$ to $[u_i]$, that is, it is the quotient map, which is well defined since $I_F\subseteq I_G$ by Corollary~\ref{cor:cohomology-sheaf}.
\end{proof}

\begin{proposition}\label{prop:cohomology-B}
For every $F\in\cF(P^n)$,
\begin{equation*}
H^\ast(\cB(U_F))\cong\mathrm{Sym}(W/I_F)=H^{\ast}(BQ^n;\R)/(I_F),
\end{equation*}
compatibly with the restriction maps. In other words, the cohomology sheaf of $\cB$ is the sheaf $\cH$ of Corollary~\ref{cor:cohomology-sheaf}.
\end{proposition}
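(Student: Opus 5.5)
The proof is a Künneth-type computation. Over $\R$, the symmetric algebra functor on (cohomologically graded) complexes commutes with taking cohomology. Combined with Lemma~\ref{lem:cohomology-CF}, this computes $H^\ast(\mathrm{Sym}(C_F))$ directly.

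Concretely, write $\mathrm{Sym}(C_F)=\bigoplus_{p\ge0}\mathrm{Sym}^p(C_F)$. Here $\mathrm{Sym}^p(C_F)=(C_F^{\otimes p})_{\Sigma_p}$ is the coinvariants of $C_F^{\otimes p}$ under the symmetric group acting with Koszul signs. The differential induced by $d$ preserves each $\mathrm{Sym}^p$, so the cohomology splits as $\bigoplus_p H^\ast(\mathrm{Sym}^p C_F)$. Over a field, the Künneth theorem gives $H^\ast(C_F^{\otimes p})\cong H^\ast(C_F)^{\otimes p}$, $\Sigma_p$-equivariantly with the Koszul sign action. Since we are in characteristic $0$, coinvariants coincide with invariants. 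They are the image of the symmetrisation idempotent $\frac1{p!}\sum\sigma$, so taking them is exact and commutes with cohomology. Hence
\[
H^\ast(\mathrm{Sym}^pC_F)\cong\mathrm{Sym}^p\big(H^\ast(C_F)\big).
\]
By Lemma~\ref{lem:cohomology-CF}, $H^\ast(C_F)\cong W/I_F$ is concentrated in degree $4$, which is even. So the graded-commutative symmetric algebra on it is the ordinary polynomial algebra $\mathrm{Sym}(W/I_F)$.

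Next I identify $\mathrm{Sym}(W/I_F)$ with $H^\ast(BQ^n;\R)/(I_F)$. The ring $H^\ast(BQ^n;\R)=\mathrm{Sym}(W)$ is polynomial on $W$, and for a linear subspace $I\subseteq W$ one has the standard isomorphism $\mathrm{Sym}(W)/(I)\cong\mathrm{Sym}(W/I)$.

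Compatibility with restrictions follows by naturality. Each map $\mathrm{Sym}(\rho_{GF}|_{C_F})$ is $\mathrm{Sym}$ applied to a chain map. The Künneth and symmetrisation isomorphisms are natural in chain maps, so the induced map on cohomology is $\mathrm{Sym}$ of the map $H^\ast(C_F)\to H^\ast(C_G)$. By Lemma~\ref{lem:cohomology-CF}, the latter is the quotient map $W/I_F\to W/I_G$. Therefore the maps on $H^\ast(\cB)$ are the quotient maps of Corollary~\ref{cor:cohomology-sheaf}, and the cohomology sheaf of $\cB$ is $\cH$.

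The main point needing care is the commutation of $\mathrm{Sym}$ with cohomology, specifically the Koszul signs. Odd-degree classes would contribute exterior powers, but since $H^\ast(C_F)$ lives only in degree $4$ no such subtlety survives. The essential input is characteristic zero, which is why the construction is carried out over $\R$ before tensoring with $\C$.
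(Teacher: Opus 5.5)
Your proposal is correct and follows the same route as the paper. The paper cites the characteristic-zero fact $H^\ast(\mathrm{Sym}\,C)\cong\mathrm{Sym}\,H^\ast(C)$, natural in $C$, from Gelfand--Manin and combines it with Lemma~\ref{lem:cohomology-CF}, the identification $\mathrm{Sym}(W)/(I_F)\cong\mathrm{Sym}(W/I_F)$ and naturality for the restriction maps, whereas you additionally prove that fact via K\"unneth and the symmetrisation idempotent.
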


\begin{proof}
Over a field of characteristic $0$, $H^\ast(\mathrm{Sym}C)\cong\mathrm{Sym}H^\ast(C)$ for every complex of vector spaces $C$ (see \cite[V.3.6, Lemma~(7)]{GeMa}), naturally in $C$. By Lemma~\ref{lem:cohomology-CF}, $H^\ast(\cB(U_F))\cong\mathrm{Sym}(W/I_F)$, and since $H^{\ast}(BQ^n;\R)=\mathrm{Sym}(W)$ this equals $H^{\ast}(BQ^n;\R)/(I_F)$. Naturality and the compatibility in Lemma~\ref{lem:cohomology-CF} show that the restriction maps are the quotient maps $H^{\ast}(BQ^n;\R)/(I_F)\to H^{\ast}(BQ^n;\R)/(I_G)$.
\end{proof}

\subsection{Formality}

\begin{theorem}\label{prop:formality}
There is a zigzag of quasi-isomorphisms of sheaves of commutative DG-algebras
\begin{equation*}
\cA\xleftarrow{\,f\,}\cB\xrightarrow{\,g\,}\cH,
\end{equation*}
where $\cH$ is the cohomology sheaf of $\cA$ with zero differential. In particular $\cA$ is formal.
\end{theorem}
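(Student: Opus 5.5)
The plan is to define both maps at the level of the generating subcomplexes $C_F$ and extend them multiplicatively via the universal property of $\mathrm{Sym}$. For $f$: the inclusion $C_F\hookrightarrow\cA_F$ is a map of complexes into a graded-commutative DG-algebra, so it extends uniquely to a DG-algebra map $f_F\colon\mathrm{Sym}(C_F)\to\cA_F$. For $g$: define $g_F\colon C_F\to\cH(U_F)=H^\ast(BQ^n;\R)/(I_F)$ by sending $C_F^m\mapsto0$ for $m\neq4$ and $C_F^4=E_F\to E_F/dC_F^3\cong W/I_F\subseteq\mathrm{Sym}(W)/(I_F)$ via Lemma~\ref{lem:cohomology-CF}. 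This is a chain map to a complex with zero differential: on $C^3_F$ we have $g_F(d\kappa)=[d\kappa]=0$ because $d\kappa\in dC_F^3$, and $g_F$ vanishes in all other degrees. Hence it extends uniquely to a DG-algebra map $g_F\colon\mathrm{Sym}(C_F)\to\cH(U_F)$.

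Next I check that these are morphisms of sheaves, i.e.\ that they commute with the restrictions $\rho_{GF}$ for $G\in U_F$. For $f$ this is immediate, since $\mathrm{Sym}(\rho_{GF}|_{C_F})$ and $\rho_{GF}$ are both algebra maps agreeing on $C_F$. For $g$ it reduces, again by uniqueness of algebra extensions, to compatibility on $C_F$, which is exactly the last assertion of Lemma~\ref{lem:cohomology-CF}: the induced map $W/I_F\to W/I_G$ is the quotient map, and by Corollary~\ref{cor:cohomology-sheaf} this is the restriction of $\cH$. Since a sheaf on $\cF_\preceq$ is determined by its values on the minimal opens, this defines morphisms of sheaves of commutative DG-algebras.

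It remains to prove that $f$ and $g$ are quasi-isomorphisms, which by the remark at the start of the section can be checked on each $U_F$. For $g_F$, I will use Proposition~\ref{prop:cohomology-B}: the map $H^\ast(g_F)$ is $\mathrm{Sym}$ of $H^\ast(C_F)\to W/I_F$, which is the isomorphism of Lemma~\ref{lem:cohomology-CF}, so $g_F$ is a quasi-isomorphism. For $f_F$, the composite $\mathrm{Sym}(W/I_F)\cong H^\ast(\cB(U_F))\to H^\ast(\cA_F)$ sends $[u_i]\mapsto[\Omega_i|_{V_{F,Q}}]$ and is multiplicative, so it coincides with the map $H^\ast(BQ^n;\R)/(I_F)\to H^\ast(\cA_F)$ induced from Proposition~\ref{prop:generation}. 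That proposition says this map is surjective with kernel exactly $(I_F)$, hence it is an isomorphism.

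The main subtle point is this identification: verifying that the abstract isomorphism $H^\ast(\mathrm{Sym}\,C)\cong\mathrm{Sym}\,H^\ast(C)$ of \cite{GeMa} is induced by the natural algebra map, so that $H^\ast(f_F)$ really is the algebra map determined by the degree-$4$ classes. This uses characteristic $0$ through symmetrization. Finally, tensoring the zigzag with $\C$ preserves quasi-isomorphisms and gives formality of the complex-valued $\cA$.
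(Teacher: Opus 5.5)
Your proposal is correct and follows the paper's proof essentially step for step. The paper uses the same $f_F$ extending $C_F\subseteq\cA_F$ and the same $g_F$; it writes $g_F$ as $\mathrm{Sym}(q_F)$ followed by $\mathrm{Sym}(W/I_F)=H^{\ast}(BQ^n;\R)/(I_F)$, which is exactly your universal-property extension. It also checks the quasi-isomorphisms on each $U_F$ via Lemma~\ref{lem:cohomology-CF} and Propositions~\ref{prop:cohomology-B} and~\ref{prop:generation}, as you do, and the point you flag about the characteristic-zero isomorphism $H^\ast(\mathrm{Sym}\,C)\cong\mathrm{Sym}\,H^\ast(C)$ being the natural multiplicative map is what the paper uses implicitly when it says $H^\ast(f_F)$ is determined by its values on the generators $W/I_F$.
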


\begin{proof}
\emph{The map $f$.} Let $f_F\colon\mathrm{Sym}(C_F)\to\cA_F$ be the algebra map extending the inclusion $C_F\subseteq\cA_F$. It is a morphism of DG-algebras and commutes with restriction, so it defines a morphism of sheaves $f\colon\cB\to\cA$. By Proposition~\ref{prop:cohomology-B}, $H^\ast(\cB(U_F))\cong H^{\ast}(BQ^n;\R)/(I_F)$. Under this identification $H^\ast(f_F)$ is the map $H^{\ast}(BQ^n;\R)/(I_F)\to H^\ast(\cA_F)$ induced by $u_i\mapsto[\Omega_i|_{V_{F,Q}}]$, because it is multiplicative and agrees with this map on the generators $W/I_F$. It is an isomorphism by Proposition~\ref{prop:generation}. So $f_F$ is a quasi-isomorphism.

\emph{The map $g$.} Let $q_F\colon C_F\to E_F/dC_F^3\cong W/I_F$ be the projection in degree $4$ and zero in the other degrees, where the target has zero differential. It is a morphism of complexes, since $d(C_F^3)=dC_F^3$ maps to $0$, and it is a quasi-isomorphism by Lemma~\ref{lem:cohomology-CF}. Let
\begin{equation*}
g_F:=\mathrm{Sym}(q_F)\colon\cB(U_F)\to\mathrm{Sym}(W/I_F)=H^{\ast}(BQ^n;\R)/(I_F)=\cH(U_F).
\end{equation*}
This is a morphism of DG-algebras. It is a quasi-isomorphism, since $\mathrm{Sym}$ preserves quasi-isomorphisms of complexes in characteristic $0$, again by \cite[V.3.6]{GeMa}. The maps $q_F$ commute with restriction by Lemma~\ref{lem:cohomology-CF}, so the $g_F$ define a morphism of sheaves $g\colon\cB\to\cH$.

Since $f_F$ and $g_F$ are quasi-isomorphisms for every $F$, so are $f$ and $g$. By Corollary~\ref{cor:cohomology-sheaf}, $\cH$ is the cohomology sheaf of $\cA$ with zero differential. Hence $\cA$ is formal. Tensoring with $\C$ gives the statement over $\C$.
\end{proof}

Since the sheaf of DG-algebras $\cA$ is formal, the equivalence of Theorem~\ref{thm:main-theorem} can be expressed purely in terms of the cohomology sheaf $H(\cA)$. 

\begin{corollary}\label{cor:formal}
Let $M^{4n}$ be a quoric manifold. Since $\cA$ is formal, there is an equivalence of triangulated categories
\begin{equation*}
D^{b}_{Q^n,c}(M^{4n}) \xrightarrow{\ \sim\ } D_{H(\cA),c},
\end{equation*}
where $H(\cA)$ is the cohomology sheaf of $\cA$, regarded as a sheaf of DG-algebras with zero differential.
\end{corollary}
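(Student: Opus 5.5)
The plan is to combine Theorem~\ref{thm:main-theorem} with the formality zigzag of Theorem~\ref{prop:formality}, using the general fact that a quasi-isomorphism of sheaves of DG-algebras induces an equivalence of derived categories of DG-modules. For $\phi\colon\cA_1\to\cA_2$ a quasi-isomorphism of sheaves of DG-algebras on $\cF_\preceq$, I would use extension of scalars $\phi^*(\cN)=\cA_2\otimes^L_{\cA_1}\cN$, computed with the K-projective resolutions of (R2), together with restriction of scalars $\phi_*$. Restriction of scalars is exact and preserves quasi-isomorphisms. The unit $\cN\to\phi_*\phi^*\cN$ and the counit are natural transformations of triangulated functors. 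I would check first that they are isomorphisms on the generators $\cA_{1,U_F}$, and then propagate this to the whole constructible subcategory by the five-lemma argument already used in the proof of Theorem~\ref{thm:main-theorem}.

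\textbf{Step 1 (generators).} For the generators one has $\phi^*(\cA_{1,U_F})=\cA_{2,U_F}$, since $\cA_{1,U_F}$ is K-projective by (R1) and tensoring it over $\cA_1$ just gives the extension by zero of $\cA_2|_{U_F}$. The unit $\cA_{1,U_F}\to\phi_*\cA_{2,U_F}$ is then the map $(j_{U_F})_!\phi|_{U_F}$. Its stalks are $\phi_G$ for $G\in U_F$ and $0$ otherwise, so it is a quasi-isomorphism. In the other direction, $\phi_*\cA_{2,U_F}$ is quasi-isomorphic to $\cA_{1,U_F}$, and the counit $\phi^*\phi_*\cA_{2,U_F}\to\cA_{2,U_F}$ is identified with the identity. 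Hence $\phi^*$ and $\phi_*$ exchange the families of generators appearing in Lemma~\ref{lem:generation-by-minimal-opens}.

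\textbf{Step 2 (propagation).} Both functors are triangulated, so $\phi^*$ and $\phi_*$ carry $D_{\cA_1,c}$ into $D_{\cA_2,c}$ and back. The full subcategories on which the unit and counit are isomorphisms are triangulated and contain the generators, so they are everything. Therefore $D_{\cA_1,c}\simeq D_{\cA_2,c}$.

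\textbf{Step 3 (apply the zigzag).} Applying this to $f\colon\cB\to\cA$ and $g\colon\cB\to\cH$ from Theorem~\ref{prop:formality}, after tensoring with $\C$, gives
\begin{equation*}
D_{\cA,c}\simeq D_{\cB,c}\simeq D_{\cH,c}.
\end{equation*}
Composing with $\Gamma\colon D^b_{Q^n,c}(M^{4n})\xrightarrow{\sim}D_{\cA,c}$ from Theorem~\ref{thm:main-theorem} yields the claim, with $H(\cA)=\cH$ identified as in Corollary~\ref{cor:cohomology-sheaf}.

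\textbf{Main obstacle.} The delicate point is making sure the resolution theory (R1)--(R2) holds for $\cB$ and $\cH$ as well as for $\cA$. This is needed both for $\phi^*$ to be well defined and for $\cB_{U_F}$ and $\cH_{U_F}$ to be K-projective. I would argue that Lunts' arguments in \cite[Section~1]{Lunts1995} apply to any sheaf of DG-algebras on a finite space, since they use only the finiteness of $\cF_\preceq$ and the adjunction $\Hom(\cA_{U},\cN)=\cN(U)$. This adjunction makes every $\cA_U$ K-projective, because $\cN\mapsto\cN(U)$ is exact on a finite space.
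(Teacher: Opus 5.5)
Your proposal is correct and follows the same route the paper takes implicitly, since the paper simply combines Theorem~\ref{thm:main-theorem} with the zigzag $\cA\leftarrow\cB\to\cH$ of Theorem~\ref{prop:formality}; you additionally spell out the invariance of the constructible derived category under quasi-isomorphisms of sheaves of DG-algebras, via extension and restriction of scalars checked on the generators $\cA_{U_F}$ and then propagated by the five-lemma argument. One correction to your justification: only the modules over minimal opens $U_F$ are K-projective in general, because there $\cN\mapsto\cN(U_F)=\cN_F$ is a stalk and hence exact, whereas for a non-minimal open $U$ the functor $\cN\mapsto\cN(U)$ is a limit and need not be exact (for the whole face poset of an interval it is a fibre product); your argument only ever uses the $U_F$, so nothing breaks.
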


\bibliography{references}

\end{document}